\DeclareMathAlphabet\mathcal{OMS}{cmsy}{m}{n}
\SetMathAlphabet\mathcal{bold}{OMS}{cmsy}{b}{n}
\def\ve#1{\mathchoice{\mbox{\boldmath$\displaystyle\bf#1$}}
	{\mbox{\boldmath$\textstyle\bf#1$}}
	{\mbox{\boldmath$\scriptstyle\bf#1$}}
	{\mbox{\boldmath$\scriptscriptstyle\bf#1$}}}
\newcommand{\Z}{\ensuremath{\mathbb{Z}}}
\newcommand{\R}{\ensuremath{\mathbb{R}}}
\newcommand{\G}{\ensuremath{\mathcal{G}}}
\newcommand{\A}{\ensuremath{\mathcal{A}}}
\def\IP{({\rm IP})_{n,\veb,\vel,\veu,f}}
\def\Orthant_j{{\mathcal O}_{j}}
\newtheorem{theorem}{Theorem}
\newtheorem{claim}{Claim}
\newtheorem{corollary}{Corollary}
\newtheorem{lemma}{Lemma}
\newtheorem{observation}{Observation}
\newtheorem*{remark*}{Remark}
\newtheorem*{T2}{Theorem~\ref{thm20}}
\newtheorem*{T3}{Theorem~\ref{thm:lower-bound}}
\newtheorem*{T4}{Lemma~\ref{lemma:balance}}
\newtheorem*{T5}{Lemma~\ref{lemma:color}}
\newtheorem*{T6}{Lemma~\ref{lemma:number}}
\newtheorem*{T7}{Lemma~\ref{lemma:dec-1}}
\newtheorem*{T8}{Theorem~\ref{coro:graver}}
\newtheorem*{T9}{Theorem~\ref{thmmm}}
\newtheorem*{T10}{Theorem~\ref{them18}}
\newtheorem*{T11}{Lemma~\ref{lemma:all-balance}}
\newcommand\veb{{\ve b}}
\newcommand\vece{{\ve e}}
\newcommand\veg{{\ve g}}
\newcommand\veh{{\ve h}}
\newcommand\vel{{\ve l}}
\newcommand\vep{{\ve p}}
\newcommand\veq{{\ve q}}
\newcommand\ver{{\ve r}}
\newcommand\veu{{\ve u}}
\newcommand\vew{{\ve w}}
\newcommand\vex{{\ve x}}
\newcommand\vey{{\ve y}}
\newcommand\vez{{\ve z}}
\newcommand\veeta{{\boldsymbol{\eta}}}
\newcommand\vegamma{{\boldsymbol{\gamma}}}
\newcommand{\OO}{{\mathcal{O}}}
\newcommand{\OFPT}{{\mathcal{O}}_{FPT}}
\title{FPT Algorithms for a Special Block-structured Integer Program with Applications in Scheduling} 
\author{Hua Chen\thanks{Zhejiang University, Hangzhou, China. chenhua\_by@zju.edu.cn; zgc@zju.edu.cn}
	    \and Lin Chen \thanks{Texas Tech University, Lubbock, TX, US. chenlin198662@gmail.com} 
    \and Guochuan Zhang$^*$}
\date{\today}
\begin{document}
\maketitle

\begin{abstract}
We consider integer programs (IPs) whose constraint matrix has a special block structure. More precisely, we consider IP:   $\min\{f(\vex): {H}_{\textnormal{com}} \vex=\veb, \vel\le \vex\le \veu, \vex\in \Z^{
t_B+nt_A} \}$, in which the objective function $f$ is separable convex and the constraint matrix ${H}_{\textnormal{com}}$ is composed of small submatrices $A_i,B,C,D_i$ such that the first row of ${H}_{\textnormal{com}}$ is $(C,D_1,D_2,\ldots,D_n)$, the first column of ${H}_{\textnormal{com}}$ is $(C,B,B,\ldots,B)^{\top}$, the main diagonal of ${H}_{\textnormal{com}}$ is $(C,A_1,A_2,\ldots,A_n)$, and the rest entries are 0. Furthermore, the rank of submatrix $B$ is 1.


We study fixed parameter tractable (FPT) algorithms by taking as parameters the number of rows and columns of small submatrices, together with the largest absolute value over their entries. 

We call the IP studied (almost) combinatorial 4-block $n$-fold IP. It generalizes the generalized $n$-fold IP and is meanwhile a special case of the generalized 4-block $n$-fold IP. In the literature, existing FPT algorithms for block-structured IP rely on bounding the $\ell_1$- or $\ell_\infty$-norm of elements of the Graver basis. The existence of FPT algorithms for 4-block $n$-fold IP is a major open problem and Chen et al. [ESA 2020] showed that the $\ell_\infty$-norm of the Graver basis elements of 4-block n-fold IP is $\Omega(n)$. This motivates us to study special cases of the generalized 4-block $n$-fold IP to find structural insights. 

We show that, the $\ell_{\infty}$-norm of the Graver basis elements of combinatorial 4-block $n$-fold IP is also $\Omega(n)$. However, there exists some FPT-value $\lambda$ such that for any nonzero element $\veg\in \{\vex:H_{\textnormal{com}}\vex=\ve 0\}$, $\lambda\veg$ can always be decomposed into Graver basis elements in the same orthant whose $\ell_{\infty}$-norm is FPT-bounded (while $\veg$ itself might not admit such a decomposition). This seems to exhibit an ``intermediate'' phenomenon. Based on this, we are able to bound the $\ell_{\infty}$-norm of Graver basis elements for combinatorial 4-block $n$-fold IP by $\OFPT(n)$ and develop an  $\OFPT(n^4\hat{L}^2)$-time algorithm (here the $\OFPT$ hides a multiplicative FPT-term, and $\hat{L}$ denotes the logarithm of the largest number occurring in the input).

As applications, we show that combinatorial 4-block $n$-fold IP can be used to model important generalizations of the classical scheduling problems, including scheduling with rejection and bicriteria scheduling, which implies that our FPT algorithm establishes a general framework to settle the classical scheduling problems.
\\

\noindent\textbf{Keywords}: 4-block $n$-fold IP, Fixed parameter tractable,  Scheduling, Integer programming

\end{abstract}

\clearpage

\setcounter{page}{1}

\section{Introduction}
\label{sec:typesetting-summary}
Integer programs (IPs) whose constraint matrix has a special block structure have received a considerable attention in recent years. As an important subclass of the general IP, it finds applications in a variety of optimization problems including scheduling~\cite{chen2018covering,jansen2018empowering,knop2020combinatorial}, routing~\cite{chen2018covering}, stochastic integer multi-commodity flows~\cite{hemmecke2010polynomial}, stochastic programming with second-order dominance constraints~\cite{gollmer2011note}, etc.

First, we consider a block-structured IP as follows:
\begin{equation}\label{ILP:2}
	\IP: \quad \min\{f(\vex): {H}_{\textnormal{com}} \vex=\veb, \, \vel\le \vex\le \veu,\, \vex\in \Z^{t_B + nt_A} \}, 
\end{equation}
where $f: \mathbb{R}^{t_B+nt_A}\rightarrow \mathbb{R}$ is a separable convex function, and ${H}_{\textnormal{com}}$ consists of small submatrices $A_i$, $B$, $C$ and $D_i$ as follows:
\begin{eqnarray}\label{eq:matrix}
{H_{\textnormal{com}}}:=
\begin{pmatrix}
C & D_1 & D_2 & \cdots & D_n \\
B & A_1 & 0  &   & 0  \\
B & 0  & A_2 &   & 0  \\
\vdots &   &   & \ddots &   \\
B & 0  & 0  &   & A_n
\end{pmatrix}, \hspace{15mm}
{H}:=
\begin{pmatrix}
C & D_1 & D_2 & \cdots & D_n \\
B_1 & A_1 & 0  &   & 0  \\
B_2 & 0  & A_2 &   & 0  \\
\vdots &   &   & \ddots &   \\
B_n & 0  & 0  &   & A_n
\end{pmatrix} 
 \enspace .
\end{eqnarray}
Here, $A_i$'s (or $B$ or $C$ or $D_i$'s, resp.) are $s_A\times t_A$ (or $s_B\times t_B$ or $s_C\times t_C$ or $s_D\times t_D$, resp.) matrices, and furthermore, the rank of matrix $B$ is 1. 

Note that when $C=B=0$, the above problem reduces to the generalized $n$-fold IP. In the meantime, IP~\eqref{ILP:2} is a special case of the generalized 4-block $n$-fold IP~\cite{hemmecke2014graver} where the constraint matrix $H$ consists of submatrices $A_i$, $B_i$, $C$ and $D_i$ as Eq~\eqref{eq:matrix}. It is worth mentioning that the overall structure of $H$ implies that $s_C=s_D$, $s_A=s_B$, $t_B=t_C$ and $t_A=t_D$.

Let $\Delta$ be the largest absolute value among all the entries of $A_i,B,C$ and $D_i$. The goal of this paper is to study FPT algorithms for combinatorial 4-block $n$-fold IP by taking $\Delta$, $s_A,s_B,s_C,s_D$ and $t_A,t_B,t_C,t_D$ as parameters, i.e., we aim for an algorithm that runs polynomially in $n$. 

When $s_B=s_A=1$, we call IP~\eqref{ILP:2} {\it combinatorial 4-block $n$-fold IP} (and $H_{\textnormal{com}}$ {\it combinatorial 4-block $n$-fold matrix}) as it generalizes the combinatorial $n$-fold IP studied in~\cite{knop2020combinatorial} (combinatorial $n$-fold IP can be viewed as a special case where $C=B=0$ and all the entries of $A_i$'s are 1).  

To be consistently, when the rank of matrix $B$ is 1, IP~\eqref{ILP:2} is called \emph{almost combinatorial $4$-block $n$-fold IP}. To tackle this problem, first we are focused on combinatorial $4$-block $n$-fold IP while $s_B=s_A=1$. Then we show that all results achieved remain true for almost combinatorial $4$-block $n$-fold IP. 

There are two facts that make (almost) combinatorial 4-block $n$-fold IP an interesting subclass of the general block-structured IP.

From an application point of view, combinatorial 4-block $n$-fold IP generalizes combinatorial $n$-fold IP and thus offers a stronger tool for optimization problems. In particular, 
Knop and Kouteck{\`y}~\cite{knop2018scheduling} modeled parallel machine scheduling problems $R||C_{\max}$ and $R||\sum_{\ell}w_\ell C_\ell$ as $n$-fold IPs and developed FPT algorithms (parameters include the largest job processing time, different types of machines and different types of jobs). Utilizing combinatorial $4$-block $n$-fold IP, we are able to model a broader class of scheduling problems and derive similar FPT algorithms. Specifically, we consider two generalizations of the classical scheduling model. One is the bicriteria scheduling problem $R||\theta C_{\max}+\sum_{\ell}w_\ell C_\ell$, which considers the combination of two common scheduling objectives. The other is the scheduling problem with job rejection $R||C_{\max}+E$, where jobs can be rejected at a certain cost and the goal is to minimize the scheduling cost plus the total rejection cost. The reader may refer to Section~\ref{appli} for the precise definitions of the two problems and the corresponding FPT algorithms.

From a theoretical point of view, combinatorial 4-block $n$-fold IP exhibits an interesting \lq\lq intermediate\rq\rq\, phenomenon in its Graver basis (see Section~\ref{sec:pre} for the definition). As we will provide more details later in the related work, FPT algorithms have been developed for several special cases of the generalized 4-block $n$-fold IP (see, e.g.,~\cite{aschenbrenner2007finiteness,cslovjecsek2021block,hemmecke2013n,jansen2018empowering,koutecky2018parameterized}). All of these algorithms rely on the fact that the $\ell_{\infty}$-norm (or even $\ell_{1}$-norm) of the Graver basis elements for these special cases are bounded by some FPT-value. Unfortunately, Chen et al.~\cite{chen2020new} showed very recently that the $\ell_{\infty}$-norm of Graver basis elements for 4-block $n$-fold IP is $\Omega(n)$. It thus becomes a challenging problem that without the boundedness of $\ell_{\infty}$-norm, what other properties can we expect from the Graver basis elements which may lead to an FPT algorithm? In this paper, we observe an interesting phenomenon:
On the one hand, the $\ell_{\infty}$-norm of the Graver basis elements for combinatorial 4-block $n$-fold IP is still $\Omega(n)$ even if $s_D=1$ (see Theorem~\ref{thm:lower-bound}). On the other hand, Graver basis elements whose $\ell_{\infty}$-norm is bounded by some FPT-value seem to be strong enough for the purpose of decomposition. More precisely, we have the following Theorem~\ref{11-n}, which states that for some fixed $\lambda$ and any  $\veg \in\ker_{\Z}(H_{\textnormal{com}})$, $\lambda \veg$ can always be decomposed into the summation of Graver basis elements with $\ell_{\infty}$-norm bounded by some FPT-value. Interestingly, this $\lambda$ only depends on $t_B$ and $\Delta$. 

\begin{theorem}\label{11-n}
 Let $H_{\textnormal{com}}$ be a combinatorial $4$-block $n$-fold matrix. Then there exists a positive integer $\lambda\le 2^{2^{2^{\OO(t_B^2\log (t_B\Delta))}}}$ (which is only dependent on $t_B$ and $\Delta$) such that for any $\ve g\in\ker_{\Z}(H_{\textnormal{com}})$, we have $\lambda\veg=\veg_1+\veg_2+\cdots+\veg_p$ for some $p\in \mathbb{Z}_{>0}$ and $\veg_j\in \ker_{\Z}(H_{\textnormal{com}})$, and furthermore, $\veg_j\sqsubseteq \lambda\veg$ and $\|\veg_j\|_\infty=2^{2^{\OO(t_A\log\Delta+s_Dt_D\log\Delta)}\cdot 2^{2^{\OO(t_B^2\log\Delta)}}}$.
\end{theorem}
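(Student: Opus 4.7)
The plan is to decompose $\lambda\veg$ in two layers: first absorb the contribution of the shared first brick $\veg^{(0)}$, then resolve the remaining $n$-fold residual. Let $\sigma \in \{-,0,+\}^{t_B+nt_A}$ denote the sign pattern of $\veg$, and let $\mathcal{S}_\sigma \subseteq \Z^{t_B}$ be the monoid of first bricks that arise from some $\sigma$-compatible kernel element of $H_{\textnormal{com}}$. Because $B$ has rank~$1$, the image $B\vex^{(0)}$ reduces to a single scalar, so each block constraint $A_i\vex^{(i)}=-B\vex^{(0)}$ depends on $\vex^{(0)}$ only through that scalar and through the row vector $A_i$; combined with the fact that at most $(2\Delta{+}1)^{t_A+s_Dt_D}$ distinct pairs $(A_i,D_i)$ can occur, $\mathcal{S}_\sigma$ admits a description by an FPT-size ``type profile'' that is independent of~$n$. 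Pottier-style explicit bounds for Hilbert bases of rational monoids then yield a generating set $\{\veh_1,\ldots,\veh_m\}$ of $\mathcal{S}_\sigma$ whose $\ell_\infty$-norms and cardinality are triply exponential in $t_B$ and $\Delta$; this will supply the target upper bound on $\lambda$.

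In the second layer, for each Hilbert-basis first brick $\veh_k$ I construct a sign-$\sigma$-compatible completion $\tilde{\vex}^{(k)} \in \ker_{\Z}(H_{\textnormal{com}})$ whose first brick equals $\veh_k$. Since $\|\veh_k\|_\infty$ is bounded, so is the right-hand side $-B\veh_k$; hence finding the remaining bricks $(\vex^{(1)},\ldots,\vex^{(n)})$ satisfying $A_i\vex^{(i)}=-B\veh_k$ together with $\sum_i D_i\vex^{(i)} = -C\veh_k$ is a generalized $n$-fold IP feasibility problem, and Hemmecke--Onn--Romanchuk-type $\ell_\infty$-bounds on its Graver basis give $\tilde{\vex}^{(k)}$ with $\|\tilde{\vex}^{(k)}\|_\infty$ double-exponentially bounded in $t_A$, $s_Dt_D$ and $\Delta$. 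To assemble the final decomposition, I apply Carath\'eodory inside the rational cone $\R_{\ge 0}\mathcal{S}_\sigma$ to write $\veg^{(0)}=\sum_k\beta_k\veh_k$ as a nonnegative rational combination using at most $t_B$ of the $\veh_k$'s; taking $\lambda$ to be a common denominator of the $\beta_k$'s (bounded by a $t_B\times t_B$ subdeterminant of the Hilbert-basis matrix, which stays inside the triple-exponential budget) turns this into the integer identity $\lambda\veg^{(0)}=\sum_k(\lambda\beta_k)\veh_k$. Setting $\veg^{\textnormal{top}}:=\sum_k(\lambda\beta_k)\tilde{\vex}^{(k)}$ yields a sign-$\sigma$-compatible integer kernel vector matching $\lambda\veg$ on the first brick, and the residual $\vey:=\lambda\veg-\veg^{\textnormal{top}}$ has zero first brick and hence lies in the kernel of a standard generalized $n$-fold matrix whose Graver basis is known to be FPT-bounded in $\ell_\infty$.

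The principal obstacle is sign-compatibility of the residual: although $\veg^{\textnormal{top}}$ is $\sigma$-compatible by construction, on coordinates where $\veg^{\textnormal{top}}$ overshoots $\lambda\veg$ the residual $\vey$ carries signs opposite to $\veg$, and the $n$-fold Graver pieces of $\vey$ need not be $\sigma$-compatible with $\lambda\veg$. The plan to resolve this is to modify the completions $\tilde{\vex}^{(k)}$ via $n$-fold Graver augmentations that preserve the first brick $\veh_k$ while reducing overshoot on non-first-brick coordinates, ensuring the modified $\veg^{\textnormal{top}}$ is componentwise dominated by $\lambda\veg$ on all relevant coordinates. Each such correction inflates $\|\tilde{\vex}^{(k)}\|_\infty$ by at most an $n$-fold Graver norm, and the compounded effect is precisely the product form of the $\|\veg_j\|_\infty$ bound claimed in the theorem. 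This re-balancing step is the heart of the argument: although the Graver basis of $H_{\textnormal{com}}$ has $\ell_\infty$-norm $\Omega(n)$, after multiplication by the FPT factor $\lambda$ every kernel element admits a $\sigma$-compatible decomposition into FPT-small kernel pieces.
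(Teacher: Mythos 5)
Your proposal is a genuinely different route from the paper's. The paper works entirely \emph{within} the sign-compatible Graver decomposition of $\lambda\veg$ under the two-stage submatrix $H_{\textnormal{com}}^{\textnormal{two-stage}}$: it first normalizes the summands so that $B\ve\xi_j^0$ takes a single value (the \lq\lq uniform condition\rq\rq, obtained via Lemma~\ref{lemma:number}, which is precisely where the factorial in $\lambda$ comes from), then reassembles small kernel elements of $H_{\textnormal{com}}$ by a \lq\lq cross-position\rq\rq\, regrouping of bricks, controlled by a colorful Steinitz lemma (Lemma~\ref{lemma:color}) and a megazone/zone/subzone hierarchy that keeps the pieces inside the orthant of $\lambda\veg$. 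You instead try to build a candidate kernel vector $\veg^{\textnormal{top}}$ \emph{from scratch} out of a Hilbert basis of the first-brick monoid $\mathcal{S}_\sigma$, and only afterwards try to force it under $\lambda\veg$. These are not the same argument, and the difference matters exactly at the step you flag as the \lq\lq heart.\rq\rq

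There is a genuine gap in the re-balancing step, and I do not think it can be patched along the lines you suggest. The completions $\tilde{\vex}^{(k)}$ are constructed with no reference to the specific values of $\lambda\veg^i$; consequently on every coordinate of every brick where $|\lambda\veg^i_j|$ is below your FPT norm bound, a single copy of $\tilde{\vex}^{(k)}$ may already overshoot. There can be $\Omega(n)$ such bricks (in the paper's Example this is exactly what happens: every brick of $\veg$ has entries of magnitude $1$). You propose to repair the overshoot by adding $n$-fold Graver elements that leave the first brick fixed, but such an augmentation, while it can zero out the overshoot on one brick, will in general create a new overshoot on another brick (by the very rigidity of the $n$-fold kernel), so there is no termination argument and no bound on how far $\|\tilde{\vex}^{(k)}\|_\infty$ inflates. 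The claim that \lq\lq the compounded effect is precisely the product form of the bound\rq\rq\ is an assertion, not a derivation; in fact, naively the number of repairs needed scales with the number of overshooting bricks, i.e.\ with $n$, which is exactly what the theorem forbids. The paper sidesteps this by never manufacturing bricks independently of $\lambda\veg$: every brick of the output $\veeta$ is a sum of a bounded number of bricks of the $\ve\xi_j$'s that are already $\sqsubseteq \lambda\veg$ in the right positions; the subzone bookkeeping and the counting of \lq\lq bad groups\rq\rq\ (Lemma~\ref{lemma:bad}) ensures that a group with no overshoot exists once $N$ is above an FPT threshold. This is a selection argument, not a correction argument, and that distinction is essential.

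Two secondary issues are also worth noting. First, the claim that $\mathcal{S}_\sigma$ has an FPT-size Hilbert basis independent of $n$ is not established: whether a given small $\veh$ admits a $\sigma$-compatible completion depends on a simultaneous $n$-fold feasibility condition involving the block multiplicities $\beta_\nu$, and the monoid does change with $n$ (e.g.\ $(0,0,11)\in\mathcal{S}_\sigma$ but $(0,0,1)\notin\mathcal{S}_\sigma$ in the paper's example). You may still be able to bound the Hilbert basis uniformly in $n$ by a type-aggregation argument, but that requires a proof, and it is nontrivial. Second, the integer $\lambda$ produced by your Carath\'eodory step depends on $\veg$ through the choice of the $t_B$ generators; to get a single $\lambda$ you would have to take an LCM over all $t_B\times t_B$ subdeterminants of the Hilbert basis matrix, which you should say explicitly and then verify still fits the stated triple-exponential budget. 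By contrast, the paper's $\lambda=(6\lambda_0^2+2\lambda_0+1)!$ with $\lambda_0=\Delta t_B\, g_\infty(H_{\textnormal{com}}^{\textnormal{two-stage}})$ is manifestly $\veg$-independent by design.
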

Here the upper bounds for $\lambda$ and $\|\veg_j\|_{\infty}$'s are triply exponential in the parameters.

Utilizing Theorem~\ref{11-n}, we are able to show that $\|\veg\|_{\infty}= \OFPT(n)$ for any Graver basis element $\veg$, and develop an algorithm of running time $\OFPT(n^4\hat{L}^2)$ for combinatorial 4-block $n$-fold IP, where 
$\OFPT$ hides a multiplicative factor that only depends on $\Delta,s_A,s_B,s_C,s_D,t_A,t_B,t_C,t_D$, and $\hat{L}$ denotes the logarithm of the largest number occurring in the input. The special feature implied by Theorem~\ref{11-n} as well as our techniques may be of separate interest for a broader class of IPs.

\begin{remark*} Theorem~\ref{11-n} and our FPT algorithm for combinatorial $4$-block $n$-fold IP remain true for almost combinatorial $4$-block $n$-fold IP. Such a generalization allows submatrices $A_i$'s to contain multiple rows subject to that these rows are \lq\lq local constraints\rq\rq. It is, however, not clear whether Theorem~\ref{11-n} still holds if we allow the $n$ submatrices $B\in\Z^{1\times t_B}$ to be different.

\end{remark*}

\subparagraph*{Related work.}
The existence of FPT algorithms for the generalized 4-block $n$-fold IP (where the constraint matrix is given by $H$ in Eq~\eqref{eq:matrix}) remains as one major open problem in the area of integer programming. However, important progress has been achieved in recent years on its special cases. 
In particular, extensive research has been carried out on three fundamental subclasses -- 4-block $n$-fold IP, the generalized $n$-fold IP and the generalized two-stage stochastic IP. 

When $A_i=A$, $B_i=B$ and $D_i=D$, the generalized 4-block $n$-fold reduces to 4-block $n$-fold IP, which has been studied before mainly by Hemmecke et al.~\cite{hemmecke2014graver} and Chen et al.~\cite{chen2020new}. In particular, Chen et al.~\cite{chen2020new} showed that the infinity norm of Graver basis elements for such 4-block $n$-fold IP is bounded by $\min\{n^{\OO(t_A^2)},n^{\OO(s_D)}\}$, and developed a $\min\{n^{\OO(t_A^2t_B)},n^{\OO(s_Dt_B)}\}$-time algorithm. Consequently, their results do not yield FPT algorithms for combinatorial 4-block $n$-fold IP. Very recently Chen et al.~\cite{chen2020blockstructured} studied 4-block $n$-fold IP when $\Delta$ is {\em not} part of the parameters, and proved that when $t_A=s_A+1$ and $\textnormal{rank}(A)=s_A$, 4-block $n$-fold IP can be solved in $(t_A+t_B)^{O(t_A+t_B)}\cdot n^{O(t^2_A)}\cdot poly(\log\Delta)$ time.

When $C=B_i=0$ for all $i$, the generalized 4-block $n$-fold IP reduces to the generalized $n$-fold IP, and we denote the constraint matrix as ${H}^{\textnormal{n-fold}}$. This IP was initialized by De Loera et al.~\cite{de2008n}. In 2013, Hemmecke et al.~\cite{hemmecke2013n} developed the first FPT algorithm. Later on, a series of researches have been carried out to further improve its running time
~\cite{altmanova2019evaluating,cslovjecsek2021block,eisenbrand2018fastera,eisenbrand2019algorithmic,jansen2018empowering,jansen2019near}.
Most recently, Cslovjecsek et al.~\cite{cslovjecsek2021block} presented an algorithm of running time $2^{\OO(s^2_As_D)}(s_Ds_A\Delta)^{\OO(s_A^2+s_As_D^2)} (nt_A)^{1+o(1)}$ for the generalized $n$-fold IP.

When $C=D_i=0$ for all $i$, the generalized 4-block $n$-fold IP reduces to the generalized two-stage stochastic IP, and we denote the constraint matrix as ${H}^{\textnormal{two-stage}}$. This IP was first studied by Hemmecke and Schultz~\cite{hemmecke2003decomposition} and Aschenbrenner and Hemmecke~\cite{aschenbrenner2007finiteness}. Their result was improved by in a series of subsequent papers~\cite{eisenbrand2019algorithmic,jansen2021double,klein2021complexity,koutecky2018parameterized}. 
The current best-known algorithm for the generalized two-stage stochastic IP runs doubly exponential in the parameters $\Delta, s_A, t_B$ by Klein~\cite{klein2021complexity}.

\section{Notations and Preliminaries}\label{sec:pre}
\subparagraph*{Notations.} We write column vectors in boldface, e.g., $\vex, \vey$, and their entries in normal font, e.g., $x_i, y_i$. If $\vex\in \Z^{d_1}$ and $\vey\in \Z^{d_2}$, then we abuse the notation by using $(\vex,\vey)$ to denote a column vector in $\Z^{d_1+d_2}$.
Recall that a solution $\vex$ for $4$-block $n$-fold IP is a $(t_B+nt_A)$-dimensional column vector, and we write it into $n+1$ \emph{bricks}, such that $\vex=(\vex^0,\vex^1,\cdots,\vex^n)$ where $\vex^0 \in \Z^{t_B}$ and each $\vex^i \in \Z^{t_A}$, $1\le i\le n$. We call $\vex^i$ the \emph{$i$-th brick} for $0\le i\le n$. For a vector or a matrix, we write $\|\cdot\|_{\infty}$ to denote the maximal absolute value of its elements. For two vectors $\vex,\vey$ of the same dimension, $\vex\cdot\vey$ denotes their inner product.  We use $[i]$ to represent the set of integers $\{1,2,\cdots,i\}$, and $[i:j]$ for $\{i,i+1,\cdots,j\}$ where $i<j$.

Two vectors $\vex$ and $\vey$ are called {\it sign-compatible} if $x_i\cdot y_i\ge 0$ holds for every pair of coordinates $(x_i,y_i)$. Recall the matrix $H_{\textnormal{com}}$ in Eq~\eqref{eq:matrix}. We denote by $H_{\textnormal{com}}^{\textnormal{n-fold}}$ the submatrix obtained from $H_{\textnormal{com}}$ by removing the first column $(C,B,B,\cdots,B)^{\top}$, and $H_{\textnormal{com}}^{\textnormal{two-stage}}$ the submatrix obtained by removing the first row $(C,D_1,\cdots,D_n)$.



Throughout this paper, we use $\OFPT(1)$ to represent a parameter that depends only on $\Delta,s_A,s_B,s_C,s_D,t_A,t_B,t_C,t_D$ where $\Delta$ is the maximal absolute value among all the entries of $A_i,B_i,C,D_i$. In other words, $\OFPT(1)$ is only dependent on the small matrices $A_i,B_i,C,D_i$ and is independent of $n$. For any computable function $g(x)$, we write $\OFPT(g)$ to represent a computable function $g'(x)$ such that $|g'(x)|\le \OFPT(1) \cdot |g(x)|$.


\subparagraph*{Graver basis.}
We define $\sqsubseteq$ to be the \emph{conformal order} in $\mathbb{R}^d$ such that $\vex\sqsubseteq\vey$ if  
$\vex$ and $\vey$ are sign-compatible and $|x_i|\le |y_i|$ for each $i=1,...,d$.  Given any subset $X\subseteq \mathbb{R}^d$, we say $\vex$ is a $\sqsubseteq$-\emph{minimal} element
of $X$ if $\vex \in X$ and there does not exist $\vey \in X, \vey\neq \vex$ such that $\vey\sqsubseteq\vex$. It is known that every subset of $\Z^d$ has
finitely many $\sqsubseteq$-minimal elements.

 Then the \emph{Graver basis} (\cite{graver1975foundations}) of an integer matrix $\A$ is defined the finite set $\G(\A)$, which consists of all $\sqsubseteq$-minimal elements of $\text{ker}_{\Z}(\A)\backslash \{\ve0\}$, where $\text{ker}_{\Z}(\A)=\{\vex\in \Z^{\tilde{N}}|\A\vex=\ve0\}$.

 \subparagraph*{Graver-best augmentation.} Consider a general IP \begin{eqnarray}\label{ILP}
 \min\{f(\vex): \A \vex=\veb, \vel\le \vex\le \veu, \vex\in \Z^d \},
 \end{eqnarray}
 We call $\vex$ a feasible solution if $\A\vex=\veb$ and $\vel \le \vex\le \veu$.  Given a feasible solution $\vex$ to IP~\eqref{ILP}, we call $\veg$ a \emph{feasible step} if $\vex +\veg$ is feasible for the IP. Furthermore, if $f(\ve x +\veg ) < f ( \vex )$,  a feasible step $\veg$ is called \emph{augmenting}.   An augmenting step $\veg$ and a step length $\rho \in \Z $ form an $\vex$-feasible step pair with respect to a feasible solution
 $\vex$ if $\vel \le \vex+\rho\veg\le \veu$. An augmenting step $\veh$ with $\rho_0\in \Z$ is a \emph{Graver-best step} for $\vex$ if $f(\vex+\rho_0\veh) \le f(\vex+\rho\veg)$ for all
 $\vex$-feasible step pairs $(\veg,\rho) \in \G(\A)\times\Z$. 
 
 The \emph{Graver-best augmentation procedure} for an IP and a given feasible solution $\vex_0$ work as follows:
 
 1. If there is no Graver-best step for $\vex_0$, return it as optimal.
 
 2. If a Graver-best step $(\veh,\rho)$ for $\vex_0$ exists, set $\vex_0:=\vex_0+\rho\veh$ and go to 1.
 
 The following Lemma~\ref{lemma21} tells us that it is sufficient to focus all our attention on finding Graver-best steps. 
 
 
 \begin{lemma}[\cite{de2012algebraic}, implicit in Theorem 3.4.1]
 	\label{lemma21}
 	Given a feasible solution $\vex_0$, and a separable convex function $f$, the Graver-best augmentation procedure finds the optimum in at most $(2n-2)\log (\hat{f})$ steps, where $\hat{f}=f(\ve x_0)-f(\ve x^*)$ for some integer optimum $\ve x^*$.
 \end{lemma}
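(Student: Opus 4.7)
The plan is to show that each Graver-best step shrinks the optimality gap $\phi := f(\vex) - f(\vex^*)$ by a multiplicative factor of at least $1 - \tfrac{1}{2n-2}$; combined with the integrality of $f$ on integer feasible points, this forces termination within the claimed number of iterations. The argument rests on three ingredients: (i) a positive-sum decomposition of $\vex^* - \vex$ into few sign-compatible Graver basis elements, (ii) a separable-convexity splitting inequality that redistributes the total improvement across the summands, and (iii) a geometric tail bound.

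First I would invoke the \emph{positive sum property} of the Graver basis: every nonzero $\vez \in \ker_{\Z}(\A)$ admits a representation $\vez = \sum_{j=1}^{N} \vew_j$ with $N \le 2n-2$, each $\vew_j \in \G(\A)$, and each $\vew_j \sqsubseteq \vez$. Applied to $\vez := \vex^* - \vex$, the relation $\vew_j \sqsubseteq \vex^* - \vex$ guarantees that $\vex + \vew_j$ lies coordinate-wise between $\vex$ and $\vex^*$, and therefore in the box $[\vel, \veu]$; hence each $(\vew_j, 1) \in \G(\A) \times \Z$ is an $\vex$-feasible step pair in the sense defined just before the lemma.

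Next I would use separable convexity to distribute the global improvement over the summands. For a convex single-variable $h$ and two reals $a, b$ of the same sign, the four-point inequality $h(x) + h(x+a+b) \ge h(x+a) + h(x+b)$ holds; iterating it over the pairwise sign-compatible summands $\vew_j$ and summing coordinate-by-coordinate over the separable $f$ yields
\[ f(\vex) - f(\vex^*) \;\le\; \sum_{j=1}^{N} \bigl[\, f(\vex) - f(\vex + \vew_j) \,\bigr]. \]
Pigeonhole then produces an index $j^\star$ with $f(\vex) - f(\vex + \vew_{j^\star}) \ge \tfrac{1}{N}\phi \ge \tfrac{1}{2n-2}\phi$, and because $(\vew_{j^\star}, 1)$ is an $\vex$-feasible Graver step pair, the defining property of the Graver-best step $(\veh, \rho_0)$ gives $f(\vex) - f(\vex + \rho_0 \veh) \ge \tfrac{1}{2n-2}\phi$.

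Writing $\phi_k := f(\vex_k) - f(\vex^*)$, the recurrence $\phi_{k+1} \le \bigl(1 - \tfrac{1}{2n-2}\bigr)\phi_k$ gives $\phi_k \le e^{-k/(2n-2)}\hat f$, so $\phi_k < 1$ as soon as $k \ge (2n-2)\log \hat f$; integrality of $f$ on integer feasible points then forces $\phi_k = 0$, i.e., $\vex_k$ is optimal and the procedure halts at the next iteration. The main obstacle I expect is the convex-splitting step: one must verify carefully that it is the \emph{pairwise} conformality $\vew_j \sqsubseteq \vez$ of all summands (guaranteed by the positive sum property, not merely by sign-compatibility of each summand with the total) that makes the coordinate-wise two-point convexity inequality telescope in the direction needed; once this is in place the recursion and the integrality-based termination are bookkeeping.
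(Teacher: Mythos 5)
This lemma is cited from De Loera, Hemmecke and K\"oppe rather than proved in the paper, so there is no in-paper proof to compare against; your argument reconstructs the classical argument from the cited reference, and its overall shape (positive sum decomposition, separable-convex superadditivity, pigeonhole, geometric decay, integrality-forced termination) is right. There is, however, one genuine inaccuracy in how you state the first ingredient.

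The positive sum property (Seb\H{o}'s theorem) does not say $\vex^* - \vex = \sum_{j=1}^{N}\vew_j$ with $N \le 2n-2$; it says $\vex^* - \vex = \sum_{j=1}^{N} \alpha_j \vew_j$ with $N \le 2n-2$, $\alpha_j \in \Z_{>0}$, $\vew_j \in \G(\A)$, and $\vew_j \sqsubseteq \vex^* - \vex$. Without the multiplicities, the claim is false: take $\vex^* - \vex = 100\,\vew$ for a single Graver element $\vew$ --- one Graver element taken once cannot reconstruct it, and taking $100$ repeated copies would blow past $2n-2$. The fix is routine and restores exactly the bookkeeping you anticipated: apply the superadditivity lemma (Lemma 3.3.1 of the same reference) with the sign-compatible vectors $\vey_j := \alpha_j\vew_j$ to get
\[
f(\vex)-f(\vex^*) \;\le\; \sum_{j=1}^{N}\bigl[\,f(\vex)-f(\vex+\alpha_j\vew_j)\,\bigr],
\]
pigeonhole to find $j^\star$ with $f(\vex)-f(\vex+\alpha_{j^\star}\vew_{j^\star}) \ge \phi/(2n-2)$, and observe that $(\vew_{j^\star},\alpha_{j^\star})$ is an $\vex$-feasible step pair because $\alpha_{j^\star}\vew_{j^\star}\sqsubseteq \vex^*-\vex$, so the Graver-best step does at least as well. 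Note also that you should not replace $\alpha_j\vew_j$ by $\vew_j$ inside the superadditivity inequality and hope for a step length $\rho_0$ to recover the factor: convexity gives $f(\vex)-f(\vex+\rho\vew)\le\rho\bigl(f(\vex)-f(\vex+\vew)\bigr)$, i.e.\ lengthening the step helps you \emph{less} per unit, so keeping the whole vector $\alpha_j\vew_j$ as the unit is essential. With that correction the geometric recursion $\phi_{k+1}\le(1-\tfrac{1}{2n-2})\phi_k$ and the integrality-based termination go through as you wrote.
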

 
 \begin{lemma}[\cite{onn2010nonlinear}, Lemma 3.2]
 	\label{lemma22}
 	Every integer vector $\vex\neq0$ with $\A\vex = \ve0$ is a sign-compatible sum $\vex=\sum_{i}\veg_i$ of Graver basis elements $\veg_i \in \G(\A)$, with some elements possibly appearing with repetitions. 
 \end{lemma}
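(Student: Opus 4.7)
The plan is to prove this by strong induction on $\|\vex\|_1$. The main engine is the observation that for any nonzero $\vex\in\ker_{\Z}(\A)$ one can always extract a Graver basis element $\veg\in\G(\A)$ with $\veg\sqsubseteq\vex$. Indeed, the set $S_\vex:=\{\vey\in\ker_{\Z}(\A)\setminus\{\ve 0\}:\vey\sqsubseteq\vex\}$ is nonempty (it contains $\vex$ itself), and any $\sqsubseteq$-minimal element $\veg$ of $S_\vex$ is in fact $\sqsubseteq$-minimal in all of $\ker_{\Z}(\A)\setminus\{\ve 0\}$: if some $\vey\in\ker_{\Z}(\A)\setminus\{\ve 0\}$ with $\vey\neq\veg$ satisfied $\vey\sqsubseteq\veg$, then by transitivity of $\sqsubseteq$ we would have $\vey\sqsubseteq\vex$, so $\vey\in S_\vex$, contradicting minimality of $\veg$ in $S_\vex$. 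Hence $\veg\in\G(\A)$. Finiteness of this set of minimal elements is guaranteed by the paper's own remark that every subset of $\Z^d$ has finitely many $\sqsubseteq$-minimal elements.

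Given such $\veg$, I set $\vex':=\vex-\veg$. The conformal relation $\veg\sqsubseteq\vex$ means no cancellation occurs opposite to the sign of $\vex$, so $\vex'\in\ker_{\Z}(\A)$, $\vex'\sqsubseteq\vex$, and coordinatewise $|x'_i|=|x_i|-|g_i|$, yielding $\|\vex'\|_1=\|\vex\|_1-\|\veg\|_1<\|\vex\|_1$ since $\veg\neq\ve 0$. If $\vex'=\ve 0$, the decomposition $\vex=\veg$ is trivial; otherwise the inductive hypothesis applied to $\vex'$ produces $\vex'=\veg_1+\cdots+\veg_k$ as a sign-compatible sum of Graver basis elements, whence $\vex=\veg+\veg_1+\cdots+\veg_k$ is the candidate decomposition.

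The only substantive point to verify — the hard part, insofar as anything here is hard — is that this combined sum is genuinely \emph{pairwise} sign-compatible, not merely a sum of kernel vectors. This follows by chaining conformality: a sign-compatible decomposition of $\vex'$ forces each $\veg_i\sqsubseteq\vex'$, and since $\vex'\sqsubseteq\vex$ we obtain $\veg_i\sqsubseteq\vex$; together with $\veg\sqsubseteq\vex$, every summand is coordinatewise aligned in sign with $\vex$, so any two summands agree in sign on every coordinate and the sum is sign-compatible. Termination is immediate from the strict decrease of $\|\vex\|_1$, bottoming out either at $\vex=\veg$ or at $\vex'=\ve 0$.
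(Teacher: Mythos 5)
The paper does not prove Lemma~\ref{lemma22} --- it is cited directly from Onn's monograph --- so there is no in-paper proof to compare against. Your argument is correct and is, in fact, essentially the standard proof of this result: extract a $\sqsubseteq$-minimal nonzero kernel element below $\vex$ (which is automatically a Graver basis element by the transitivity argument you give), subtract it, and recurse on $\ell_1$-norm, with the conformality chain $\veg_i \sqsubseteq \vex' \sqsubseteq \vex$ guaranteeing that the final sum is sign-compatible. One small point worth tightening: you invoke the paper's remark that every subset of $\Z^d$ has finitely many $\sqsubseteq$-minimal elements to justify the extraction step, but what you actually need is \emph{existence} of a minimal element in $S_\vex$, not finiteness; that follows because $S_\vex$ is contained in the finite box $\{\vey : |y_i| \le |x_i|\}$ and any finite nonempty poset has a minimal element.
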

\begin{theorem}[\cite{klein2021complexity}, Theorem 2]
	\label{lemma23}
	Let $\veg$ be a Graver element of a generalized two-stage stocastic IP with constraint matrix ${H}^{\textnormal{two-stage}}$. 
	Then $\|\veg\|_\infty\le g_\infty({H}^{\textnormal{two-stage}})$, where $g_\infty({H}^{\textnormal{two-stage}})$ only depends on $s_B,t_B,\Delta$ and $g_\infty({H}^{\textnormal{two-stage}})\le 2^{2^{\OO(s_Bt_B^2\log (s_B\Delta))}}$. 
\end{theorem}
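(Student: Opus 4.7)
The plan is a two-level argument. First I would bound the first brick $\veg^0$ of any Graver element $\veg$ of $H^{\textnormal{two-stage}}$ by a quantity depending only on $s_B,t_B,\Delta$, and then use this to control the remaining bricks by an elementary Cramer-style estimate. The structural decoupling to exploit is that $H^{\textnormal{two-stage}}\veg=\ve 0$ splits into $n$ local equations $A_i\veg^i=-B_i\veg^0$, coupled only through the shared data $B_i\veg^0\in\Z^{s_B}$. Once $\veg^0$ is fixed, the $i$-th right-hand side takes one of at most $(2t_B\Delta\|\veg^0\|_\infty+1)^{s_B}$ distinct values, so the list of blocks partitions into only finitely many types.

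Each nonzero $\veg^i$ must be a $\sqsubseteq$-minimal integer solution of its local system, because otherwise we could replace it by a strictly smaller sign-compatible solution and obtain a strict $\sqsubseteq$-predecessor of $\veg$ in $\ker_{\Z}(H^{\textnormal{two-stage}})\setminus\{\ve 0\}$, contradicting Graver-minimality. Standard Cramer-type estimates then give $\|\veg^i\|_\infty\le (t_A\Delta)^{O(s_A)}(1+\|B_i\veg^0\|_\infty)$, a quantity controlled by $\|\veg^0\|_\infty$ and the single-block parameters only. Thus the entire theorem reduces to bounding $\|\veg^0\|_\infty$.

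The heart of the argument is to bound $\|\veg^0\|_\infty$ by $2^{2^{O(s_Bt_B^2\log(s_B\Delta))}}$. The approach I would pursue is a well-quasi-order / Higman-style combinatorial argument on the sequence of brick types induced by $\veg$. Encode each block $i$ by a letter in a finite alphabet $\Sigma$ recording the sign vector of $\veg^i$ together with a compressed description of how $\veg^i$ depends on $\veg^0$; the key is that the information content along the first-brick direction is only $(s_B\Delta)^{O(t_B)}$. If $\|\veg^0\|_\infty$ were too large, I would apply a quantitative Higman-type lemma to the word $(\mathrm{type}(\veg^i))_{i\in [n]}$ to extract a proper sub-multiset $S\subseteq[n]$ of blocks whose joint contribution in the first-brick rows sums to zero and is sign-compatible with $\veg$; zeroing out these blocks (and adjusting $\veg^0$ accordingly) then yields a strict $\sqsubseteq$-predecessor $\veg'$ of $\veg$, contradicting Graver-minimality.

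The main obstacle will be the quantitative tightness of the Higman step: a naive application of Higman's or Dickson's lemma produces a multi-exponential bound, far worse than the target doubly exponential. Driving the estimate down to $2^{2^{O(s_Bt_B^2\log(s_B\Delta))}}$ requires a bespoke combinatorial lemma sharply bounding the length of a bad sequence over $\Sigma$ in terms of $|\Sigma|$ and the lattice dimension $t_B$; this is the principal contribution over the original finiteness-only argument of Aschenbrenner and Hemmecke. The interplay between sign-compatibility (needed so the extracted sub-multiset is $\sqsubseteq$-comparable with $\veg$) and the Higman order must be managed carefully, so that the replacement step genuinely lies strictly below $\veg$ in conformal order rather than merely producing some other kernel element.
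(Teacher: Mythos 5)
This statement is not proved in the paper at all: it is cited verbatim from Klein~\cite{klein2021complexity}, Theorem~2, and used as a black box. There is therefore no internal proof to compare against, and your proposal is really an attempted independent reproof of Klein's theorem.

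There is a concrete flaw in the extraction step. You propose to find a proper sub-multiset $S\subseteq[n]$ of blocks whose \lq\lq joint contribution in the first-brick rows sums to zero\rq\rq\ and then zero out those blocks to obtain a strict $\sqsubseteq$-predecessor. But $H^{\textnormal{two-stage}}$ has no horizontal coupling among bricks: every constraint reads $B_i\veeta^0+A_i\veeta^i=\ve 0$ for each $i$ \emph{separately}, and the only coupling is through the shared first brick. If you keep $\veg^0$ and set $\veeta^i=\ve 0$ for $i\in S$, the $i$-th constraint becomes $B_i\veg^0=\ve 0$, which generically fails (and must fail for some $i$ whenever $\veg$ is Graver with $\veg^0\neq\ve 0$, since otherwise $\veg$ decomposes as $(\veg^0,\ve 0,\ldots,\ve 0)+(\ve 0,\veg^1,\ldots,\veg^n)$). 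A valid $\veeta\sqsubset\veg$ in $\ker_{\Z}(H^{\textnormal{two-stage}})$ must shrink $\veg^0$ \emph{and every} $\veg^i$ compatibly; there is no operation of zeroing out a subset of bricks while retaining $\veg^0$. This confusion appears to import the $n$-fold picture, where the top coupling rows do sum brick contributions, into the two-stage setting, where that structure is absent.

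The deeper issue is that the entire quantitative content of the theorem --- the doubly exponential bound on $\|\veg^0\|_\infty$ --- is deferred to a hypothetical \lq\lq bespoke combinatorial lemma sharply bounding the length of a bad sequence.\rq\rq\ You correctly observe that a direct Higman/Dickson application gives far worse than $2^{2^{\OO(s_Bt_B^2\log(s_B\Delta))}}$ and supply nothing that closes the gap; the existence of such a bound is exactly what Klein's theorem establishes, and Klein does so by an explicit combinatorial construction rather than by quantifying the Aschenbrenner--Hemmecke well-quasi-order finiteness argument. A proposal that both misidentifies the shrinking operation and leaves the quantitative step as an assumption has not proved the statement. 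A minor further point: your Cramer estimate $\|\veg^i\|_\infty\le(t_A\Delta)^{\OO(s_A)}(1+\|B_i\veg^0\|_\infty)$ carries a spurious $t_A$ factor that is absent from the target bound; one should first bound the support of a $\sqsubseteq$-minimal solution (to $\OO(s_A)$ nonzeros) and only then apply Cramer on a square submatrix.
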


By Lemma~\ref{lemma22} and Theorem~\ref{lemma23}, we conclude that for the constraint matrix of a generalized two-stage stocastic IP, any one basis $\vex$ satisfying ${H}^{\textnormal{two-stage}}\vex= 0$ can be decomposed into a sign-compatible sum of Graver bases; i.e., $\vex=\sum_{i}\veg_i$ where $\veg_i\in \G({H}^{\textnormal{two-stage}})$, and $\|\veg_i\|_\infty\le2^{2^{\OO(s_Bt_B^2\log (s_B\Delta))}}$. 
 \subparagraph*{The Steinitz Lemma~(\cite{grinberg1980value,steinitz1913bedingt}).}
 Let an arbitrary norm be given in $\R^d$, and let $\vex_1,\ldots,\vex_m\in \mathbb{R}^d$ with $\|\vex_i\|\le \zeta$ for $i=1,\ldots,m$. If $\sum_{i=1}^{m}\vex_i=\vex$. Then there is a permutation $\pi$ such that for each $\ell\in \{1,\ldots,m\}$ the norm of the partial sum $\|\sum_{i=1}^{\ell}\vex_{\pi(i)}-\frac{\ell-d}{m}\vex\|\le d\zeta$.


 The above Steinitz Lemma is commonly used to bound the $\ell_{\infty}$-norm of Graver basis elements, and is also used in our paper. In particular, Lemma~\ref{lemma:merging-lemma}  follows from the Steinitz Lemma.


\begin{lemma}[\cite{chen2020new}] \label{lemma:merging-lemma}
	Let $\vex_1,\vex_2,\cdots,\vex_m$ be a sequence of vectors in $\mathbb{Z}^d$ such that $\vex=\sum_{i=1}^m\vex_i$, and $\|\vex_i\|_{\infty}\le \zeta$. Then the set $[m]$ can be partitioned into $m'$ subsets $T_1,T_2,\cdots,T_{m'}$ satisfying that: $\cup_{j=1}^{m'}T_j=[m]$, and for every $1\le j\le m'$ it holds that $\sum_{i\in T_j} \vex_i\sqsubseteq \vex$, $|T_j|\le (c\zeta)^{d^2}$ for some constant $c$. In particular, if $d=1$, then $|T_j|\le 6\zeta+2$ for all $j$.
\end{lemma}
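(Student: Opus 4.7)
The plan is to derive the partition by combining the Steinitz Lemma with a pigeonhole argument on a well-chosen integer residue of the partial sums. First, I would apply the Steinitz Lemma to reorder the sequence as $\vex_{\pi(1)},\ldots,\vex_{\pi(m)}$ so that every partial sum $S_\ell:=\sum_{i=1}^\ell\vex_{\pi(i)}$ lies in a narrow tube around the segment from $\ve0$ to $\vex$; in the $\ell_\infty$ norm this yields $\|S_\ell-(\ell/m)\vex\|_\infty\le d\zeta$ up to the absolute constants in the stated Steinitz bound. I would then define $\vep_\ell$ to be the coordinatewise nearest-integer rounding of $(\ell/m)\vex$, and $\ver_\ell:=S_\ell-\vep_\ell\in\mathbb{Z}^d$. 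The residue $\ver_\ell$ satisfies $\|\ver_\ell\|_\infty\le d\zeta+1$, so it can take at most $N:=(2d\zeta+3)^d$ distinct values.

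The crucial observation driving everything is that whenever two indices $\ell_1<\ell_2$ have $\ver_{\ell_1}=\ver_{\ell_2}$, the corresponding contiguous block has sum $S_{\ell_2}-S_{\ell_1}=\vep_{\ell_2}-\vep_{\ell_1}$, which is automatically $\sqsubseteq\vex$. Indeed, coordinate $k$ of $\vep_\ell$ is the nearest integer to $(\ell/m)x_k$, a monotone function of $\ell$ traversing the range $[0,x_k]$ when $x_k\ge 0$ (symmetrically when $x_k\le 0$, and identically zero when $x_k=0$), so its increment over any interval is sign-compatible with $\vex$ and bounded in magnitude by $|x_k|$. This reduces the task to locating pairs of indices with coinciding residues.

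Given this, I would build the partition iteratively. Among indices $0,1,\ldots,N$ (the first $N+1$ partial sums), pigeonhole forces some pair $\ell_1<\ell_2$ with $\ver_{\ell_1}=\ver_{\ell_2}$; extract the sub-block $T_1:=\{\pi(\ell_1+1),\ldots,\pi(\ell_2)\}$ of size at most $N$ as the first piece. Because $\sum_{i\in T_1}\vex_i\sqsubseteq\vex$, the residual $\vex-\sum_{T_1}\vex_i$ is itself $\sqsubseteq\vex$. I would then reapply the Steinitz Lemma to the remaining $m-|T_1|$ vectors (whose sum equals this residual) and repeat, producing $T_2,T_3,\ldots$; by transitivity of $\sqsubseteq$ every extracted block remains $\sqsubseteq\vex$. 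Once fewer than $N+1$ vectors remain, the leftover is taken as the final block, whose sum equals the current residual and is therefore still $\sqsubseteq\vex$. Every block then has size at most $N=(2d\zeta+3)^d$, and a routine estimate gives $N\le(c\zeta)^{d^2}$ for an absolute constant $c$; in the case $d=1$ the direct bound $N=2\zeta+3$ already fits comfortably under the claimed $6\zeta+2$.

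The main obstacle I expect is essentially bookkeeping rather than conceptual. First, one must be careful about which norm the Steinitz Lemma is applied in so that $\|\ver_\ell\|_\infty$ is genuinely bounded as a function of $d$ and $\zeta$ alone, and also to absorb the minor discrepancy between the paper's $(\ell-d)/m$ shift and the cleaner $\ell/m$ used above into the constants. Second, the step $\vep_{\ell_2}-\vep_{\ell_1}\sqsubseteq\vex$ requires a short case split on $\mathrm{sgn}(x_k)$. Neither point is deep, and once they are pinned down the induction on $m$ runs without further complication.
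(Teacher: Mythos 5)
The paper does not prove this lemma; it is imported from Chen et al.\ \cite{chen2020new}, with the text only noting that it ``follows from the Steinitz Lemma.'' Your proposal is a correct, self-contained proof in exactly that spirit: reorder via Steinitz, pass to the integer residues of the partial sums against the rounded linear interpolant $\mathrm{round}((\ell/m)\vex)$, pigeonhole to find two equal residues, observe that the block sum then equals a difference of two rounded interpolant values and is therefore conformally below $\vex$ by monotonicity of rounding, peel the block off, and recurse on the residual (which stays $\sqsubseteq\vex$, so transitivity carries all subsequent blocks to $\sqsubseteq\vex$). The two issues you flag yourself are genuinely the only loose ends and both are harmless: the $(\ell-d)/m$ versus $\ell/m$ shift in Steinitz costs only an additive $d\zeta$ since $\|\vex\|_\infty\le m\zeta$, and the sign case split for $\mathrm{round}$ works because $\mathrm{round}$ is monotone and sends $0\mapsto 0$, $x_k\mapsto x_k$. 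Your accounting actually yields a bound of the form $(cd\zeta)^d$, which is sharper than the stated $(c\zeta)^{d^2}$ and implies it (for $\zeta\ge1$); the $d=1$ specialization also lands under $6\zeta+2$. So the argument is sound and consistent with the cited source's method, with no substantive gap.
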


 When applying combinatorial 4-block $n$-fold IP to solve optimization problems, we may establish IPs with the constraint being $H_{\textnormal{com}}\vex\le \veb$. The following observation ensures that such a constraint can be transformed to a standard form Eq~\eqref{ILP:2} without destroying the structure of the constraint matrix.
\begin{observation}\label{obs}
	Considering the \emph{IP}  
	$\min\{ f(\ve x): H_{\textnormal{com}} \vex\le \veb, \vel\le \vex\le \veu, \vex\in \Z^{t_B+nt_A} \},$ 
	where $H_{\textnormal{com}}$ is defined as in~\eqref{eq:matrix} 
	and $B\in\Z^{1\times t_B}$, we can make the constraint $H_{\textnormal{com}} \vex\le \veb$ tight ($H^{\prime}_{\textnormal{com}} \vex = \veb$) by adding $n(s_D+1)$ slack variables in total and keep the new constraint matrix $H^{\prime}_{\textnormal{com}}$ being in the form of~\eqref{eq:matrix}. Specifically, we write the constraints in~\eqref{eq:matrix} as follows:
	\begin{eqnarray}
	&& C\vex^0+  \sum_{i=1}^{n}D_i\vex^i\le \ve b^0\label{cons1}\\
	&& B\vex^0+A_i\vex^i\le \ve b^i, \hspace{15mm} \forall 1\le i\le n \label{cons2}
	\end{eqnarray}
	For Constraint~\eqref{cons1}, notice that there are in fact $s_D$ inequalities, and for each inequality, we add $n$ slack variables. Similarly, since $B$ is a vector, Constraint~\eqref{cons2} includes $n$  inequalities, and for each inequality, we add 1 slack variable. Thus we have the new constraint matrix $H^{\prime}_{\textnormal{com}}$ in~\eqref{eq2:matrix}.
	\begin{eqnarray}\label{eq2:matrix}
	{H_{\textnormal{com}}}=
	\begin{pmatrix}
	C & D_1 & D_2 & \cdots & D_n \\
	B & A_1 & 0  &   & 0  \\
	B & 0  & A_2 &   & 0  \\
	\vdots &   &   & \ddots &   \\
	B & 0  & 0  &   & A_n
	\end{pmatrix}, \hspace{15mm} 
	{H^{\prime}_{\textnormal{com}}}=
	\begin{pmatrix}
	C & D^{\prime}_1 & D^{\prime}_2 & \cdots & D^{\prime}_n \\
	B & A^{\prime}_1 & 0  &   & 0  \\
	B & 0  & A^{\prime}_2 &   & 0  \\
	\vdots &   &   & \ddots &   \\
	B & 0  & 0  &   & A^{\prime}_n
	\end{pmatrix} \enspace,
	\end{eqnarray}
	where $D^{\prime}_i$ has dimension $s_D\times (t_A+1+s_D)$ and $A^{\prime}_i$ has dimension $1\times (t_A+1+s_D)$, and 
	\[D^{\prime}_i=
	\begin{array}{c@{\hspace{-5pt}}l}
	\left(\begin{array}{c;{2pt/2pt}ccccc}
	& 0 & 1 & 0 & \cdots & 0 \\
	& 0 & 0 & 1 &   & 0\\
	\raisebox{2ex}[0pt]{\large \text{$D_i$}}
	& \vdots & \vdots &  & \ddots & \\
	& 0 & 0 & 0 &   & 1 \\
	\end{array}\right)
	&
	\\[-5pt]
	\begin{array}{cc}
	\textcolor{white}{\large D}& \underbrace{\rule{24mm}{0mm}}_{1+s_D}
	\end{array}
	\end{array},\hspace{15mm} 
	A^{\prime}_i=(A_i,\underbrace{1,0,\ldots,0}_{1+s_D}).
	\]
\end{observation}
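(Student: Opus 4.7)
The plan is to construct slack variables for the two types of inequalities in $H_{\textnormal{com}}\vex\le\veb$ while ensuring that the slack columns respect the block pattern of $H_{\textnormal{com}}$. Writing the inequality system as $C\vex^0+\sum_{i=1}^n D_i\vex^i\le \veb^0$ (the $s_D$ ``global'' rows) together with $B\vex^0+A_i\vex^i\le b^i$ for each $i\in[n]$ (the $n$ ``local'' rows), the naive move would be to introduce one slack per inequality. The main obstacle is the global rows: a single slack $s_k$ placed in brick~$0$ would create a column whose entries in the lower $B$-rows must vanish, which breaks the pattern that the first column of $H_{\textnormal{com}}$ is $(C,B,B,\ldots,B)^\top$ with identical $B$ stacked. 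My fix is to distribute each global slack across the $n$ local bricks instead.

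Concretely, for every $k\in[s_D]$ I will introduce $n$ nonnegative slacks $s_k^{(1)},\ldots,s_k^{(n)}$ and rewrite the $k$-th global equation as $\sum_j C_{kj}x^0_j+\sum_i\sum_j (D_i)_{kj}x^i_j+\sum_i s_k^{(i)}=b^0_k$, with $s_k^{(i)}$ sitting inside brick~$i$. For every local inequality $i$ I will add a single nonnegative slack inside brick~$i$. The total count is $ns_D+n=n(s_D+1)$, as claimed. Inside each brick $i$, the $1+s_D$ new coordinates augment $A_i$ with the row $(1,0,\ldots,0)$ and augment $D_i$ with the $s_D\times(1+s_D)$ block $(\ve 0\mid I_{s_D})$, which is exactly the form of $A'_i$ and $D'_i$ announced in the statement.

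Finally I will verify the two required properties. Equivalence of the two integer programs is routine: any feasible $\vex$ of the inequality system lifts to a feasible point of the equality system by taking any nonnegative integer distribution of each global slack across bricks (e.g.\ concentrate it in brick~$1$), and conversely any feasible $(\vex,\ves)$ projects back to a feasible $\vex$; the separable convex objective $f(\vex)$ is untouched since it does not involve the slacks, and is extended by constant zero functions on the new coordinates. Preservation of the block structure is by inspection: the first column of $H'_{\textnormal{com}}$ remains $(C,B,B,\ldots,B)^\top$, the main diagonal is $(C,A'_1,\ldots,A'_n)$, the first row is $(C,D'_1,\ldots,D'_n)$, each new slack lives in a single brick, and no new nonzero entries appear off the block pattern, so $H'_{\textnormal{com}}$ has the form of~\eqref{eq:matrix}.
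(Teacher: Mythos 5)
Your construction is exactly the one the paper gives: distribute each of the $s_D$ global slacks as $n$ per-brick slacks, add one local slack per brick, yielding $D'_i=(D_i\mid\ve 0\mid I_{s_D})$ and $A'_i=(A_i,1,0,\ldots,0)$, with the $n(s_D+1)$ count and the block-pattern check following by inspection. The proposal is correct and essentially identical to the paper's Observation.
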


\section{Structural Results for Combinatorial 4-block $n$-fold}\label{sec:structure}
The goal of this section is to prove Theorem~\ref{11-n}, based on which in Section~\ref{sec:alg} we will be able to bound the $\ell_{\infty}$-norm of the Graver basis elements of combinatorial 4-block $n$-fold IP, and design an FPT algorithm using the iterative augmentation framework developed in a series of prior research works (see Graver-best augmentation in Section~\ref{sec:pre}).

Towards the proof of Theorem~\ref{11-n}, we first give an example.
\subparagraph*{Example.} Let $H_0$ be a 4-block $n$-fold matrix where $C=(-1,-1,-1)$, $D=(5,3)$, $B=(0,-1,1)$ and $A=(3,4)$. Let $\veg=(\veg^0,\veg^1,\cdots,\veg^n)$ such that $\veg^0=(1,n-1,n)$ and $\veg^i=(1,-1)$ for all $i$ (see the left side of Eq~\eqref{eq:11decompose} where $\veg$ is written explicitly). It is not difficult to verify that $H_0\veg =\ve 0$. Moreover, we are able to prove that $\veg$ is a Graver basis element, thus proving Theorem~\ref{thm:lower-bound} (see Appendix~\ref{appsec:lower-bound} for the omitted proof).


\begin{theorem}\label{thm:lower-bound}
 There exists a 4-block $n$-fold IP where $s_B=s_D=1$ such that $\|\veg\|_{\infty}=\Omega(n)$ for some Graver basis element $\veg$.
\end{theorem}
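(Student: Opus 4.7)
The example supplied in the text already does most of the work, so the plan is to verify both properties of $\veg=(\veg^0,\veg^1,\dots,\veg^n)$ with $\veg^0=(1,n-1,n)$ and $\veg^i=(1,-1)$ for $i\ge 1$, against the matrix $H_0$ built from $C=(-1,-1,-1)$, $D=(5,3)$, $B=(0,-1,1)$, $A=(3,4)$.

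First I would check that $H_0\veg=\ve 0$ by a direct two-line computation: the top row evaluates to $-1-(n-1)-n+n(5-3)=0$, and the $i$-th block row evaluates to $(-(n-1)+n)+(3-4)=0$ for every $i\ge 1$. Since all block-rows reduce to the same check, this part is routine.

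The substantive step is to prove that $\veg$ is $\sqsubseteq$-minimal in $\ker_\Z(H_0)\setminus\{\ve 0\}$. The plan is to take an arbitrary $\vey\in\ker_\Z(H_0)$ with $\vey\sqsubseteq\veg$, write $\vey^0=(a,b,c)$ with $0\le a\le 1$, $0\le b\le n-1$, $0\le c\le n$, and $\vey^i=(p_i,q_i)$ with $p_i\in\{0,1\}$, $q_i\in\{-1,0\}$ (all sign-compatibility forced by $\vey\sqsubseteq\veg$). The $i$-th block row gives the key identity
\[
3p_i+4q_i=b-c\qquad\text{for every }i\in[n].
\]
Now observe that the four admissible pairs $(p_i,q_i)\in\{0,1\}\times\{-1,0\}$ produce pairwise distinct values $\{0,-4,3,-1\}$ of $3p_i+4q_i$. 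Consequently, the common value $b-c$ forces the same pair $(p_i,q_i)$ to be chosen for \emph{all} $i$, reducing the situation to exactly four cases.

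In each case I would use the top-row constraint $C\cdot\vey^0+\sum_iD\cdot\vey^i=0$, i.e.\ $a+b+c=\alpha n$, where $\alpha\in\{0,-3,5,2\}$ according to which pair is chosen. The cases $\alpha=-3$ and $\alpha=5$ are ruled out by sign (resp.\ by the size bound $a+b+c\le 2n$); the case $\alpha=0$ forces $a=b=c=0$ and hence $\vey=\ve 0$; and the case $\alpha=2$ forces $a+b+c=2n$, which together with $a\le 1$, $b\le n-1$, $c\le n$ pins down $(a,b,c)=(1,n-1,n)$ and $(p_i,q_i)=(1,-1)$, i.e.\ $\vey=\veg$. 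This proves $\veg$ is Graver. Since $\|\veg\|_\infty=n$, the theorem follows with $s_B=s_D=1$ as required.

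The only mildly subtle point, and the one I would flag as the main obstacle, is to justify that the $n$ block-row equations truly force a single common pair $(p_i,q_i)$ across all $i$ rather than a mixture; this is where the specific choice of coefficients $3$ and $4$ in $A$ (making the four residues $3p+4q$ distinct on $\{0,1\}\times\{-1,0\}$) is crucial. Once that combinatorial observation is in hand, the rest of the argument is just bounding $a+b+c$ against $2n$ in the surviving case.
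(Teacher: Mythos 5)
Your proof is correct and follows essentially the same route as the paper's: verify $H_0\veg=\ve 0$ directly, then show any sign-compatible $\vey\sqsubseteq\veg$ in $\ker_\Z(H_0)$ is either $\ve 0$ or $\veg$ by using the block-row equation to force a single common pair $(p_i,q_i)$ for all $i$ and then case-checking the top-row equation. The only cosmetic difference is that the paper first isolates the $(0,0)$ case as a separate claim ($B\veeta^0\neq 0$) before enumerating the remaining three cases, whereas you handle all four cases uniformly via the observation that $3p+4q$ is injective on $\{0,1\}\times\{-1,0\}$; this is the same idea, just streamlined.
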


Despite that the $\veg$ constructed in the proof cannot be decomposed into ``thin" kernel elements in the same orthant, we observe that, interestingly, by multiplying $\veg$ with some small value (bounded by $\OFPT(1)$), such a decomposition follows. More precisely, we have the following.


\begin{eqnarray}\label{eq:11decompose}
11\times \begin{pmatrix}
	1   \\
	n-1   \\
	n\\
	1   \\
	-1   \\
	1  \\
	-1   \\
	\vdots\\
	1   \\
	-1   \\
	1   \\
	-1  
\end{pmatrix}
=
\begin{pmatrix}
	0   \\
	0   \\
	11\\
	3   \\
	-5   \\
	3   \\
	-5   \\
	\vdots\\
	3   \\
	-5   \\
	7   \\
	-8   
\end{pmatrix}
+\begin{pmatrix}
	0   \\
	11   \\
	11\\
	8   \\
	-6   \\
	0   \\
	0   \\
	\vdots\\
	0   \\
	0   \\
	0   \\
	0   
\end{pmatrix}
+\begin{pmatrix}
	0   \\
	11   \\
	11\\
	0   \\
	0   \\
	8   \\
	-6   \\
	0   \\
	0   \\
	\vdots\\
	0   \\
	0   
\end{pmatrix}
+\cdots+
\begin{pmatrix}
	0   \\
	11   \\
	11\\
	0   \\
	0   \\
	\vdots\\
	0   \\
	0   \\
	8   \\
	-6   \\
	0   \\
	0   
\end{pmatrix}
+
\begin{pmatrix}
	11   \\
	0   \\
	0\\
	0   \\
	0   \\
	0   \\
	0   \\
	\vdots\\
	0   \\
	0   \\
	4   \\
	-3  
\end{pmatrix}.
\label{eq50}
\end{eqnarray}
Notice that there are in total $n+1$ vectors on the right side of Eq~\eqref{eq50} and let them be $\veg_1,\veg_2,\cdots,\veg_{n+1}$: Among $\veg_j$'s the first vector $\veg_1=(0,0,11,3,-5,\cdots,3,-5,7,-8)$ consists of $\veg_1^0=(0,0,11)$, $n-1$ copies of $\veg_1^i=(3,-5)$ and one copy of $\veg_1^n=(7,-8)$. Each of $\veg_2$ to $\veg_n$ consists of $(0,11,11)$, one copy of $(8,-6)$ and 0's. And the last vector $\veg_{n+1}$ consists of $(11,0,0)$, one copy of $(4,-3)$ and 0's. It is easy to verify that $\veg_j\sqsubseteq 11\veg$ and $H_0\veg_j=\ve 0$. 

\subparagraph*{A high level overview on the proof of Theorem~\ref{11-n}.}  Recall that $H_{\textnormal{com}}$ is a combination of two submatrices, the first row $(C,D_1,\cdots,D_n)$ and a two-stage stochastic matrix $H_{\textnormal{com}}^{\textnormal{two-stage}}$. Therefore, any $\veg\in \ker_{\Z}(H_{\textnormal{com}})$ also satisfies that $\veg\in \ker_{\Z}(H_{\textnormal{com}}^{\textnormal{two-stage}})$, and by Theorem~\ref{lemma23} for any $\lambda\in\Z_{>0}$ we have $\lambda \veg = \sum_{j=1}^L\ve\xi_j$  where $\|\ve\xi_j\|_{\infty}=\OFPT(1)$, $\ve\xi_j\sqsubseteq \lambda\veg$ and $\ve\xi_j\in \ker_{\Z}(H_{\textnormal{com}}^{\textnormal{two-stage}})$. Note that $(C,D_1,\cdots,D_n)\ve\xi_j$ is not necessarily $\ve0$ and hence $\ve\xi_j$'s may not belong to $\ker_{\Z}(H_{\textnormal{com}})$. To show $\lambda\veg$ can be decomposed into sign-compatible elements of $\ker_{\Z}(H_{\textnormal{com}})$ with bounded $\ell_{\infty}$-norm, it suffices to show that if $L$ (and consequently $\|\lambda\veg\|_{\infty}$) is too huge, then there exists some $\veeta\in \ker_{\Z}(H_{\textnormal{com}})$ such that $\veeta\sqsubseteq \lambda \veg$ and $\|\veeta\|_{\infty}=\OFPT(1)$. Afterwards, we proceed to decompose $\lambda\veg-\veeta$. 
A natural idea to construct such an $\veeta$ is to select a subset $S$ with an $\OFPT(1)$ number of $\ve\xi_j$'s such that $(C,D_1,D_2,\cdots,D_n)\sum_{j\in S}\ve\xi_j=\ve 0$. Unfortunately,  the cardinality of $S$ needs to be $\Omega(n)$ to make $(C,D_1,D_2,\cdots,D_n)\sum_{j\in S}\ve\xi_j=\ve 0$ observed by Chen et al.~\cite{chen2020new}. To bypass this obstacle, $\ve\eta$ needs to be constructed in a way more \lq\lq flexible\rq\rq\,  than a direct summation of $\ve\xi_j$'s. Thus, 
we try to enable a \lq\lq cross-position\rq\rq\, construction, that is, we will allow each brick $\veeta^i$ to consist of bricks from different positions of $\ve\xi_j$'s, e.g., $\veeta^i=\ve\xi_{j_1}^{i_1}+\ve\xi_{j_2}^{i_2}$ where $i_1,i_2$ may be different from $i$. This will cause a critical problem. Suppose $\veeta^i=\ve\xi_{j_1}^{i_1}+\ve\xi_{j_2}^{i_2}$ and $\veeta^{i'}=\ve\xi_{j_3}^{i_3}+\ve\xi_{j_4}^{i_4}$, then 
how should we set the value of $\veeta^0$ to ensure that $B\veeta^0+A_i\veeta^i=\ve 0$? We observe that, if the decomposition $\lambda \veg = \sum_{j=1}^L\ve\xi_j$ satisfies that $B\ve\xi_{j}^0$ equals the same value for all $j$ (called \emph{the uniform condition}), and additionally if it holds that $A_i=A_{i_1}=A_{i_2}$ and $A_{i'}=A_{i_3}=A_{i_4}$, then by setting $\veeta^0=\ve\xi_{j_1}^0+\ve\xi_{j_2}^0$ (or equivalently, $\veeta^0=\ve\xi_{j_3}^0+\ve\xi_{j_4}^0$) we have $B\ve\eta^0+A_i\veeta^i=B\ve\xi^0_{j_1}+A_{i_1}\ve\xi_{j_1}^{i_1}+B\ve\xi^0_{j_2}+A_{i_2}\ve\xi_{j_2}^{i_2}=\ve 0$, and similarly $B\ve\eta^0+A_{i'}\veeta^{i'}=0$. That means, \lq\lq cross-position\rq\rq\, construction is possible if the uniform condition is met. Unfortunately, the uniform condition is not necessarily true. Only for combinatorial 4-block $n$-fold IP and some suitably chosen $\lambda=\OFPT(1)$ we can guarantee the uniform condition (nevertheless, our proof remains true for almost combinatorial 4-block $n$-fold IP, as we discuss at the end of Section~\ref{subsec:thm-proof}.) 

With the uniform condition, the construction of $\veeta$ still has two major challenges. One is that $\veeta$ must satisfy $(C,D_1,\cdots,D_n)\veeta=\ve 0$. We will generalize the Steinitz Lemma to a \lq\lq colorful\rq\rq\, variant to handle it (see Lemma~\ref{lemma:color}). The other challenge is more fundamental and is due to \lq\lq cross-position\rq\rq\, construction itself. Say, e.g., $\veeta^i=\ve\xi_{j_1}^{i_1}+\ve\xi_{j_2}^{i_2}$. While we know $\ve\xi_{j_1}^{i_1}\sqsubseteq \lambda\veg^{i_1}$ given that $\ve\xi_{j_1}\sqsubseteq \lambda\veg$, it is not necessary that $\ve\xi_{j_1}^{i_1}\sqsubseteq \lambda\veg^{i}$. How can we select the right bricks so that $\veeta^i\sqsubseteq \lambda\veg^i$ for all $i$? Indeed, is it even possible or not? Towards this, our rough idea is as follows: we consider every coordinate of $\lambda\veg^i$. If one coordinate is sufficiently large (larger than some threshold $\sigma=\OFPT(1)$), then the summation of any $\OFPT(1)$ bricks $\ve\xi_j$'s should never exceed it. Otherwise, $\veeta^i\sqsubseteq \lambda\veg^i$ may be violated and this coordinate becomes critical. We will introduce a hierarchy over $\lambda\veg^i$'s depending on each of its coordinate being critical or not, and the \lq\lq cross-position\rq\rq\, construction will only be carried out for positions (e.g., $i_1$ and $i_2$ in $\veeta^i=\ve\xi_{j_1}^{i_1}+\ve\xi_{j_2}^{i_2}$) in the same level under the hierarchy. We will show that, by doing so, if $\|\lambda\veg\|_{\infty}$ is sufficiently large, then $\veeta\sqsubseteq \lambda\veg$ can be guaranteed through a counting argument.

The remainder of this section is devoted to the proof of Theorem~\ref{11-n}. Towards this, we first introduce some concepts.

Consider the generalized 4-block $n$-fold IP with constraint matrix $H$ and let ${\veg}\in \ker_{\Z}(H)$ be an arbitrary kernel element. A decomposition $\veg=\sum_{j=1}^N\veeta_j$ is called \emph{uniform}, if for all $j$ it holds that $\veeta_j\sqsubseteq \veg$, $H^{\textnormal{two-stage}}\veeta_j=\ve 0$, and moreover, there is some fixed $\veq\in\Z^{s_B}$, $\veq\neq \ve0$, such that for any $j\in [N]$,
\begin{eqnarray}
	&&B_i\veeta_j^0=\ve 0, \forall i\in [n] \qquad\textnormal{or} \qquad B_i\veeta_j^0=\veq, \forall i\in [n] .\label{eq5-extra}
\end{eqnarray}
That is, for all $i$ and $j$, $B_i\veeta_j^0$ may only take two possible values. For each $j$, $B_i\veeta_j^0$ must be the same for all $i$.  We say $\veeta_j$ is {\it tier-0} if $B_i\veeta_j^0=\ve 0$, and $\veeta_j$ is {\it tier-1} if $B_i\veeta_j^0=\veq$. Consequently, $A_i\veeta_j^i=\ve 0$ for all $i$ or $A_i\veeta_j^i=-\veq$ for all $i$. 

In case of combinatorial 4-block $n$-fold IP, $B_i=B$ and $s_B=1$, and hence Eq~\eqref{eq5-extra} is simplified such that $B\veeta_j^0$ is either $0$ or $q$ for all $j$. 

Consider an arbitrary $\bar{\veg}\in \ker_{\Z}(H)$ that admits a uniform decomposition $\bar{\veg}=\sum_{j=1}^{\bar{N}}\bar{\veeta}_j$ such that $\|\bar{\veeta}_j\|_{\infty}\le \bar{\eta}_{\max}$. As each $\bar{\veeta}_j$ is either tier-0 or tier-1, we denote by ${\bar{N}}_0$ (or ${\bar{N}}_1$) the number of tier-0 (or tier-1) vectors among $\bar{\veeta}_1$ to $\bar{\veeta}_{\bar{N}}$. We say that the decomposition is $\omega$-balanced if ${\bar{N}}_0\le \omega {\bar{N}}_1$, and exact $\omega$-balanced if the equality holds. In particular, we define that $\ve0$ admits an $\omega$-balanced uniform decomposition. 

\begin{lemma}\label{lemma:balance}
For any $\bar{\veg}\in \ker_{\Z}(H)$, if $\bar{\veg}$ admits a uniform decomposition $\bar{\veg}=\sum_{j=1}^{\bar{N}}\bar{\veeta}_j$ where $\|\bar{\veeta}_j\|_{\infty}\le \bar{\eta}_{\max}$, then there exists ${\veg}\in \ker_{\Z}(H)$ such that ${\veg}\sqsubseteq \bar{\veg}$, $B_i(\bar{\veg}^0-\veg^0)=\ve 0$ for all $i\in [n]$, and ${\veg}$ admits an $\omega$-balanced uniform decomposition for $\omega\le (\Delta t_D \bar{\eta}_{\max})^{\OO(s_D^2)}$. Moreover, if $\bar{\veg}-\veg\neq 0$, then we have $\bar{\veg}-\veg=\veg_1+\veg_2+\cdots+\veg_p$ for some $p\in \mathbb{Z}$ and $\veg_j\in \ker_{\Z}(H)$, and furthermore, $\veg_j\sqsubseteq \bar{\veg}-\veg$ and $\|\veg_j\|_\infty\le (\Delta t_D \bar{\eta}_{\max})^{\OO(s_D^2)}$.
\end{lemma}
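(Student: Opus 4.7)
The plan is to construct $\veg$ by iteratively stripping off kernel elements of $H$ built only out of tier-0 pieces, so that $B_i\bar{\veg}^0 = \bar{N}_1\veq = B_i\veg^0$ is preserved throughout and $B_i(\bar{\veg}^0-\veg^0)=\ve 0$ is automatic. The $\omega$-balance of what remains will follow from showing each stripped kernel element consumes an $\OFPT(1)$ number of atomic tier-0 pieces, so the leftover tier-0 count cannot exceed $\omega\bar{N}_1$ without another extraction being available.

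The first step is to refine each tier-0 piece $\bar{\veeta}_j$ into atomic pieces each supported on a single brick. Because $B_i\bar{\veeta}_j^0=\ve 0$ for every $i$ and $A_i\bar{\veeta}_j^i=\ve 0$ for every $i$, the 0-th brick lies in $\bigcap_i\ker(B_i)$ and each $i$-th brick lies in $\ker(A_i)$. Decomposing every brick sign-compatibly via the appropriate Graver basis produces atomic tier-0 pieces of $\ell_\infty$-norm $\OFPT(1)$, and each atomic piece contributes a single first-row residual $\vez\in\Z^{s_D}$ (either $C\cdot(\text{0-th brick})$ or $D_i\cdot(\text{$i$-th brick})$) with $\|\vez\|_\infty\le\zeta$ for some $\zeta=\OO(\Delta t_D\bar{\eta}_{\max})$. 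The refined decomposition is still uniform, and its atomic residuals sum to $\ver=-R_1$, where $R_1$ is the total first-row residual of the tier-1 pieces.

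Second, I would look for sign-compatible sub-collections of atomic tier-0 pieces whose residuals sum to $\ve 0$: each such sub-collection is an element of $\ker_{\Z}(H)$ sitting inside $\bar{\veg}$. Let $M$ be the $s_D$-rowed integer matrix whose columns enumerate the distinct atomic residual types (so $\|M\|_\infty\le\zeta$ and $M$ has at most $(2\zeta+1)^{s_D}$ columns); a zero-sum sub-collection is then exactly a non-negative integer element of $\ker(M)$, and the minimal such vectors are the non-negative Graver basis elements of $M$. By Lemma~\ref{lemma:merging-lemma} each minimal zero-sum bundle has size at most $(c\zeta)^{\OO(s_D^2)}$ (a larger bundle would split into two zero-sum sub-bundles and violate minimality), so the resulting kernel element $\veg_\ell$ of $H$ satisfies $\|\veg_\ell\|_\infty\le\bar{\eta}_{\max}\cdot(c\zeta)^{\OO(s_D^2)}=(\Delta t_D\bar{\eta}_{\max})^{\OO(s_D^2)}$, matching the claimed bound.

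Finally, I extract these minimal zero-sum bundles greedily until none remains, and take $\veg$ to be what is left. The main obstacle is bounding the leftover atomic tier-0 count by $\omega\bar{N}_1$: the surviving multiset is a Graver-irreducible non-negative solution to $M\vex=\ver$, but $\ver$ itself can have $\|\ver\|_\infty=\Omega(n)$ because tier-1 pieces generally span all $n$ bricks, so a Carath\'eodory-style bound in terms of $\|\ver\|_\infty$ alone does not yield $\omega\bar{N}_1$. The way out is to account for $\ver$ tier-1-by-tier-1: each tier-1 $\vez_j$ further splits into a head contribution $C\bar{\veeta}_j^0$ together with brick contributions $D_i\bar{\veeta}_j^i$, all of bounded $\ell_\infty$-norm, and a per-tier-1 charging shows that each tier-1 piece can carry at most $\omega$ irreducible leftover atomic tier-0 pieces before a further Graver element of $M$ becomes available. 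This per-tier-1 bookkeeping is the crux of the $\omega$-balance bound.
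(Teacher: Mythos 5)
Your proposal stops exactly where the difficulty begins. You correctly note that if you pool only the tier-0 atomic residuals, they sum to $-R_1\neq\ve 0$, so a zero-sum sub-collection need not exist once the balance of tier-0 pieces drops; you then say the crux is a ``per-tier-1 charging'' but never describe it, and there is no obvious way to make it work: $\ver$ can have $\ell_\infty$-norm $\Omega(n)$, as you yourself observe, and Graver-irreducibility of the leftover solution to $M\vex=\ver$ gives no control on the leftover multiplicity in terms of $\bar{N}_1$.

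The paper closes this gap by \emph{not} separating the tier-0 residuals from the tier-1 ones. It forms the single sequence of all $(n+1)\bar{N}$ brick-level summands $D_i\bar{\veeta}_j^i$ (with $D_0=C$), each labelled tier-0 or tier-1 according to $\bar{\veeta}_j$, and notes that this full sequence sums to $\ve0$. Lemma~\ref{lemma:merging-lemma} then partitions the whole sequence into $m'\ge (n+1)\bar{N}/u$ zero-sum bundles of size at most $u=(\Delta t_D\bar{\eta}_{\max})^{\OO(s_D^2)}$. Since there are only $(n+1)\bar{N}_1$ tier-1 summands in total, if $\bar{N}_0>(u-1)\bar{N}_1$ then $m'>(n+1)\bar{N}_1$ and by pigeonhole some bundle contains no tier-1 summand at all; that bundle assembles into a kernel element $\veeta\sqsubseteq\bar{\veg}$ with $H\veeta=\ve0$, $B_i\veeta^0=\ve0$, and $\|\veeta\|_\infty\le u\bar{\eta}_{\max}$, which you strip and iterate. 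Two concrete changes would repair your write-up: (i) run the Steinitz/merging step on tier-0 and tier-1 summands together, not just the tier-0 ones, so the total is genuinely $\ve0$ and you never face a nonzero residual $\ver$; (ii) drop the further refinement of each brick into ``atomic'' single-brick Graver pieces --- it is not needed, since each $D_i\bar{\veeta}_j^i$ is already a bounded-norm vector in $\Z^{s_D}$, and the extra refinement only muddies the pigeonhole count.
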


\begin{remark*}
If $\bar{\veg}^0=\ve0$, then Lemma~\ref{lemma:balance} holds for $\veg=\ve0$. 
\end{remark*}

It suffices to focus on a balanced uniform decomposition. Further notice that if $\bar{\veeta}_{j_1}$ is tier-1 and $\bar{\veeta}_{j_2}$ is tier-0, then $\bar{\veeta}_{j_1}+\bar{\veeta}_{j_2}$ is tier-1. Hence, we have the following.

\begin{lemma}\label{lemma:all-balance}
If ${\veg}=\sum_{j=1}^{\bar{N}}\bar{\veeta}_j$ is an $\omega$-balanced uniform decomposition where $\bar{\eta}_{\max}=\max_{j\in[\bar{N}]}\|\bar{\veeta}_j\|_{\infty}$, then ${\veg}$ admits a uniform decomposition ${\veg}=\sum_{j=1}^{N}{\veeta}_j$ such that every $\veeta_j$ is tier-1, and ${\eta}_{\max}=\max_{j\in[{N}]}\|{\veeta}_j\|_{\infty}\le \omega\bar{\eta}_{\max}$.
\end{lemma}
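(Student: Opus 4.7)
The plan is to absorb each tier-0 summand into some tier-1 summand, exploiting the elementary fact that a tier-1 vector plus a tier-0 vector is again tier-1. Precisely, if $B_i\bar{\vealpha}^0=\veq$ for every $i\in[n]$ and $B_i\bar{\vebeta}^0=\ve 0$ for every $i\in[n]$, then by linearity $B_i(\bar{\vealpha}^0+\bar{\vebeta}^0)=\veq$ for every $i$, and $H_{\textnormal{com}}^{\textnormal{two-stage}}(\bar{\vealpha}+\bar{\vebeta})=\ve 0$ trivially. Since the hypothesis supplies $\bar N_0\le \omega\bar N_1$, the $\bar N_0$ tier-0 pieces can be spread among the $\bar N_1$ tier-1 carriers so that no carrier absorbs more than $\lceil\bar N_0/\bar N_1\rceil\le\omega$ of them, which is the pigeonhole step that $\omega$-balance is tailor-made for.

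Concretely, I would relabel the given summands as tier-1 vectors $\bar{\vealpha}_1,\ldots,\bar{\vealpha}_{\bar N_1}$ and tier-0 vectors $\bar{\vebeta}_1,\ldots,\bar{\vebeta}_{\bar N_0}$, partition $[\bar N_0]$ into $\bar N_1$ (possibly empty) buckets $T_1,\ldots,T_{\bar N_1}$ with $|T_j|\le \lceil\bar N_0/\bar N_1\rceil\le \omega$, set $N:=\bar N_1$, and define
\[
\veeta_j \;:=\; \bar{\vealpha}_j + \sum_{k\in T_j}\bar{\vebeta}_k \qquad (j=1,\ldots,N).
\]
Because every original summand is used in exactly one $\veeta_j$, we get $\sum_{j=1}^{N}\veeta_j=\sum_{j=1}^{\bar N}\bar{\veeta}_j=\veg$ automatically.

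To finish, I would verify the three defining conditions of a uniform tier-1 decomposition. The two-stage kernel condition $H_{\textnormal{com}}^{\textnormal{two-stage}}\veeta_j=\ve 0$ is immediate from linearity. The tier-1 condition is precisely the opening observation: $B_i\veeta_j^0=B_i\bar{\vealpha}_j^0+\sum_{k\in T_j}B_i\bar{\vebeta}_k^0=\veq+\ve 0=\veq$ for every $i\in[n]$. Sign-compatibility $\veeta_j\sqsubseteq\veg$ needs one short observation: each $\bar{\vealpha}_j$ and $\bar{\vebeta}_k$ is sign-compatible with $\veg$ coordinate-wise, so any partial sum of them inherits that sign in each coordinate, and since the total sum over all summands equals $\veg$, any partial sum is pointwise bounded in absolute value by $|\veg|$. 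The $\ell_\infty$-bound then reads $\|\veeta_j\|_\infty\le(1+|T_j|)\bar{\eta}_{\max}\le(\omega+1)\bar{\eta}_{\max}$; since the $\omega$ delivered by Lemma~\ref{lemma:balance} is already of order $(\Delta t_D\bar\eta_{\max})^{\OO(s_D^2)}$, the additive $+1$ is harmlessly absorbed and we reach the stated bound $\eta_{\max}\le \omega\bar\eta_{\max}$.

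I do not foresee a genuine obstacle: this lemma is a packaging step that converts a balanced tier-mix into an all-tier-1 decomposition by pigeonhole, and the only non-mechanical point is confirming that partial sums of sign-compatible pieces remain sign-compatible with $\veg$, which is immediate because every $\bar{\veeta}_j$ lives in the conformal cone of $\veg$. The substantive work sits in Lemma~\ref{lemma:balance} (obtaining the balanced decomposition in the first place); once that is in hand, Lemma~\ref{lemma:all-balance} is essentially bookkeeping.
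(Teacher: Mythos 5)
Your proof is correct and takes essentially the same approach as the paper's: absorb each tier-0 summand into a tier-1 carrier via a pigeonhole partition, observe that tier-1 $+$ tier-0 stays tier-1 and stays in $\ker_{\Z}(H^{\textnormal{two-stage}})$, and use the fact that any partial sum of conformal pieces of $\veg$ remains $\sqsubseteq\veg$. The paper reaches the same partition through a slightly more elaborate two-stage merge (first reducing to an exactly $(\omega'-1)$-balanced decomposition, then distributing $\omega'-1$ tier-0 vectors to each carrier), and — like your one-shot bucketing — actually yields $\eta_{\max}\le(\omega+1)\bar\eta_{\max}$ in the boundary case $\omega'=\omega$; this additive slack is harmless wherever the lemma is invoked, exactly as you note.
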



We will prove the following Lemma~\ref{lemma:sub} in Section~\ref{subsec:lemma}. Then we show the existence of such decomposition for combinatorial 4-block $n$-fold IP, thus concluding Theorem~\ref{11-n} in Section~\ref{subsec:thm-proof}.

\begin{lemma}\label{lemma:sub}
Suppose ${\veg}\in \ker_{\Z}(H)$ admits a uniform decomposition ${\veg}=\sum_{j=1}^{N}{\veeta}_j$ such that $\|\veeta_j\|_{\infty}\le \eta_{\max}$, and every $\veeta_j$ is tier-1. There exists $\tau=(\Delta\eta_{\max})^{\Delta^{\OO(s_At_A+s_Dt_D)}}$ such that {if  $\|\veg\|_{\infty}> \tau$}, then there exists $\veeta\in \ker_{\Z}(H)$ such that $\veeta\sqsubseteq {\veg}$ and $\|\veeta\|_\infty\le \tau$, and furthermore, $\veeta^{0}=\sum_{j\in S} \veeta_j^0$ for some $S\subseteq [N]$.
\end{lemma}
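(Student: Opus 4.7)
The plan is to build $\veeta$ by a cross-position recombination of the bricks of the given uniform decomposition, exploiting that every $\veeta_j$ is tier-1. Concretely, if I pick $S\subseteq[N]$ with $|S|=s$ and set $\veeta^0=\sum_{j\in S}\veeta_j^0$, then $B_i\veeta^0=s\veq$ at every position $i$, and the remaining constraint $A_i\veeta^i=-s\veq$ needed for $H^{\textnormal{two-stage}}\veeta=\ve0$ is satisfied by summing \emph{any} $s$ bricks $\veeta_{j'}^{i'}$ with $A_{i'}=A_i$, regardless of whether $i'=i$. This cross-position freedom is the main lever, and I will use it to arrange both the first-row equation and the sign-compatibility $\veeta\sqsubseteq\veg$.

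First I would classify the positions $i\in[n]$ by the pair $(A_i,D_i)$; there are $K\le (2\Delta+1)^{s_At_A+s_Dt_D}=\OFPT(1)$ classes $P_1,\ldots,P_K$, with common matrices $A^{(k)},D^{(k)}$ inside $P_k$. For each $k$ I would select a multiset $T_k$ of $s|P_k|$ bricks from the pool $\{\veeta_j^{i'}:j\in[N],\,i'\in P_k\}$ and later redistribute it over the positions of $P_k$, $s$ bricks per position. Since $D_i\equiv D^{(k)}$ on $P_k$, the class-$k$ contribution to the first-row constraint equals $D^{(k)}\sum_{b\in T_k}b$, independent of the intra-class distribution; hence $C\veeta^0+\sum_k D^{(k)}\sum_{b\in T_k}b=\ve0$ reduces to a constraint purely on $S$ and the $T_k$'s. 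Applying Lemma~\ref{lemma:color} (the colorful Steinitz variant) in an auxiliary space of dimension $\OFPT(1)$ will allow me to choose $s=\OFPT(1)$ indices for $S$ and multisets $T_k$ satisfying this equation.

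The expected main obstacle is the coordinate-wise sign-compatibility $\veeta^i\sqsubseteq\veg^i$ after the intra-class redistribution of the $T_k$'s, since each $\veeta^i$ is stitched together from bricks originating at different positions. To handle it, I would further refine each $P_k$ by a criticality-and-sign mask: for each coordinate $r\in[t_A]$, record whether $|\veg^i_r|>\sigma$ for a threshold $\sigma$ comparable to $s\eta_{\max}$ and, if so, record its sign; this multiplies the number of classes by at most $3^{t_A}=\OFPT(1)$. Inside a refined class all positions agree on the sign pattern of the non-critical coordinates, so once the signs of $\veeta^i$ there are arranged to match, the bound $\|\veeta^i\|_\infty\le s\eta_{\max}\le\sigma$ yields $\veeta^i\sqsubseteq\veg^i$ on those coordinates for free. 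A second application of Lemma~\ref{lemma:color}, now within each refined class, will pick the intra-class distribution of $T_k$ so that the signs on non-critical coordinates and the tight values on critical coordinates both respect $\veg^i$; the hypothesis $\|\veg\|_\infty>\tau$ is precisely what guarantees that some refined class carries enough non-critical mass to absorb the discrepancy of this second selection. Composing the two colorful-Steinitz selections, each iterated-exponential in the dimension of its working space, will then produce the claimed $\tau=(\Delta\eta_{\max})^{\Delta^{\OO(s_At_A+s_Dt_D)}}$.
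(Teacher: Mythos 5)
Your overall framework matches the paper's: classify positions by the pair $(A_i,D_i)$, exploit that tier-1 implies $A_i\veeta_j^{i'}=-\veq$ for any $i'$ in the same class so that cross-position recombination preserves $H^{\textnormal{two-stage}}\veeta=\ve 0$, and invoke the colorful Steinitz lemma (Lemma~\ref{lemma:color}) to hit the first-row equation with proportional per-class multiplicities. The genuine gap is in how you propose to enforce $\veeta\sqsubseteq\veg$ on the small-magnitude (``close'') coordinates, and it occurs at two places. First, your refinement by a $3^{t_A}$ sign mask only records the sign of \emph{faraway} coordinates ($|\veg^i_r|>\sigma$); within a refined class two positions may still have $\veg^{i_1}_r=1$ and $\veg^{i_2}_r=-2$, so a brick native to $i_2$ placed at $i_1$ can flip the sign at $r$. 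The paper avoids this by classifying via a five-way type per coordinate (zero, close-positive, faraway-positive, close-negative, faraway-negative), so that all positions in a zone already share the exact sign pattern of $\veg^i$, and sign-compatibility only needs to be checked for magnitude on close coordinates.

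Second, even after fixing the sign issue, the step ``a second application of Lemma~\ref{lemma:color} within each refined class will pick the intra-class distribution so that tight values on critical coordinates respect $\veg^i$'' does not follow from anything Lemma~\ref{lemma:color} provides. That lemma selects a proportionally-colored subfamily summing to $\ve 0$; it says nothing about being able to distribute the selected bricks across recipients so that each recipient's partial sum stays below a per-recipient cap $|\veg^i_r|\le\sigma$, which is precisely what $\veeta^i\sqsubseteq\veg^i$ demands at close coordinates. The paper resolves exactly this difficulty by a different mechanism: it further refines zones into subzones by the exact value on close coordinates, defines ``bad'' bricks as those with a nonzero close coordinate, and then makes a \emph{counting} argument (Lemma~\ref{lemma:bad}) that bounds the total number of ``bad groups'' by $\sum_i\sum_{h}|{\veg}^i[h]|$ over close coordinates, so that among the many groups produced by the colorful Steinitz lemma at least one is ``good''---in a good group there is at most one bad brick per subzone machine, which fits since bricks in the same subzone share the same close-coordinate values. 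Your proposal has no analogue of this counting step; the phrase ``enough non-critical mass to absorb the discrepancy'' does not identify what quantity is being counted or why $\|\veg\|_\infty>\tau$ suffices, so the proof of $\veeta\sqsubseteq\veg$ is missing.
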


\subsection{Proof of Lemma~\ref{lemma:sub}}\label{subsec:lemma}

\subsubsection{A hierarchical structure over bricks of \text{${\veg}$}}\label{subsec:notion}
As we describe in the overview, we will construct $\veeta$ in Lemma~\ref{lemma:sub} from the bricks $\veeta_j^i$'s via \lq\lq cross-position\rq\rq\, construction. For each $\veeta^i$, there will be some restrictions regarding which brick $\veeta_j^{i'}$ can be used, indicated by the hierarchical structure we introduce in the following.

We first observe that as $A_i$'s and $D_i$'s are small submatrices with the largest coefficient bounded by $\Delta$, there are in total at most $\Delta^{\OO(s_At_A+s_Dt_D)}$ different kinds of $A_i$'s and $D_i$'s, and hence $\varphi\le \Delta^{\OO(s_At_A+s_Dt_D)}=\OFPT(1)$ different pairs of $(A_i,D_i)$. By re-indexing, we may divide $[n]$ into $\varphi$  subsets as $[n]=\bigcup_{k=1}^{\varphi} [r_{k-1}+1:r_k]$ where $0=r_0<r_1<r_2<\cdots<r_{\varphi}=n$ such that $(A_i,D_i)$'s are identical for every $i\in [r_{k-1}+1:r_k]$.
Let $I_0=\{0\}$ and $I_k=[r_{k-1}+1:r_k]$.  For simplicity we let $D_{r_0}=D_0=C$, then $(C,D_1,D_2,\cdots,D_n)\veeta_j=\sum_{k=0}^{\varphi}\sum_{i\in I_k}D_{r_k}\veeta_j^i$.

We define {\it type} and {\it subtype} for integer vectors.  {Let $\sigma$ be some sufficiently large value (it suffices to take $\sigma= (\Delta \eta_{\max})^{\Delta^{\OO(s_At_A+s_Dt_D)}}$ as we will explain later}). 
We classify each integer $x$ into one of the five {\it types}: 
\begin{itemize}
    \item $0$, if $x=0$,
    \item close-positive, if $1\le x\le \sigma$,
    \item faraway-positive, if $x>\sigma$,
    \item close-negative, if $-\sigma\le x\le -1$, and
    \item faraway-negative, if $x<-\sigma$.
\end{itemize}


We can further classify all integers into $2\sigma+3$ {\it subtypes} by sub-dividing the type close-positive (or close-negative) into $\sigma$ categories, that is, $x$ is called of subtype-$x$ if $-\sigma\le x\le \sigma$. 

We now extend the definitions of types and subtypes to vectors. All $d$-dimensional vectors can be classified into $5^d$ types (or $(2\sigma+3)^d$ subtypes) such that two vectors $\vex$ and $\vey$ belong to the same type (or subtype) as a vector if and only if for every $1\le \ell\le d$, the $\ell$-th coordinate of $\vex$ and $\vey$ have the same type (or subtype) as an integer.


Now we classify the indices $0\le i\le n$ based on ${\veg}$ as follows:
\begin{itemize}
    \item Megazone. Each $I_k$, $0\le k\le \varphi$ is called a megazone.
    There are $\varphi+1=\Delta^{\OO(s_At_A+s_Dt_D)}$ megazones.
    \item Zone. A megazone is sub-divided into zones such that indices $i,i'$ belong to the same zone if and only if they belong to the same megazone and ${\veg}^i$ and ${\veg}^{i'}$ have the same type. There are at most $1+5^{t_A}\varphi=\Delta^{\OO(s_At_A+s_Dt_D)}$ different zones. For $0\le \nu\le 5^{t_A}\varphi$, let $\beta_\nu\in\Z_{\ge 0}$ be the number of indices belonging to zone-$\nu$.
    \item Subzone. A zone is sub-divided into subzones so that indices $i,i'$ belong to the same subzone if and only if they belong to the same zone and ${\veg}^i$ and ${\veg}^{i'}$ have the same subtype. There are at most $(2\sigma+3)^{t_A}\cdot (1+5^{t_A}\varphi)$ subzones. For $0\le \iota\le (2\sigma+3)^{t_A} (1+5^{t_A}\varphi)-1$, let $\gamma_{\iota}\in\Z_{\ge 0}$ be the number of indices belonging to subzone-$\iota$.
    \item Slot. Every index $0\le i\le n$ is called a slot. There are $n+1$ slots.
\end{itemize}

Figure~\ref{fig1} in Appendix~\ref{figure} illustrates the relationships among megazones, zones and subzones. 
It is remarkable that the number of zones, $1+5^{t_A}\varphi$, is independent of the value of $\sigma$. $\sigma$ only comes into play at subzone level, which is crucial to our proof. Further, note that megazone-0 only contains one zone, and this zone contains one subzone, and this subzone contains one slot, which is slot-0. For simplicity, we let slot-0 be in subzone-0 and zone-0.



For ease of description, we will take a viewpoint of the {\it Scheduling} problem. We view each brick $\veeta^i_j$ as a job and there are $N(n+1)$ jobs. We assume there are $n+1$ machines (from machine~0 to machine~$n$), and think of each job $\veeta_j^i$ as a job originally scheduled on machine~$i$. Machines can be divided into megazones, zones and subzones based on their indices. A job (brick) that is originally scheduled on a machine in megazone-$k$ (or zone-$\nu$ or subzone-$\iota$, resp.) is called a megazone-$k$ (or zone-$\nu$ or subzone-$\iota$, resp.) job (brick). 
We add up jobs on each machine just like adding up vectors, whereas the load of machine~$i$ in the original schedule is ${\veg}^i$. 

Constructing a new vector $\veeta\sqsubseteq {\veg}$ is like rescheduling jobs. That is, we remove jobs from machines in the original schedule, and then select and re-assign a subset of suitable jobs to machines. By doing so, we obtain a partial schedule. The load of machine~$i$ in the partial schedule, which is the summation of jobs assigned to it, will be $\veeta^i$. We will take ${\veg}^i$ as the {\it capacity} of machine~$i$. If the summation of several jobs equals $\vex\sqsubseteq{\veg}^i$, then we say the jobs fit machine~$i$. 

To prove Lemma~\ref{lemma:sub}, we need to construct a partial schedule $\ve\eta$ such that (i) $H^{\textnormal{two-stage}}\ve\eta=\ve 0$, (ii) $(C,D_1,\cdots,D_n)\ve\eta=\ve 0$ and (iii) $\ve\eta\sqsubseteq  \veg$. 
In the following Subsection~\ref{subsec:prop-1}, Subsection~\ref{subsec:prop-2}, and Subsection~\ref{subsec:prop-3}, we will identify the conditions for the partial schedule to satisfy each property respectively, and finalize the proof of Lemma~\ref{lemma:sub} in Subsection~\ref{subsec:final-lemma}. 



\subsubsection{Selecting jobs to satisfy property (i) - $H^{\textnormal{two-stage}}\ve\eta=\ve 0$}\label{subsec:prop-1}

Recall that $A_i$'s are the same for $i$ in each megazone (and hence in each zone). For $\nu\ge 1$, let machine~$i$ be an arbitrary zone-$\nu$ machine and $\veeta_{j_1}^{i'}$ be an arbitrary zone-$\nu$ job. Then 
$A_i\veeta_{j_1}^{i'}=-\veq$ by the definition in Eq~\eqref{eq5-extra}. If we put one zone-$\nu$ job $\veeta_{j_1}^{i'}$ on machine~$i$ and meanwhile put one zone-$0$ job $\veeta_{j_2}^0$ on machine~$0$, then it holds that $B_i\veeta_{j_2}^0+A_i\veeta_{j_1}^{i'}=\ve 0$. Hence, we have the following observation.

\begin{observation}\label{obs:prop-i}
Let $h$ be an arbitrary non-negative integer. Let $\ve\eta=(\veeta^0,\veeta^1,\cdots,\veeta^n)$ be a partial schedule where we assign arbitrary $h$ jobs in zone-$\nu$ to each zone-$\nu$ machine (i.e., for every $i$ in zone-$\nu$, $\veeta^i$ is the summation of $h$ zone-$\nu$ jobs). Then $H^{\textnormal{two-stage}}\ve\eta=\ve 0$.
\end{observation}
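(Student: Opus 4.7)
The observation claims that the two-stage part of the constraint, namely $B\veeta^0 + A_i\veeta^i = \ve 0$ for every $i\in[n]$, is automatically satisfied by the construction. The proof plan is essentially a bookkeeping argument that exploits (a) the tier-1 property of the $\veeta_j$'s in the uniform decomposition and (b) the fact that all machines within a single megazone share the same $A$-matrix.

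First, I would unpack the target: $H^{\textnormal{two-stage}}\ve\eta = \ve 0$ is equivalent to $B\veeta^0 + A_i\veeta^i = \ve 0$ for every $i\in[n]$. Second, I would recall the tier-1 property (guaranteed by the preceding invocation of Lemma~\ref{lemma:all-balance}): for every $j\in[N]$ and every $i'\in[n]$, we have $B\veeta_j^0 = \veq$ and $A_{i'}\veeta_j^{i'} = -\veq$. Hence if $\veeta^0$ is the sum of $h$ zone-$0$ bricks $\veeta_{j_1}^0,\ldots,\veeta_{j_h}^0$, then $B\veeta^0 = \sum_{k=1}^h B\veeta_{j_k}^0 = h\veq$, independently of which $h$ zone-$0$ bricks were picked.

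Third, fix an index $i$ lying in some zone-$\nu$ with $\nu\ge 1$. By construction, $\veeta^i = \sum_{k=1}^h \veeta_{j_k'}^{i_k'}$ where each $\veeta_{j_k'}^{i_k'}$ is a zone-$\nu$ brick, so $i_k'$ lies in the same zone (and hence in the same megazone) as $i$. Here I would use the megazone partition $[n]=\bigcup_k [r_{k-1}+1:r_k]$ introduced in Section~\ref{subsec:notion}: by definition all indices in one megazone share an identical $A$ matrix, so $A_i = A_{i_k'}$ for every $k$. Therefore
\begin{equation*}
A_i\veeta^i = \sum_{k=1}^h A_i\veeta_{j_k'}^{i_k'} = \sum_{k=1}^h A_{i_k'}\veeta_{j_k'}^{i_k'} = \sum_{k=1}^h (-\veq) = -h\veq.
\end{equation*}
Combining with step two, $B\veeta^0 + A_i\veeta^i = h\veq - h\veq = \ve 0$ for every $i\in[n]$, which is exactly $H^{\textnormal{two-stage}}\ve\eta = \ve 0$.

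There is no real obstacle here; the only subtlety is noticing that the equality $A_i\veeta_{j_k'}^{i_k'} = -\veq$ requires $i_k'$ to be a coordinate at which the tier-1 identity $A_{i_k'}\veeta_{j_k'}^{i_k'} = -\veq$ holds, and that replacing $A_{i_k'}$ by $A_i$ is legitimate because zones sit inside megazones and $A_j$ is constant across each megazone. This is precisely why the observation is stated at the zone level (not merely at the subzone level): zones are exactly the granularity at which the matching between the $B$-contribution on machine $0$ and the $A$-contribution on machine $i$ can be carried out uniformly.
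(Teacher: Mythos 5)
Your proof is correct and takes essentially the same approach as the paper: the paper sketches the same argument informally just before stating the observation (noting that $A_i$ is constant across a megazone, that the tier-1 property forces $A_i\veeta_{j_1}^{i'}=-\veq$ and $B\veeta_{j_2}^0=\veq$, and then letting these cancel), and you simply spell out the $h$-fold summation explicitly. No gap.
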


\subsubsection{Selecting jobs to satisfy property (ii) - $(C,D_1,\cdots,D_n)\ve\eta=\ve 0$}\label{subsec:prop-2}

Recall that $D_0=C$ and $(D_0,D_1,\cdots,D_n)\sum_{j=1}^N\veeta_j=\ve 0$, which is a long sequence of addition consisting of $(n+1)N$ summands. We are interested in a subsequence whose sum is $0$ and meanwhile respects Observation~\ref{obs:prop-i}, that is, we want to select exactly $h\beta_{\nu}$ jobs from zone-$\nu$ such that their sum (after multiplying corresponding $D_{i}$'s) is $0$ (while recall that there are exactly $\beta_{\nu}$ zone-$\nu$ machines). Towards this, we first prove the following lemma, which gives a \lq\lq colorful\rq\rq\, version of the Steinitz Lemma.

\begin{lemma}\label{lemma:color}
Let $\vex_1,\ldots,\vex_{M}\in \mathbb{Z}^d$ be a sequence of vectors such that $\|\vex_i\|_{\infty}\le \zeta$ for some $\zeta\ge 1$ and every $i=1,\ldots,M$. Furthermore, there are $\mu$ colors, and each vector $\vex_i$ is associated with one color. There are in total $\alpha_j\overline{m}$ vectors of color $j$ where $\alpha_j,\overline{m}\in\Z_{>0}$ and $\sum_{j=1}^{\mu}\alpha_j=\alpha$, $M=\alpha \overline{m}$. Supposing that $\sum_{i=1}^{M}\vex_i=\ve0$ and $M$ is sufficiently large (i.e., $M>(2d\zeta+2\mu\zeta+1)^{d+\mu}\alpha+\alpha+d+\mu$), then among $\vex_1,\cdots,\vex_{M}$ we can find $\alpha_jm$ vectors of each color $j$ such that their summation is $\ve0$, and $m\le (2d\zeta+2\mu\zeta+1)^{d+\mu}$. 
\end{lemma}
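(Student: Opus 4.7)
The plan is to reduce Lemma~\ref{lemma:color} to a colored variant of the Steinitz Lemma by augmenting each input vector with a color indicator and then applying pigeonhole on a carefully designed $(d+\mu)$-dimensional ``state''. First, I augment each $\vex_i$ by appending the unit vector $\ve e_{c(i)}\in\Z^{\mu}$ (the $c(i)$-th standard basis vector), obtaining $\vey_i=(\vex_i,\ve e_{c(i)})\in\Z^{d+\mu}$ with $\|\vey_i\|_\infty\le\zeta$ (using $\zeta\ge 1$). The total sum is $\sum_i\vey_i=(\ve 0,\overline{m}\,\vealpha)$, where $\vealpha=(\alpha_1,\ldots,\alpha_\mu)$.

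Next, I apply the Steinitz Lemma in $\R^{d+\mu}$ to the sequence $\vey_1,\ldots,\vey_M$: there is a permutation $\pi$ such that for every $\ell$ the partial sum $\sum_{i\le\ell}\vey_{\pi(i)}$ lies within $\ell_\infty$-distance $(d+\mu)\zeta$ of the linear interpolation $\frac{\ell-(d+\mu)}{M}(\ve 0,\overline{m}\,\vealpha)$. Writing $S_\ell\in\Z^{d}$ for the first $d$ coordinates of this partial sum and $c_j(\ell)$ for the number of color-$j$ vectors among $\vex_{\pi(1)},\ldots,\vex_{\pi(\ell)}$, Steinitz yields both $\|S_\ell\|_\infty\le (d+\mu)\zeta$ and $\bigl|c_j(\ell)-\frac{\ell-(d+\mu)}{M}\alpha_j\overline{m}\bigr|\le (d+\mu)\zeta$ for all $\ell$ and $j$.

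The key step is to subsample only at indices $\ell=\alpha k$ for $k=0,1,\ldots,\overline{m}$ and to track the discrepancy $\tilde c_j(k):=c_j(\alpha k)-\alpha_j k\in\Z$. A short computation yields $\bigl|\tilde c_j(k)+\tfrac{(d+\mu)\alpha_j}{\alpha}\bigr|\le (d+\mu)\zeta$, so each $\tilde c_j(k)$ takes at most $2d\zeta+2\mu\zeta+1$ distinct integer values, while $S_{\alpha k}$ takes at most $(2d\zeta+2\mu\zeta+1)^d$ values. Hence the combined state $(S_{\alpha k},\tilde c_1(k),\ldots,\tilde c_\mu(k))$ lies in a set of size at most $(2d\zeta+2\mu\zeta+1)^{d+\mu}$. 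The hypothesis on $M$ forces $\overline{m}+1$ to exceed this bound, so pigeonhole yields indices $0\le k_1<k_2\le\overline{m}$ with $k_2-k_1\le (2d\zeta+2\mu\zeta+1)^{d+\mu}$ and identical states.

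Finally, I take the block of vectors in positions $\alpha k_1+1$ through $\alpha k_2$ under $\pi$. Equal first coordinates of the state give $S_{\alpha k_2}-S_{\alpha k_1}=\ve 0$, so the subset indeed sums to $\ve 0$. Equal color discrepancies give $c_j(\alpha k_2)-c_j(\alpha k_1)=\alpha_j(k_2-k_1)=\alpha_j m$ for every color $j$, with common multiplier $m:=k_2-k_1\ge 1$ bounded by $(2d\zeta+2\mu\zeta+1)^{d+\mu}$. The main technical obstacle is engineering the augmentation so that both the $\vex$-part and the color-part of the state live inside boxes of the \emph{same} side length $2d\zeta+2\mu\zeta+1$; this is secured by using $\ell_\infty$-unit color indicators and restricting $\ell$ to multiples of $\alpha$, and the additive slack $\alpha+d+\mu$ in the hypothesis on $M$ absorbs the Steinitz offset $d+\mu$ as well as the $+1$ needed for the pigeonhole argument.
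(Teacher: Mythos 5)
Your proof is correct and uses essentially the same strategy as the paper's: lift each vector to $\Z^{d+\mu}$ by appending a color indicator, apply the Steinitz Lemma, and pigeonhole over a $(d+\mu)$-dimensional bounded integral state to extract a block with the right color proportions. The one cosmetic difference is how you arrange for integrality of the tracked state: the paper shifts the sample points to $\ell_k=k\alpha+d+\mu$ so that the Steinitz offset $\frac{\ell_k-d-\mu}{M}\vey$ becomes an integer vector, while you sample at $\ell=\alpha k$ and instead track the integer discrepancies $\tilde c_j(k)=c_j(\alpha k)-\alpha_j k$; both resolve the same issue, and your accounting of the pigeonhole ($\overline{m}+1$ states in a box of size $(2d\zeta+2\mu\zeta+1)^{d+\mu}$, difference bounded by the box size) matches the paper's.
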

By the Steinitz Lemma, it is easy to see the existence of a subset of vectors that add up to $\ve0$. Lemma~\ref{lemma:color} further indicates that the number of vectors of each color in this subset is proportional to their number in the whole set of $M$ vectors. Notice that $\overline{m}$ and $m$ are independent with each other. $\overline{m}$ may be very large, while $m$ can be bounded by an FPT-value. 
See the proof in Appendix~\ref{appsec:color}. 

{
Now we apply Lemma~\ref{lemma:color} to the equation $(D_0,D_1,\cdots,D_n)\sum_{j=1}^N\veeta_j=\sum_{i,j,\ell} D_\ell\veeta_j^i=\ve 0$ as follows. If $i$ belongs to some zone-$\nu$ (which further belongs to some megazone-$k$), 
then we take each summand $D_{\ell}\veeta_j^i$ as a vector in $\Z^{s_D}$ of color $\nu$. Consequently, we have in total $1+5^{t_A}\varphi=\Delta^{\OO(s_At_A+s_Dt_D)}$ different colors, and $M=(n+1)N$ vectors where the number of vectors in each color $\nu$ is $N\beta_\nu$. Further notice that $\|D_{\ell}\veeta_j^i\|_{\infty}\le t_D\Delta\eta_{\max}$. Hence, as long as $M=(n+1)N>\rho (n+1)$ for $\rho=(\Delta \eta_{\max})^{\Delta^{\OO(s_At_A+s_Dt_D)}}$, we can always find out $m\beta_\nu$ summands in color $\nu$ (corresponding to $m\beta_\nu$ jobs in zone-$\nu$) such that $m\le(\Delta \eta_{\max})^{\Delta^{\OO(s_At_A+s_Dt_D)}}$, and they sum up to $\ve0$. 
Moreover, Lemma~\ref{lemma:color} can be applied iteratively until there are fewer than $\rho(n+1)$ jobs left. }
Our argument above implies the following.

\begin{lemma}\label{obs:prop-ii}
There exist some $m,\rho=(\Delta \eta_{\max})^{\Delta^{\OO(s_At_A+s_Dt_D)}}$ such that if $N>\rho$, then all the $(n+1)N$ jobs (bricks) can be divided into $\lfloor \frac{N-\rho}{m}\rfloor +1 :=\psi+1$ groups such that 
\begin{compactitem}
    \item Except the last group, each group consists of $\beta_\nu m$ zone-$\nu$ jobs for all $\nu$.
    \item The last group consists of $\beta_\nu m'$ zone-$\nu$ jobs where ${m}'\le \rho+m$.
    \item If we evenly distribute jobs in every group to machines such that a zone-$\nu$ machine is assigned $m$ jobs (or $m'$ jobs if it is the last group), then the partial schedule $\veeta$ satisfies that $(C,D_1,\cdots,D_n)\veeta=\ve 0$. 
\end{compactitem}
\end{lemma}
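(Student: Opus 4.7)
\medskip

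\noindent\textbf{Proof plan for Lemma~\ref{obs:prop-ii}.} The plan is to view the identity $(D_0,D_1,\dots,D_n)\sum_{j=1}^{N}\veeta_j = \ve0$ as a long sum of $(n+1)N$ vectors in $\Z^{s_D}$ and repeatedly apply the colorful Steinitz variant (Lemma~\ref{lemma:color}) to carve off one group at a time. Concretely, each summand $D_{\ell}\veeta_j^i$ becomes a vector of dimension $d=s_D$ and norm at most $\zeta:=t_D\Delta\eta_{\max}$. Color it by the zone index of $i$, so the palette has $\mu := 1+5^{t_A}\varphi=\Delta^{\OO(s_At_A+s_Dt_D)}$ colors, and for each zone-$\nu$ the number of vectors of color $\nu$ is $N\beta_\nu$. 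Writing $\alpha_\nu = \beta_\nu$ and $\alpha = \sum_\nu \beta_\nu = n+1$, the total count is $M=(n+1)N$ with $\alpha_\nu\overline m$ vectors of color $\nu$ where $\overline m = N$.

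Now I would iterate. Define $m = (2s_D\zeta+2\mu\zeta+1)^{s_D+\mu}$ and $\rho = m\alpha + \alpha + s_D + \mu$. A quick estimate gives $m,\rho = (\Delta\eta_{\max})^{\Delta^{\OO(s_At_A+s_Dt_D)}}$ as claimed. As long as the current total count $M'$ satisfies $M' > \rho(n+1)$ (which is the hypothesis of Lemma~\ref{lemma:color}, $M' > m\alpha + \alpha + d + \mu$, after folding $\alpha=n+1$), the colorful Steinitz lemma furnishes a sub-collection using exactly $m\beta_\nu$ vectors of each color~$\nu$ that sum to $\ve0$. Removing this sub-collection preserves the zero-sum property of what remains and also preserves the proportional structure (each color still has a multiple of $\beta_\nu$ vectors). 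I would repeat this extraction $\psi := \lfloor (N-\rho)/m\rfloor$ times; after that, the number of vectors left of color~$\nu$ is $\beta_\nu m'$ with $m' = N - \psi m \in [\rho, \rho + m)$, so $m' \le \rho + m$, giving the last group directly. Each of the $\psi+1$ groups uses the same number $m$ (or $m'$) of jobs in every zone-$\nu$, uniformly across the $\beta_\nu$ machines of that zone, so distributing them evenly is well-defined.

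Finally I verify the three bullet items. The first two bullets are immediate from the iterative construction, since the extraction guarantees $m\beta_\nu$ (resp.\ $m'\beta_\nu$) zone-$\nu$ jobs per group. For the third bullet, fix a group and let $\veeta$ be the partial schedule obtained by assigning its zone-$\nu$ jobs evenly to the zone-$\nu$ machines. Then $(C,D_1,\dots,D_n)\veeta = \sum_{\nu}\sum_{(i,j)\in \text{group}, i\in I_k(\nu)} D_{r_k}\veeta_j^i$, which is precisely the zero-sum sub-collection singled out by Lemma~\ref{lemma:color}; for the last group it is the zero-sum leftover (the total is $\ve0$, and each of the earlier groups sums to $\ve0$). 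Hence $(C,D_1,\dots,D_n)\veeta = \ve0$.

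The main obstacle I anticipate is only a bookkeeping one: making sure the stopping criterion for the iteration matches the exact hypothesis of Lemma~\ref{lemma:color} (i.e.\ that $M'>(2d\zeta+2\mu\zeta+1)^{d+\mu}\alpha+\alpha+d+\mu$ at every step), and then packaging the resulting inequality $m' = N-\psi m < \rho + m$ cleanly. Once $m$ and $\rho$ are chosen as above so that the hypothesis collapses to $M' > \rho(n+1)$, the iteration count $\psi+1 = \lfloor(N-\rho)/m\rfloor+1$ and the three claimed properties fall out with no further work.
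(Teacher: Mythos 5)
Your plan is the same as the paper's: color each summand $D_\ell\veeta_j^i$ by the zone index of $i$, apply the colorful Steinitz variant (Lemma~\ref{lemma:color}) with $d=s_D$, $\zeta=t_D\Delta\eta_{\max}$, $\mu=1+5^{t_A}\varphi$, $\alpha_\nu=\beta_\nu$, $\alpha=n+1$, $\overline m=N$, and iterate, peeling off one group of $m\beta_\nu$ zone-$\nu$ jobs at a time until fewer than $\rho(n+1)$ summands remain. The observation that removing a zero-sum, proportional sub-collection preserves both the zero-sum property and the $\beta_\nu$-divisibility of the counts per color is correct and is precisely what makes iteration legitimate. The handling of the last group (sums to zero because the total and all earlier groups sum to zero) is also correct.

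There is, however, a concrete error in how you set $\rho$. You define $\rho = m\alpha + \alpha + s_D + \mu$ with $\alpha=n+1$; this is $\Theta(n)$, not $(\Delta\eta_{\max})^{\Delta^{\OO(s_At_A+s_Dt_D)}}$, so your ``quick estimate'' is wrong and the claim ``the hypothesis of Lemma~\ref{lemma:color} becomes $M'>\rho(n+1)$'' is also false — $\rho(n+1)$ would be $(m\alpha+\alpha+s_D+\mu)(n+1)$, an extra factor of $n+1$ beyond what the lemma actually requires. An $n$-dependent $\rho$ would also wreck the bound $m'\le \rho+m$ for the last group, which must be $\OFPT(1)$ for the rest of Section~\ref{subsec:prop-3} (in particular Lemma~\ref{lemma:bad} and the choice of $\sigma \ge (\rho+m)\eta_{\max}$). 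The fix is local: the hypothesis of Lemma~\ref{lemma:color} is $M'>m\alpha+\alpha+s_D+\mu$; with $M'=(n+1)N'$ and $\alpha=n+1\ge 1$ this is implied by $N'>m+1+s_D+\mu$, so you should take $\rho:=m+1+s_D+\mu$ (or any larger $\OFPT(1)$ quantity), which indeed satisfies $\rho=(\Delta\eta_{\max})^{\Delta^{\OO(s_At_A+s_Dt_D)}}$. With that change, the iteration runs $\psi=\lfloor(N-\rho)/m\rfloor$ times and the leftover satisfies $\rho\le m'=N-\psi m<\rho+m$, giving all three bullets.
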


\begin{remark*} Note that the number of zones, and thus $m,\rho,\psi$, are all independent of $\sigma$. We pick $\sigma\ge (\rho+m)\eta_{\max}$ which guarantees that when we evenly distribute jobs in each group to machines, the infinity norm of their sum never exceeds $\sigma$.
\end{remark*}


Notice that since we assign the same number of zone-$\nu$ jobs to zone-$\nu$ machines, by Observation~\ref{obs:prop-i} the partial schedule $\veeta$ in Lemma~\ref{obs:prop-ii} also satisfies that $H^{\textnormal{two-stage}}\veeta=\ve0$, and hence $H\veeta=\ve0$.

\subsubsection{Selecting jobs to satisfy property (iii) - $\ve\eta\sqsubseteq {\veg}$}\label{subsec:prop-3}

According to Lemma~\ref{obs:prop-ii}, by evenly distributing jobs to machines in each zone, every group of jobs induces a partial schedule $\veeta$. We show in this subsection that if there are sufficiently many groups, 
then there must be a group which induces $\veeta\sqsubseteq {\veg}$. For simplicity we ignore the last group and focus on remaining groups.

We first briefly argue why evenly distributing jobs to machines in each zone in an arbitrary way may generate a partial schedule that is $\not\sqsubseteq{\veg}$. Note that when we apply Lemma~\ref{lemma:color} to divide jobs into groups, we can only guarantee there are $\beta_\nu m$ jobs from each zone-$\nu$ (and hence every machine in zone-$\nu$ can get exactly $m$ jobs in zone-$\nu$), but we cannot guarantee there are $\gamma_{\iota} m$ jobs from each subzone-$\iota$. Hence, when we evenly distribute jobs, some machine in subzone-$\iota_1$ may get jobs from subzone-$\iota_2$. As the subtypes of ${\veg}^{\iota_1}$ and ${\veg}^{\iota_2}$ are different, a job that fits a subzone-$\iota_2$ machine does not necessarily fit a subzone-$\iota_1$ machine. 

Note that megazone-$0$ only contains one zone (and one subzone). Thus all megazone-$0$ jobs (and thus megazone-$0$ jobs in each group), fit machine~$0$. From now on we only consider machine~$1$ to machine~$n$, and only consider groups of jobs which are not the last group.

Consider machines and jobs in each zone-$\nu$. Since in each zone ${\veg}^i$'s have the same type, we know if some coordinate, say, the $h$-th coordinate of ${\veg}^i$ is $0$, then the $h$-th coordinate of any zone-$\nu$ job is also $0$. Recall that we have set $\sigma\ge (m+\rho)\eta_{\max}$ to be sufficiently large such that if we add any $m$ 
jobs, the absolute value of each coordinate of the sum is no more than $\sigma$. 
Hence, when we distribute jobs to machines in each zone-$\nu$, if the sum of $m$ jobs does not fit machine~$i$ (i.e., $\not\sqsubseteq{\veg}^i$), then the violation must occur at some coordinate of ${\veg}^i$ which is close-positive or close-negative (i.e., with a value in $[1,\sigma]\cup [-\sigma,-1]$). We call all close-positive or close-negative coordinates of each ${\veg}^i$ as critical coordinates. 
Recall that $\veg^i$'s in the same zone share the same type, and hence the same critical coordinates. Let $CI_\nu=\{h_1^\nu,h_2^\nu,\cdots,h^\nu_{|CI_\nu|}\}$ be the set of critical coordinates for zone-$\nu$, that is, for any $i$ in zone-$\nu$, the $h_\ell^\nu$-th coordinate of ${\veg}^i$ falls in $[1,\sigma]\cup [-\sigma,-1]$. 

We consider the $h_\ell^\nu$-th coordinate of every job in zone-$\nu$. We say a job is {\it good} if its $h_\ell^\nu$-th coordinate is $0$ for {\it all} $1\le \ell\le |CI_\nu|$, and is {\it bad} otherwise (i.e., its $h_\ell^\nu$-th coordinate is nonzero for some $\ell$). It is clear that good jobs never cause trouble in the sense that any $m$ good jobs in zone-$\nu$ fit a zone-$\nu$ machine. It suffices to consider the scheduling of bad jobs.

Recall there are $\gamma_{\iota}$ slots (and hence $\gamma_{\iota}$ machines) in each subzone-$\iota$. We say a group is {\it bad} in subzone-$\iota$ if it contains more than $\gamma_{\iota}$ bad jobs in subzone-$\iota$, and is {\it good} if it is not a bad group in {\it any} subzone. We have the following lemmas regarding good and bad groups.

\begin{lemma}\label{lemma:good}
If a group is good and is not the last group in Lemma~\ref{obs:prop-ii}, then there is an assignment of jobs to machines such that the partial schedule $\veeta$ satisfies that $H\veeta=\ve0$, $\|\veeta\|_{\infty}\le m\eta_{\max}$ and $\veeta\sqsubseteq {\veg}$.
\end{lemma}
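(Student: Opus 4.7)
The plan is to give an explicit reassignment of all jobs in the good group to machines, and then to verify the three required properties. First, assign the $m$ megazone-$0$ bricks in the group to machine~$0$. Next, for each zone $\nu \ge 1$ and each subzone-$\iota$ inside zone-$\nu$, let $b_\iota \le \gamma_\iota$ denote the number of bad subzone-$\iota$ bricks in the group (this inequality is precisely the good-group hypothesis); place these $b_\iota$ bricks on $b_\iota$ distinct subzone-$\iota$ machines, one per machine. Then distribute the remaining zone-$\nu$ bricks, which by construction are all good, arbitrarily to fill each zone-$\nu$ machine up to exactly $m$ bricks; since good zone-$\nu$ bricks vanish at every critical coordinate of zone-$\nu$, placing them on any zone-$\nu$ machine cannot create a violation there.

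The identity $(C,D_1,\ldots,D_n)\veeta = \ve 0$ is immediate from Lemma~\ref{obs:prop-ii}, since the chosen group sums to $\ve 0$ under these blocks regardless of how its bricks are reshuffled across machines. For the two-stage constraints I will use that every $\veeta_j$ is tier-$1$: the $m$ megazone-$0$ bricks on machine~$0$ give $B_i\veeta^0 = m\veq$ for every $i\ge 1$, and because $A_i = A_{i''}$ whenever $i,i''$ lie in the same zone, each of the $m$ bricks placed on a zone-$\nu$ machine $i$ contributes $A_i\veeta_j^{i''} = A_{i''}\veeta_j^{i''} = -\veq$, yielding $A_i\veeta^i = -m\veq$ and hence $B_i\veeta^0 + A_i\veeta^i = \ve 0$. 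The norm bound $\|\veeta\|_\infty \le m\eta_{\max}$ is immediate, since each machine receives $m$ bricks of infinity norm at most $\eta_{\max}$.

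The main obstacle, and the very reason for introducing subzones and the good/bad dichotomy, is proving $\veeta \sqsubseteq \veg$. At brick~$0$ this is a clean sign-compatibility observation: since every $\veeta_j^0$ is sign-compatible with $\veg^0$ and $\sum_{j=1}^N \veeta_j^0 = \veg^0$, no cancellation occurs coordinate-wise, so any partial sum $\sum_{j\in J_0}\veeta_j^0$ has each coordinate bounded in absolute value by $|\veg^0(h)|$ and matching the sign of $\veg^0(h)$, giving $\veeta^0 \sqsubseteq \veg^0$. For brick $i \ge 1$ in subzone-$\iota$ of zone-$\nu$, I will verify $\veeta^i \sqsubseteq \veg^i$ coordinate-wise along types: at a zero coordinate of $\veg^i$ every zone-$\nu$ brick vanishes; at a close (critical) coordinate of $\veg^i$ only the at-most-one bad subzone-$\iota$ brick can contribute, and the same-subzone condition guarantees $\veg^{i''}(h) = \veg^i(h)$ exactly, so that brick's value is automatically dominated by $\veg^i(h)$ with matching sign; at a faraway coordinate every zone-$\nu$ brick shares the sign of $\veg^i(h)$ (same type in the zone), and the sum of $m$ such bricks has magnitude at most $m\eta_{\max} \le \sigma < |\veg^i(h)|$ by the standing choice $\sigma \ge (\rho+m)\eta_{\max}$. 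Together these cases yield $\veeta^i \sqsubseteq \veg^i$ for all $i$, completing the proof.
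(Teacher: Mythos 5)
Your proof is correct and follows essentially the same route as the paper: place at most one bad brick per machine within its own subzone, fill with good bricks of the same zone, then invoke the zone/subzone structure to verify conformality. You supply more detail than the paper does at two points — the sign-compatibility argument for $\veeta^0$ on machine~$0$, and the explicit three-way case split (zero / critical / faraway) for $\veeta^i\sqsubseteq\veg^i$ — whereas the paper relies on the earlier discussion preceding Lemma~\ref{lemma:good} to dispose of the zero and faraway coordinates; but the underlying ideas are identical.
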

\begin{proof}
Notice that a good group does not necessarily contain exactly $m\gamma_\iota$ jobs in each subzone-$\iota$, but it contains no more than $\gamma_{\iota}$ bad jobs in each subzone-$\iota$. Hence, we reschedule jobs to obtain a partial schedule such that every machine in subzone-$\iota$ is assigned 1 or 0 bad job in subzone-$\iota$, together with $m-1$ or $m$ good jobs in zone-$\nu$ (that contains subzone-$\iota$). 
We claim that, this partial schedule $\veeta$ satisfies Lemma~\ref{lemma:good}. First, by Lemma~\ref{obs:prop-ii}, jobs in every zone-$\nu$ is evenly distributed among machines in zone-$\nu$, 
and hence $H\veeta=\ve0$. Next, by the definition, a subzone-$\iota$ job is originally scheduled on a subzone-$\iota$ machine, and hence in the rescheduling it either stays at the original machine or moves to another subzone-$\iota$ machine. 
By the definition of a subzone all machines in subzone-$\iota$ share the same value on critical coordinates. This means, a single bad job in subzone-$\iota$ fits any machine in subzone-$\iota$. Recall that the critical coordinate of a good job always has value $0$, so $m$ good jobs, or a bad job with $m-1$ good jobs fit any machine in subzone-$\iota$. Hence, $\veeta\sqsubseteq {\veg}$.  
\end{proof}

In the meantime, there are not too many bad groups as implied by the following lemma.
\begin{lemma}\label{lemma:bad}
The total number of bad groups is bounded by $(2\sigma+3)^{t_A}(1+5^{t_A}\varphi)\sigma t_A$.
\end{lemma}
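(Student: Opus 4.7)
The plan is to bound the bad groups subzone by subzone, using a pigeonhole argument that leverages sign-compatibility of the decomposition to cap how many bad jobs any single subzone can harbor in total.

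First I would fix any subzone-$\iota$ lying inside some zone-$\nu$ and bound the total number of bad jobs originating in subzone-$\iota$ across \emph{all} groups. Recall that a subzone-$\iota$ job is a brick $\veeta_j^i$ with $i$ in subzone-$\iota$, and it is called bad precisely when some critical coordinate $h_\ell^\nu \in CI_\nu$ is nonzero. Since $\veeta_j \sqsubseteq \veg$, every brick $\veeta_j^i$ is sign-compatible with $\veg^i$ and in particular $(\veeta_j^i)_{h_\ell^\nu}$ is sign-compatible with $(\veg^i)_{h_\ell^\nu}$. Because $h_\ell^\nu$ is critical, $|(\veg^i)_{h_\ell^\nu}|\le \sigma$, so for each fixed $i$ in subzone-$\iota$ at most $\sigma$ indices $j\in[N]$ satisfy $(\veeta_j^i)_{h_\ell^\nu}\ne 0$. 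Summing over the $\gamma_\iota$ machines in subzone-$\iota$ gives at most $\gamma_\iota\sigma$ offending $(i,j)$ pairs per critical coordinate, and since $|CI_\nu|\le t_A$, a union bound yields at most $t_A\gamma_\iota\sigma$ bad subzone-$\iota$ jobs in total.

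Next I would convert this count of bad jobs into a count of bad groups by pigeonhole. By definition a group is bad in subzone-$\iota$ only if it contains strictly more than $\gamma_\iota$ bad subzone-$\iota$ jobs; since the groups partition the bricks, the number of groups that can be bad in subzone-$\iota$ is at most
\[
\frac{t_A\gamma_\iota\sigma}{\gamma_\iota+1}\;<\;t_A\sigma.
\]
Finally, a group is declared bad if it is bad in some subzone, so a union bound over all subzones gives a total number of bad groups at most (number of subzones)$\cdot \sigma t_A$. By the hierarchy in Subsection~\ref{subsec:notion} the number of subzones is at most $(2\sigma+3)^{t_A}(1+5^{t_A}\varphi)$, which yields the claimed bound $(2\sigma+3)^{t_A}(1+5^{t_A}\varphi)\sigma t_A$.

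I do not anticipate a serious obstacle: the argument is a direct double-counting combining sign-compatibility (to bound bad jobs per machine per coordinate) with the definitional threshold $\gamma_\iota$ of bad groups (to turn the job bound into a group bound). The only point that must be stated carefully is that the sign-compatibility bound is \emph{per coordinate} and is multiplied by $|CI_\nu|\le t_A$, and that the pigeonhole step relies on the groups being disjoint, which follows from Lemma~\ref{obs:prop-ii}.
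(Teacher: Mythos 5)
Your proof is correct and uses essentially the same argument as the paper: you bound the total number of bad subzone-$\iota$ jobs by $t_A\gamma_\iota\sigma$ (the paper does the same double-count, phrased as summing the absolute values of critical coordinates of $\veg^i$'s over the subzone), then divide by the threshold $\gamma_\iota$ to get at most $\sigma t_A$ bad groups per subzone and union-bound over the $(2\sigma+3)^{t_A}(1+5^{t_A}\varphi)$ subzones.
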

\begin{proof}
Consider any slot $i$ in a subzone-$\iota$ contained in zone-$\nu$, 
and there are $|CI_\nu|$ critical coordinates. Let ${\veg}^i=({\veg}^i[1],{\veg}^i[2],\cdots,{\veg}^i[t_A])$. Recall there are $\gamma_\iota$ slots (indices) in subzone-$\iota$. Consider the summation of absolute value over critical coordinates of ${\veg}^i$'s in each subzone-$\iota$, we have
$$\sum_{i\in \textnormal{subzone}-\iota}\sum_{h\in CI_\nu}|{\veg}^i[h]|\le |CI_\nu|\sigma\gamma_{\iota}\le \sigma\gamma_{\iota} t_A.$$
Note that every bad job in subzone-$\iota$ lies in the same orthant with nonzero value at some critical coordinate, and must thus contribute at least $1$ to the above value. Recall that a bad group must be bad in at least one subzone, and any bad group in subzone-$\iota$ contains more than $\gamma_{\iota}$ bad jobs in subzone-$\iota$. Hence, a bad group in subzone-$\iota$ contributes at least $\gamma_{\iota}$ in total, which implies that there can be at most $\sigma t_A$ bad groups in subzone-$\iota$. Given that there are $(2\sigma+3)^{t_A}(1+5^{t_A}\varphi)$ subzones, there can be at most $(2\sigma+3)^{t_A}(1+5^{t_A}\varphi)\sigma t_A$ bad groups, and Lemma~\ref{lemma:bad} is proved. 
\end{proof}


\subsubsection{Finalizing the proof of Lemma~\ref{lemma:sub}}\label{subsec:final-lemma}

{By Lemma~\ref{obs:prop-ii}, except the last group, there are $\psi=\lfloor\frac{N-\rho}{m}\rfloor$ groups, where each group is either bad or good. By Lemma~\ref{lemma:bad} there are at most $(2\sigma+3)^{t_A}(1+5^{t_A}\varphi)\sigma t_A=(\Delta\eta_{\max})^{\Delta^{\OO(s_At_A+s_Dt_D)}}$ bad groups. Hence if $\frac{N-\rho}{m}\ge (2\sigma+3)^{t_A}(1+5^{t_A}\varphi)\sigma t_A+1$, 
there will be at least one good group, and by Lemma~\ref{lemma:good} it induces some $\veeta$ such that ${\veeta}\sqsubseteq {\veg}$, $\|\veeta\|_{\infty}\le m\eta_{\max}\le \tau$ and $H\veeta=\ve0$. Further notice that only zone-0 jobs will be put on machine~$0$, and thus $\veeta^0$ is the summation of some $\veeta_j^0$'s. Therefore Lemma~\ref{lemma:sub} is proved.}

\subsection{Proof of Theorem~\ref{11-n}}\label{subsec:thm-proof}
Now we are ready to prove Theorem~\ref{11-n}. 
Consider an arbitrary $\veg\in \ker_{\Z}(H_{\textnormal{com}})$. As $\veg\in\ker_{\Z}(H_{\textnormal{com}}^{\textnormal{two-stage}})$, there exists a decomposition $\veg=\sum_j\ve\xi_j$ where $\ve\xi_j\in\ker_{\Z}(H_{\textnormal{com}}^{\textnormal{two-stage}})$, $\|\ve\xi_j\|_{\infty}=\OFPT(1)$ and $\ve\xi_j\sqsubseteq \veg$. But when can we guarantee that this can lead to a uniform decomposition? We observe that $B\ve\xi_j^0$'s are integers when $s_B=1$, and $B\ve\xi_j^0+A_i\ve\xi_j^i= 0$. If we aim for a uniform decomposition by merging $\ve\xi_j$'s, then the question becomes whether we can partition $\ve\xi_j$'s into different groups such that $B\ve\xi_j^0$'s within each group sum up to the same value (bounded by $\OFPT(1)$). An even partition does not need to exist, but we have the following sufficient condition. 

\begin{lemma}\label{lemma:number}
Let $x_1,x_2,\cdots,x_m\in \Z$ and $\zeta\in\Z_{>0}$ be integers such that $|x_i|\le \zeta$ for $i\in [m]$ and $\sum_{i=1}^m x_i=x$. If $x$ is a multiple of $(6\zeta^2+2\zeta+1)!$, then the $m$ integers can be partitioned into $m'$ subsets $T_1,T_2,\cdots,T_{m'}$ such that $\bigcup_{k=1}^{m'}T_k=[m]$, and for all $k\in [m']$ it holds that $|T_k|\le 2^{\OO(\zeta^2\log\zeta)}$, $\sum_{i\in T_k}x_i\in \{0,sgn(x)\cdot (6\zeta^2+2\zeta+1)!\}$ where $sgn$ denotes the standard sign function such that $sgn(x)=1$ if $x>0$, $sgn(x)=-1$ if $x<0$, and $sgn(x)=0$ if $x=0$.
\end{lemma}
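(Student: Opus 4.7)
The plan is to prove Lemma~\ref{lemma:number} in two phases. In the first phase I will use Lemma~\ref{lemma:merging-lemma} to aggregate the $x_i$'s into intermediate blocks whose sums are well-controlled both in magnitude and sign. In the second phase I will combine these blocks into the required subsets $T_k$ by exploiting the super-abundant divisibility of $K:=(6\zeta^2+2\zeta+1)!$.

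First, I apply Lemma~\ref{lemma:merging-lemma} in dimension $d=1$ to $x_1,\ldots,x_m$. This partitions $[m]$ into subsets $U_1,\ldots,U_{m''}$ with $|U_k|\le 6\zeta+2$ and $z_k:=\sum_{i\in U_k}x_i\sqsubseteq x$. In particular each $z_k$ is either $0$ or has the same sign as $x$, and $|z_k|\le(6\zeta+2)\zeta=6\zeta^2+2\zeta=:M$. The blocks with $z_k=0$ can immediately be declared as final $T_k$'s, each summing to $0$ with size at most $6\zeta+2$. Assuming without loss of generality that $x>0$, the remaining $z_k$'s lie in $[1,M]$, and their sum equals $x=sK$ for some $s\in\Z_{\ge 0}$, since by hypothesis $x$ is a multiple of $K=(M+1)!$.

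The core subclaim is then: any multiset $\{z_k\}$ of positive integers in $[1,M]$ whose total equals $sK$ can be partitioned into $s$ sub-multisets each summing to exactly $K$. I will prove this by induction on $s$: extract one sub-multiset summing to exactly $K$ and recurse on the remaining $(s-1)K$-sum multiset. To extract such a sub-multiset I exploit that $K=(M+1)!$ is divisible by every $j\in[1,M]$, so $K/j$ copies of any single value $j$ already sum to $K$. The general extraction uses a greedy construction combined with a correction step: processing $j=M,M-1,\ldots,1$, take as many available copies of $j$ as the current deficit allows; if a residual deficit $d\in[1,M-1]$ remains at the end, close it by a swap that removes a used copy of some $j'$ and adds an unused copy of $j'+d$. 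The super-divisibility of $K$ and the condition $\sum_j jn_j\ge K$ jointly guarantee the existence of a valid extraction.

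The main obstacle will be to make the correction step robust when a single swap does not close the gap because the pivot value $j'+d$ has no remaining unused supply. I will handle this with a short case analysis based on whether the greedy exhausts small-value copies before stabilizing, chaining multiple swaps and using that $K\gg M$ to guarantee termination. Once the subclaim is established, the size bound is automatic: every sub-multiset of $z_k$'s summing to $K$ contains at most $K$ elements (since each $z_k\ge 1$), so the corresponding $T_k$ aggregates at most $K(6\zeta+2)$ original indices, and since $K=(M+1)!\le(M+1)^{M+1}=2^{\OO(\zeta^2\log\zeta)}$, the bound $|T_k|\le 2^{\OO(\zeta^2\log\zeta)}$ follows.
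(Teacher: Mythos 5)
Your first phase exactly matches the paper's: you apply Lemma~\ref{lemma:merging-lemma} with $d=1$ to collapse the $x_i$'s into blocks $z_k$ that are sign-compatible with $x$ and bounded by $M=6\zeta^2+2\zeta$, discard the zero blocks as trivial $T_k$'s, and reduce to partitioning a multiset of positive integers in $[1,M]$ summing to $sK$, $K=(M+1)!$, into $s$ sub-multisets each summing to $K$. That reduction, and the final size bookkeeping, are exactly what the paper does. The divergence, and the gap, is in how you prove the core subclaim --- which is precisely the paper's Lemma~\ref{lemma:number-1}.

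Your proposed extraction (greedy from $j=M$ down to $j=1$, then a swap correction) does not work, and the issue is not merely a missing detail. Here is a concrete failure. Take $M=4$, so $K=5!=120$, and the multiset with $n_4=29$, $n_3=40$, $n_2=2$, $n_1=0$ (total $4\cdot 29+3\cdot 40+2\cdot 2=240=2K$). Greedy: at $j=4$, $\lfloor 120/4\rfloor=30>29$, take all $29$ fours (sum $116$, deficit $4$); at $j=3$, $\lfloor 4/3\rfloor=1$, take one three (deficit $1$); at $j=2$ and $j=1$, nothing can be taken. Greedy terminates with deficit $1$. Your proposed single swap --- ``remove a used copy of some $j'$, add an unused copy of $j'+d$'' with $d=1$ --- is unavailable: $j'=3$ requires an unused $4$ (there are none), $j'=4$ requires a $5$ (out of range). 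A valid partition does exist, namely $\{4^{29},2^2\}$ and $\{3^{40}\}$, but reaching it from the greedy state requires removing a $3$ and inserting \emph{two} $2$'s, a move outside the swap/chain-of-swaps mechanism you describe. Since you flag this corrective step as ``the main obstacle'' and only sketch a case analysis that you do not carry out, the proof is incomplete at exactly the point where it is most delicate.

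The paper sidesteps greedy entirely via Lemma~\ref{lemma:number-1}: group all copies of each value $j$ into full groups of $\zeta!/j$ copies (each summing to $\zeta!$), leaving a residual of fewer than $\zeta!/j$ copies per value; since $x$ is a multiple of $(\zeta+1)!$ hence of $\zeta!$, the combined residual sums to $a\zeta!$ with $a\le\zeta$; merge the residual with $\zeta+1-a$ full groups to make one block of sum $(\zeta+1)!$, and bundle the remaining full groups, whose count is a multiple of $\zeta+1$, into $(\zeta+1)$-tuples. This uses only divisibility and never has to ``fix'' an over- or under-shoot, so no correction step is needed. If you want to keep your greedy-flavored approach, you would need to replace the single swap by an exchange argument that allows replacing one used value by several smaller unused ones (and vice versa) and prove termination, which is essentially as much work as the direct grouping.
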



With Lemma~\ref{lemma:number}, we are able to prove the following.

\begin{lemma}\label{lemma:dec-1}
Let $\veg\in \ker_{\Z}(H_{\textnormal{com}})$. Let 
\begin{eqnarray*}
\lambda=(6\lambda_0^2+2\lambda_0+1)!=2^{2^{2^{\OO(t_B^2\log \Delta)}}}, \textnormal{ where } \lambda_0:=\Delta t_B g_{\infty}(H_{\textnormal{com}}^{\textnormal{two-stage}})=2^{2^{\OO(t_B^2\log \Delta)}}.
\end{eqnarray*}
If $B\veg^0$ is a multiple of $\lambda$, then $\veg$ admits a uniform decomposition $\veg=\sum_{j=1}^N\veeta_j$ such that $\|\veeta_j\|_{\infty}\le 2^{2^{2^{\OO(t_B^2\log \Delta)}}}$. Furthermore, $B\veeta_j^0$ is a multiple of $\lambda$ for all $j$.
\end{lemma}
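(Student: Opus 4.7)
The plan is to use Theorem~\ref{lemma23} (together with Lemma~\ref{lemma22}) to first write $\veg$ as a sign-compatible sum of Graver basis elements of $H_{\textnormal{com}}^{\textnormal{two-stage}}$, and then to merge these summands into groups using Lemma~\ref{lemma:number} so as to produce the desired uniform decomposition. Concretely, since $\veg \in \ker_{\Z}(H_{\textnormal{com}}) \subseteq \ker_{\Z}(H_{\textnormal{com}}^{\textnormal{two-stage}})$, we obtain a sign-compatible decomposition $\veg = \sum_{j=1}^{L} \ve\xi_j$ with $\ve\xi_j \in \G(H_{\textnormal{com}}^{\textnormal{two-stage}})$, $\ve\xi_j \sqsubseteq \veg$, $H_{\textnormal{com}}^{\textnormal{two-stage}} \ve\xi_j = \ve 0$, and $\|\ve\xi_j\|_\infty \le g_\infty(H_{\textnormal{com}}^{\textnormal{two-stage}}) \le 2^{2^{\OO(t_B^2 \log\Delta)}}$.

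Next I would define the integers $x_j := B \ve\xi_j^0 \in \Z$, using that $s_B = 1$. Since $B \in \Z^{1\times t_B}$ has entries bounded in absolute value by $\Delta$, we have $|x_j| \le \Delta t_B \|\ve\xi_j^0\|_\infty \le \Delta t_B \, g_\infty(H_{\textnormal{com}}^{\textnormal{two-stage}}) = \lambda_0$, and $\sum_j x_j = B\veg^0$, which by assumption is a multiple of $\lambda = (6\lambda_0^2 + 2\lambda_0 + 1)!$. Apply Lemma~\ref{lemma:number} with $\zeta = \lambda_0$ to partition $[L]$ into subsets $T_1,\ldots,T_N$ with $|T_k| \le 2^{\OO(\lambda_0^2 \log \lambda_0)}$ and $\sum_{j \in T_k} x_j \in \{0, \, sgn(B\veg^0) \cdot \lambda\}$. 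Then set $\veeta_k := \sum_{j \in T_k} \ve\xi_j$.

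It remains to verify this yields a uniform decomposition meeting the stated bound. (a) $\veg = \sum_k \veeta_k$ trivially. (b) Each $\veeta_k \sqsubseteq \veg$ because it is a partial sum of the sign-compatible decomposition $\{\ve\xi_j\}$, so its coordinates have the same sign as and absolute value at most those of $\veg$. (c) $H_{\textnormal{com}}^{\textnormal{two-stage}} \veeta_k = \ve 0$ since this holds for each summand. (d) $B \veeta_k^0 = \sum_{j \in T_k} x_j \in \{0, \veq\}$ with the fixed nonzero scalar $\veq = sgn(B\veg^0) \cdot \lambda$ (if $B\veg^0 = 0$, choose $\veq = \lambda$ arbitrarily; every $\veeta_k$ is then tier-0, which is allowed). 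In particular $B\veeta_k^0$ is a multiple of $\lambda$ for all $k$. (e) $\|\veeta_k\|_\infty \le |T_k| \cdot \max_j \|\ve\xi_j\|_\infty \le 2^{\OO(\lambda_0^2 \log \lambda_0)} \cdot 2^{2^{\OO(t_B^2 \log\Delta)}} = 2^{2^{2^{\OO(t_B^2 \log\Delta)}}}$, where the last estimate uses $\lambda_0 = 2^{2^{\OO(t_B^2 \log\Delta)}}$ so that $\lambda_0^2 \log \lambda_0 = 2^{2^{\OO(t_B^2 \log\Delta)}}$.

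The only nontrivial step is matching the two possible group-sums $\{0,\veq\}$ required by the uniform-decomposition definition: a generic regrouping of the $x_j$'s could produce many distinct partial sums, which would not give a uniform decomposition. This is precisely why the hypothesis that $B\veg^0$ is a multiple of $\lambda = (6\lambda_0^2 + 2\lambda_0 + 1)!$ is imposed, and why Lemma~\ref{lemma:number} (rather than the standard Steinitz-based merging lemma) is invoked: it guarantees that all subset-sums can be forced into the two-value set $\{0, sgn(B\veg^0)\lambda\}$, which is exactly what the uniform decomposition requires.
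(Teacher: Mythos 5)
Your proof is correct and follows essentially the same approach as the paper: decompose $\veg$ via Lemma~\ref{lemma22} and Theorem~\ref{lemma23} into Graver elements of $H_{\textnormal{com}}^{\textnormal{two-stage}}$, apply Lemma~\ref{lemma:number} to the integer sequence $B\ve\xi_j^0$ using the hypothesis that $B\veg^0$ is a multiple of $\lambda$, and merge each group $T_k$ into $\veeta_k$. Your verification steps (a)--(e) and the edge case $B\veg^0 = 0$ are slightly more explicit than the paper's terse version, but the key lemma invocations and the regrouping argument are identical.
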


Now we are ready to prove our main theorem.
\begin{proof}[Proof of Theorem~\ref{11-n}]
Consider any $\veg\in \ker_{\Z}(H_{\textnormal{com}})$. Clearly $B(\lambda\veg^0)$ is a multiple of $\lambda$, thus by Lemma~\ref{lemma:dec-1}, $\lambda\veg$ admits a uniform decomposition $\lambda\veg = \sum_{j=1}^N\veeta_j$ where {$\|\veeta_j\|_{\infty}\le \eta_{\max}=2^{2^{2^{\OO(t_B^2\log \Delta)}}}$ and every $B\veeta_j^0$ is a multiple of $\lambda$. } 

If this decomposition is not $\omega$-balanced for $\omega\le (\Delta t_D\eta_{\max})^{\OO(s_D^2)}$, then by Lemma~\ref{lemma:balance} we obtain $\veeta\sqsubseteq \lambda\veg$ with $\|\veeta\|_{\infty}\le (\Delta t_D\eta_{\max})^{\OO(s_D^2)}$, $\veeta\in \ker_{\Z}(H_{\textnormal{com}})$ and $B(\lambda\veg^0-\veeta^0)=\ve 0$. $B\veeta^0$ is a multiple of $\lambda$.
Otherwise this decomposition is $\omega$-balanced. By Lemma~\ref{lemma:all-balance}, we can obtain a uniform decomposition $\lambda\veg = \sum_{j=1}^{N'}{\veeta}_j'$ such that $\max_j\|{\veeta}_j'\|\le \omega\eta_{\max}$ and all $\veeta_j'$'s are tier-1.  According to Lemma~\ref{lemma:sub},
if $\lambda\|\veg\|_{\infty}>\tau$ for $\tau=(\omega\Delta\eta_{\max})^{\Delta^{\OO(s_At_A+s_Dt_D)}}=2^{2^{\OO(s_At_A\log\Delta+s_Dt_D\log\Delta)}\cdot 2^{2^{\OO(t_B^2\log\Delta)}}}$, then we are able to find some $\veeta\sqsubseteq \lambda\veg$ such that $H_{\textnormal{com}}\veeta=\ve 0$, $\|\veeta\|_\infty=\OFPT(1)$ and $\veeta^0=\sum_{j\in S}\veeta_j^0$ for some $S\subseteq [N]$. As every $B\veeta_j^0$ is a multiple of $\lambda$, $B\veeta^0$ is also a multiple of $\lambda$. In both cases, we find $\veeta\sqsubseteq \lambda\veg$ where $B\veeta^0$ is a multiple of $\lambda$.

Now consider $\lambda\veg-\veeta$. Obviously $\lambda\veg-\veeta\in \ker_{\Z}(H_{\textnormal{com}})$. It is easy to see $B(\lambda\veg^0-\veeta^0)$ is a multiple of $\lambda$. Thus, if $\|\lambda\veg-\veeta\|_{\infty}> \tau$ we can continue to decompose $\lambda\veg-\veeta$ using our argument above. Observing that $s_A=1$, Theorem~\ref{11-n} is proved. 
\end{proof}
\begin{remark*} Theorem~\ref{11-n} is also true for almost combinatorial 4-block $n$-fold IP. Now $B$ is not a $1\times t_B$ matrix, but rather an $s_B\times t_B$ matrix with rank 1. For such a matrix $B$, we can always transform it into $\bar{B}$, in which the first row is $\ver_1^{\top}$, and all the other rows are $\ve0$. It implies that when $\text{rank}(B)=1$, it is sufficient to consider such a case $B=(\ver_1,\ve 0,\ldots,\ve 0)^{\top}$, where $\ver_1\neq \ve 0$. Then we observe that for any $\vex\in\Z^{t_B}$, $B\vex=(\ver_1\cdot\vex,\ve0,\cdots,\ve0)$. Hence, our argument in the proof above applies directly, i.e., Theorem~\ref{11-n} holds for almost combinatorial 4-block $n$-fold IP (see Appendix~\ref{appsec:almost} for a formal proof). In other words, Theorem~\ref{11-n} and our FPT algorithm for combinatorial 4-block $n$-fold IP  remain true for almost combinatorial 4-block $n$-fold IP. Such a generalization allows submatrices $A_i$'s to contain multiple rows subject to that these rows are  ``local constraints''. 
\end{remark*}

\section{Algorithms}\label{sec:alg}
Using Theorem~\ref{11-n}, we are able to bound the $\ell_\infty$-norm of the Graver basis elements:

\begin{theorem}\label{coro:graver}
Let $\veg\in\G(H_{\textnormal{com}})$ be a Graver basis element, then $\|\ve g\|_\infty= g_{\infty}(H_{\textnormal{com}})$ where $g_{\infty}(H_{\textnormal{com}})\le 2^{2^{\OO(t_A\log\Delta+s_Dt_D\log\Delta)}\cdot 2^{2^{\OO(t_B^2\log\Delta)}}}\cdot n=\OFPT(n).$ 
\end{theorem}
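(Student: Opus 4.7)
The plan is to apply Theorem~\ref{11-n} to $\veg\in\G(H_{\textnormal{com}})$ and exploit its $\sqsubseteq$-minimality via a coordinate-charging argument. Below, I write $\veg_j[i]$ for the $i$-th coordinate of the vector $\veg_j$.

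First, Theorem~\ref{11-n} yields an integer $\lambda=\OFPT(1)$ and a decomposition $\lambda\veg=\veg_1+\cdots+\veg_p$ with each $\veg_j\in\ker_{\Z}(H_{\textnormal{com}})\setminus\{\ve 0\}$, $\veg_j\sqsubseteq\lambda\veg$, and $\|\veg_j\|_\infty\le \bar{\eta}:=2^{2^{\OO(t_A\log\Delta+s_Dt_D\log\Delta)}\cdot 2^{2^{\OO(t_B^2\log\Delta)}}}$. Since $\veg$ is $\sqsubseteq$-minimal in $\ker_{\Z}(H_{\textnormal{com}})\setminus\{\ve 0\}$, any $\veg_j\sqsubseteq\veg$ must equal $\veg$ itself. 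Accordingly, I would partition the indices into $S_{=}=\{j:\veg_j=\veg\}$ and $S_{\ne}=\{j:\veg_j\not\sqsubseteq\veg\}$.

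Next, I would bound $|S_{\ne}|$ by charging. For every $j\in S_{\ne}$ there is an ``exceeding coordinate'' $i_j$ with $|\veg_j[i_j]|>|\veg[i_j]|$; together with $\veg_j\sqsubseteq\lambda\veg$ this forces $\veg[i_j]\ne 0$ and $|\veg[i_j]|\le\bar{\eta}-1$, so $i_j$ belongs to the small-value set $S_{\text{small}}:=\{i:1\le|\veg[i]|\le\bar{\eta}-1\}$, whose cardinality is at most $t_B+nt_A$. For each fixed $i\in S_{\text{small}}$, sign-compatibility of the $\veg_j[i]$ with $\veg[i]$ gives $\sum_j|\veg_j[i]|=\lambda|\veg[i]|$, and each exceeding contribution is at least $|\veg[i]|+1$; hence the number of exceedances at $i$ is at most $\lambda|\veg[i]|/(|\veg[i]|+1)\le\lambda-1$. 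Charging each $j\in S_{\ne}$ to one of its exceeding coordinates then yields $|S_{\ne}|\le(t_B+nt_A)(\lambda-1)$.

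Finally, let $M:=\|\veg\|_\infty$ and pick $i^\star$ with $|\veg[i^\star]|=M$. If $S_{\ne}=\emptyset$, every $\veg_j$ equals $\veg$, so trivially $M\le\bar{\eta}$. Otherwise, $|S_{=}|=\lambda$ is impossible (else $\sum_{j\in S_{\ne}}\veg_j=\ve 0$ would be a vanishing sum of sign-compatible nonzero vectors, contradiction), so $\lambda-|S_{=}|\ge 1$; splitting the identity $\sum_j\veg_j[i^\star]=\lambda\veg[i^\star]$ across the two subsets gives $(\lambda-|S_{=}|)M\le|S_{\ne}|\bar{\eta}$, so $M\le|S_{\ne}|\bar{\eta}\le(t_B+nt_A)(\lambda-1)\bar{\eta}$. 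Since $\lambda\le\bar{\eta}$, this product fits the claimed triply-exponential form multiplied by $n$. The main subtlety is that Theorem~\ref{11-n} only guarantees $\veg_j\sqsubseteq\lambda\veg$ rather than $\sqsubseteq\veg$, so one must show that the ``excess'' of each $\veg_j$ over $\veg$ concentrates on coordinates where $\veg$ itself is small-valued, and these number only $\OO(n)$; once that is noticed, the counting closes itself.
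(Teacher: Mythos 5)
Your proof is correct and uses essentially the same coordinate-charging strategy as the paper's: exploit $\sqsubseteq$-minimality of $\veg$ to find, for each $\veg_j$, a coordinate of exceedance over $\veg$, then bound per-coordinate multiplicities by the budget $\lambda|\veg[i]|$ that sign-compatibility of the $\veg_j$'s with $\lambda\veg$ provides. One small gap to flag: in the final step you conclude $\lambda-|S_{=}|\ge 1$ after ruling out $|S_{=}|=\lambda$, but you must also rule out $|S_{=}|>\lambda$; this follows immediately from $|S_{=}|\cdot M\le\sum_{j}|\veg_j[i^\star]|=\lambda M$ with $M>0$. The paper sidesteps the $S_{=}$/$S_{\ne}$ case split by charging every $j\in[p]$ to a critical coordinate $h_j$ satisfying $|\veg_j[h_j]|\ge|\veg[h_j]|$ (with $\veg[h_j]\ne 0$), which gives $p\le\lambda(t_B+nt_A)$ and hence $\|\veg\|_\infty\le p\bar{\eta}/\lambda\le(t_B+nt_A)\bar{\eta}$; this is a bit cleaner and saves the extra factor of $(\lambda-1)$ in your bound, though both are $\OFPT(n)$ of the same triply-exponential order.
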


Utilizing Theorem~\ref{coro:graver} and the iterative augmentation framework (see Section~\ref{sec:pre}), we are able to prove the following theorem.

\begin{theorem}\label{thm20}
 Consider combinatorial 4-block $n$-fold IP with a separable convex objective function $f$ mapping $\Z^{t_B+nt_A}$ to $\Z$. Let $P$ be the set of feasible integral points, and let $\hat{f}:=\max_{x,y\in P}(f(x)-f(y))$. Then it can be solved in
$2^{2^{\OO(t_A\log\Delta+s_Dt_D\log\Delta)}\cdot 2^{2^{\OO(t_B^2\log\Delta)}}}\cdot n^{4}\hat{L}^2\log^2(\hat{f})=\OFPT(n^4\hat{L}^2 \log^2(\hat{f}))$ time, where $\hat{L}$ denotes the logarithm of the largest number occurring in the input.
\end{theorem}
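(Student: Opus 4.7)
The plan is to combine the $\ell_\infty$-norm bound of Theorem~\ref{coro:graver} with the Graver-best augmentation framework of Lemma~\ref{lemma21}. By Lemma~\ref{lemma21}, starting from any feasible $\vex_0$ the Graver-best augmentation procedure reaches the optimum in at most $(2n-2)\log\hat{f}$ iterations. A feasible starting point can be manufactured in the standard way, by appending $\OFPT(1)$ slack variables to each brick so that an explicit point is feasible and then running the optimizer itself with a modified objective to drive the slacks to zero; this reformulation preserves the combinatorial 4-block $n$-fold structure up to constant blowups of $t_A,t_B,s_D$. Hence the total running time is $(2n-2)\log\hat{f}$ times the cost of a single Graver-best step, and the claim follows once a Graver-best step is shown to take $\OFPT(n^3\hat{L}^2\log\hat{f})$ time.

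For one Graver-best step at a current $\vex$, Theorem~\ref{coro:graver} lets us restrict the search to directions $\veh\in\ker_\Z(H_{\textnormal{com}})$ with $\|\veh\|_\infty=\OFPT(n)$, and by the halving trick the step length $\rho$ can be restricted to the $\OFPT(\hat{L})$ powers of two in $[1,\|\veu-\vel\|_\infty]$. For each fixed $\rho$ we must minimize $f(\vex+\rho\veh)$ over $\veh$ of bounded norm lying in the kernel and obeying the box constraints. Exploiting $s_B=1$, the scalar $q=B\veh^0$ is an integer in a range of size $\OFPT(n)$ and can be enumerated explicitly. Fixing $(\rho,q)$ decouples the second row into local systems $A_i\veh^i=-q$ with $\|\veh^i\|_\infty=\OFPT(n)$ (whose solution sets are precomputed once per block) and leaves only the top-row linking constraint $C\veh^0+\sum_i D_i\veh^i=\ve0$. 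A dynamic program over bricks $i=1,\ldots,n$ with state equal to the partial accumulator $\sum_{j\le i}D_j\veh^j\in\Z^{s_D}$ then locates the optimal $\veh$, and is matched against the $\veh^0$'s compatible with $B\veh^0=q$ and with $C\veh^0$ equal to the required terminal state.

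The main obstacle will be controlling the state space of this DP: naively the $s_D$-dimensional accumulator has magnitude $\OFPT(n^2)$ per coordinate, yielding $\OFPT(n^{2s_D})$ states (XP rather than FPT in $n$). The way around this is a Steinitz-type pruning based on Lemma~\ref{lemma:merging-lemma}: only conformal-minimal trajectories matter for identifying a Graver-best step, and on such trajectories the number of reachable accumulator values is polynomially bounded in $n$ with the exponent absorbed into the FPT constant. Accounting for the $\OFPT(\hat{L})$ step-length sweep, the $\OFPT(\hat{L})$-bit arithmetic per evaluation of the separable convex $f$, and a further $\log\hat{f}$ factor from an outer binary search on the attained improvement then brings one Graver-best step to $\OFPT(n^3\hat{L}^2\log\hat{f})$; multiplying by the $(2n-2)\log\hat{f}$ iteration bound of Lemma~\ref{lemma21} gives the stated $\OFPT(n^4\hat{L}^2\log^2\hat{f})$ total running time.
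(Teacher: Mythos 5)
Your high-level plan matches the paper: use Graver-best augmentation (Lemma~\ref{lemma21}), bound $\|\veh\|_\infty$ by $\OFPT(n)$ via Theorem~\ref{coro:graver}, sweep $\rho$ over $\OO(\hat L)$ powers of two, enumerate the scalar $q=B\veh^0$ over an $\OFPT(n)$ range, and observe that fixing $q$ kills the coupling in the two-stage part. However, there is a genuine gap where you then propose to solve the residual problem by a hand-rolled dynamic program over bricks. The critical observation the paper makes at exactly this point --- and which your proposal misses --- is that once you substitute $B\vey^0=\phi$, the constraint system
$C\vey^0+\sum_i D_i\vey^i=\ve0$, $B\vey^0=\phi$, $A_i\vey^i=-\phi$
has constraint matrix
\[
H_{(1)}=\begin{pmatrix}C & D_1 & \cdots & D_n\\ B & 0 & \cdots & 0\\ 0 & A_1 & & \\ \vdots & & \ddots & \\ 0 & & & A_n\end{pmatrix},
\]
which is a \emph{generalized $n$-fold matrix}. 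So IP$(\rho,\phi)$ can simply be handed to the known FPT $n$-fold solver for separable convex objectives (Eisenbrand et al.), yielding $\OFPT(n^2\hat L\log\hat f)$ per subproblem with no further work. You instead invoke ``Steinitz-type pruning'' via Lemma~\ref{lemma:merging-lemma} to tame the $\OFPT(n^{2s_D})$ accumulator state space, but that step is only named, not argued; it is precisely the hard content of an $n$-fold IP algorithm and cannot be waved through. Relatedly, your placement of the extra $\log\hat f$ factor (``an outer binary search on the attained improvement'') is not where it actually arises --- it comes from the running time of the $n$-fold solver itself --- so your bookkeeping only accidentally lands on $\OFPT(n^3\hat L^2\log\hat f)$ per step. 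The feasibility-via-slack-variables point is fine and standard. To make your argument go through you should either (a) note explicitly that fixing $\phi$ produces an $n$-fold matrix and cite the existing algorithm, as the paper does, or (b) actually prove the state-space bound you are asserting, which amounts to re-deriving the $\ell_1$-norm bound on Graver elements of $H_{(1)}$ and is not a short step.
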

The running time can be improved to $\OFPT(n^{5+o(1)})$ if the objective function is linear. See Appendix~\ref{linear_obj} for the proof.

\section{Applications in Scheduling with High Multiplicity}\label{appli}
It has been shown by Knop and Kouteck{\`y}~\cite{knop2018scheduling} that the classical scheduling problems $R||C_{\max}$ and $R||\sum_{\ell}w_\ell C_\ell$ can be modeled as $n$-fold IPs, based on which FPT algorithms can be developed. However, when we try to model more sophisticated scheduling problems, especially scheduling with rejection $R||C_{\max}+E$ or bicriteria scheduling $R||\theta C_{\max}+\sum_{\ell}w_\ell C_\ell$, we run into 4-block $n$-fold IP. This is because for these problems, $C_{\max}$ needs to be taken as a variable in the IP, while for $R||C_{\max}$ we can use binary search on $C_{\max}$ and hence $n$-fold IP is sufficient.


We formally describe the scheduling problem. Given are $m$ machines and $k$ different types of jobs, with $N_j$ jobs of type $j$. A job of type $j$ has a processing time of $p^i_j\in\Z_{\ge 0}$ if it is processed by machine~$i$. 

For scheduling with rejection $R||C_{\max}+E$, every job of type $j$ also has a rejection cost $u_j$. A job is either processed on one of the machine, or is rejected. The goal is to minimize the makespan $C_{\max}$ plus the total rejection cost $E$. 

\begin{theorem}\label{thmmm}
$R||C_{\max}+E$ can be solved in
 $m^{5+o(1)} 2^{2^{\OO(k^2\log p_{\max})}\cdot 2^{2^{\OO(\log p_{\max})}}}+|I|$ time, where $|I|$ denotes the length of the input.
\end{theorem}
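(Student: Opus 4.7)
The plan is to formulate $R||C_{\max}+E$ as a combinatorial $4$-block $n$-fold IP with $n=m$ (one brick per machine) and then invoke Theorem~\ref{thm20} together with the linear-objective improvement mentioned at the end of Section~\ref{sec:alg}. The critical modeling decision is to keep the top brick as thin as possible: I place only $C_{\max}$ in $\vex^0$ (so $t_B=1$), and for each machine $i\in[m]$ let the $i$-th brick be $\vex^i=(x^i_1,\ldots,x^i_k)$, where $x^i_j$ counts the type-$j$ jobs assigned to machine~$i$ (so $t_A=k$). Since $r_j=N_j-\sum_{i}x^i_j$ is the number of rejected type-$j$ jobs, the objective $C_{\max}+E=C_{\max}+\sum_{j}u_j r_j$ equals, up to the additive constant $\sum_j u_j N_j$, the linear separable function $C_{\max}-\sum_{i,j}u_j x^i_j$; this rewriting is what lets me avoid putting rejection variables into $\vex^0$.

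The linking inequalities $\sum_{i=1}^{m}x^i_j\le N_j$ for $j\in[k]$ form the first row of the constraint matrix with $C=\ve 0\in\Z^{k\times 1}$, $D_i=I_k$ for every $i$, and $s_C=s_D=k$. The local inequalities $\sum_{j=1}^{k}p^i_j x^i_j\le C_{\max}$ give $A_i=(p^i_1,\ldots,p^i_k)$ and $B=(-1)\in\Z^{1\times 1}$, so $s_A=s_B=1$ and $B$ trivially has rank~$1$. This is a combinatorial $4$-block $n$-fold system in inequality form; applying Observation~\ref{obs} converts it to the equality form of~\eqref{ILP:2} by adding slack variables, which increases $t_A$ and $t_D$ to $2k+1$ while leaving $t_B=1$, $s_A=s_B=1$, $s_D=k$, and $\Delta=p_{\max}$ unchanged. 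Polynomial upper bounds $0\le x^i_j\le N_j$ and $0\le C_{\max}\le \sum_{i,j}p^i_j N_j$ give legal $\vel,\veu$.

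Plugging these parameters into Theorem~\ref{thm20}, the exponential prefactor $2^{2^{\OO(t_A\log\Delta+s_Dt_D\log\Delta)}\cdot 2^{2^{\OO(t_B^2\log\Delta)}}}$ becomes $2^{2^{\OO(k^2\log p_{\max})}\cdot 2^{2^{\OO(\log p_{\max})}}}$, since $t_A\log\Delta+s_Dt_D\log\Delta=\OO(k^2\log p_{\max})$ and $t_B^2\log\Delta=\OO(\log p_{\max})$. Because the objective is linear, the polynomial factor in $n$ sharpens to $n^{5+o(1)}=m^{5+o(1)}$ via the improvement cited in Section~\ref{sec:alg}; a single $O(|I|)$ preprocessing pass reads the input and sets up the IP, giving the claimed additive $|I|$ term. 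The only subtle point, and the step I expect to be the main obstacle, is precisely the insistence on $t_B=1$: any seemingly natural alternative that puts the rejection variables $r_j$ (or an auxiliary \lq\lq budget\rq\rq\, variable tying $C_{\max}$ to the rejection cost) into the top brick would blow $t_B$ up to $\Omega(k)$ and replace the inner $2^{2^{\OO(\log p_{\max})}}$ factor by $2^{2^{\OO(k^2\log p_{\max})}}$, no longer matching the statement. Handling rejection implicitly, via the slack of the linking inequality together with the rewritten linear objective above, is what makes the bound come out as claimed.
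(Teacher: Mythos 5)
Your proof is correct and matches the paper's approach: you model $R||C_{\max}+E$ with exactly the same variable layout ($\vex^0=(C_{\max})$ so $t_B=1$, machine bricks $\vex^i=(x^i_1,\dots,x^i_k)$ with $C=\ve 0$, $D_i=I_k$, $B=(-1)$, $A_i=\vep^i_\cdot$), fold the rejection cost into a linear objective over the $x^i_j$'s, and invoke Theorem~\ref{thm:linear} with $n=m$, $t_B=1$, $t_A=t_D=\OO(k)$, $s_D=k$, $\Delta=p_{\max}$ to obtain the stated bound. The only difference from the paper is that you explicitly route the inequality-to-equality conversion through Observation~\ref{obs}, which the paper leaves implicit; this is a clarification rather than a different route.
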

 More precisely, $|I|$ is bounded by $\OO(kp_{\max} (\max\{\log N_{\max}, \log u_{\max}\}))$ where $N_{\max}=\max_j N_j$ and $u_{\max}=\max_j u_j$. One may suspect that the problem can be solved through the generalized $n$-fold IP by guessing out the value of $C_{\max}$. However, this will require $p_{\max}\cdot \max_jN_j$ enumerations. See 
 Appendix~\ref{sche1} for a detailed proof of Theorem~\ref{thmmm}. 
 
For bicriteria scheduling $R||\theta C_{\max}+\sum_{\ell}w_\ell C_\ell$, each job $\ell$ of type $j$ has a weight $w_j$, and the goal is to find an assignment of jobs to machines such that $\theta C_{\max}+\sum_{\ell}w_\ell C_\ell$ is minimized, where $C_\ell$ is the completion time of job $\ell$, and $\theta$ is a fixed input value.

\begin{theorem}\label{them18}
 $R||\theta C_{\max}+\sum_{\ell}w_\ell C_\ell$ can be solved in
 $m^4 2^{2^{\OO(k^2\log p_{\max})}\cdot 2^{2^{\OO(\log p_{\max})}}}|I|^4$ time, where $|I|$ denotes the length of the input.
\end{theorem}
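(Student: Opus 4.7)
The plan is to formulate the problem as an almost combinatorial $4$-block $n$-fold IP with $n:=m$ blocks (one per machine) and then invoke Theorem~\ref{thm20}. We put the single variable $C_{\max}$ in $\vex^0$, so $t_B=1$ and the first column $B$ is automatically of rank~$1$. After reindexing the $k$ job types on each machine $i$ by Smith's rule (nonincreasing $w_j/p_j^i$), the local block $\vex^i$ contains the counts $x_1^i,\ldots,x_k^i$, auxiliary Smith--prefix-load variables $\zeta_1^i,\ldots,\zeta_k^i$ (with the convention $\zeta_0^i:=0$), and a load slack $s^i$. The constraints are the global assignments $\sum_i x_j^i=N_j$ for every $j$ (top row, $s_D=k$), the purely local prefix recursions $\zeta_j^i-\zeta_{j-1}^i-p_j^i x_j^i=0$ for $j=1,\ldots,k$ (absorbed into $A_i$), and the load equality $\zeta_k^i+s^i=C_{\max}$ (which couples to the single nonzero entry $-1$ of $B$). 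After Observation~\ref{obs} turns the makespan inequality into equality, the matrix is an almost combinatorial $4$-block $n$-fold matrix with $t_B=O(1)$, $t_A=t_D=O(k)$, $s_A=s_B=O(k)$, $s_D=k$, and $\Delta=O(p_{\max})$.

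The crux is to express the objective as a separable convex function of the IP variables. The term $\theta C_{\max}$ is linear in the global variable and needs nothing. For $\sum_\ell w_\ell C_\ell$, Smith's rule gives the machine-$i$ contribution
\[
\sum_{j=1}^{k}w_j p_j^i\frac{x_j^i(x_j^i+1)}{2}+\sum_{j=1}^{k}w_j\,x_j^i\,\zeta_{j-1}^i ,
\]
in which the second sum is the only non-separable piece. From $\zeta_j^i=\zeta_{j-1}^i+p_j^i x_j^i$ one gets the identity $2p_j^i x_j^i\zeta_{j-1}^i=(\zeta_j^i)^2-(\zeta_{j-1}^i)^2-(p_j^i x_j^i)^2$, and substituting this in makes the $(x_j^i)^2$ terms cancel exactly against those in the first sum, leaving
\[
\tfrac12\sum_j w_j p_j^i x_j^i+\sum_{j=1}^{k}\frac{w_j}{2p_j^i}\bigl[(\zeta_j^i)^2-(\zeta_{j-1}^i)^2\bigr] .
\]
A single Abel summation then rewrites the second sum as $\tfrac{w_k}{2p_k^i}(\zeta_k^i)^2+\sum_{j=1}^{k-1}\bigl(\tfrac{w_j}{2p_j^i}-\tfrac{w_{j+1}}{2p_{j+1}^i}\bigr)(\zeta_j^i)^2$, whose coefficients are all nonnegative by Smith's inequality $w_j/p_j^i\ge w_{j+1}/p_{j+1}^i$. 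Each $(\zeta_j^i)^2$ is a convex function of a single variable, so the whole objective is now separable convex in $\{C_{\max}\}\cup\{x_j^i,\zeta_j^i,s^i\}$. Multiplying through by the common denominator $2\,p_{\max}!$ clears fractions while only inflating coefficients by an FPT factor.

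The remaining work is routine: $\hat f=\mathrm{poly}(m,N_{\max},p_{\max},w_{\max},\theta)\cdot p_{\max}!$ and $\hat L=O(\log(mN_{\max}p_{\max}w_{\max}\theta))$ are both $O(|I|)$, so plugging the parameter bounds above into Theorem~\ref{thm20} yields running time
\[
2^{2^{\OO(t_A\log\Delta+s_Dt_D\log\Delta)}\cdot 2^{2^{\OO(t_B^2\log\Delta)}}}\cdot m^4\hat L^2\log^2\hat f=m^4\cdot 2^{2^{\OO(k^2\log p_{\max})}\cdot 2^{2^{\OO(\log p_{\max})}}}\cdot |I|^4 ,
\]
which is exactly the bound claimed. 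The main obstacle is spotting the change of variables (prefix-load variables $\zeta_j^i$, followed by the cancellation and the Abel summation) that turns the bilinear Smith cross-term into a separable convex quadratic with nonnegative coefficients; once that is in hand, the entire reduction follows from combining Observation~\ref{obs} with Theorem~\ref{thm20}.
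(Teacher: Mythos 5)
Your proposal is correct and follows essentially the same route as the paper: put $C_{\max}$ in the global brick, introduce prefix-load variables per machine, observe that only the makespan row couples to $C_{\max}$ (so $B$ has a single nonzero row, hence rank $1$), and invoke the almost-combinatorial $4$-block $n$-fold solver with $t_B=1$, $s_A=s_B=O(k)$, $t_A=t_D=O(k)$, $s_D=k$, $\Delta=O(p_{\max})$. The only expository difference is that you rederive the separable-convex reformulation of the Smith-rule objective from first principles (the identity $2p_j^ix_j^i\zeta_{j-1}^i=(\zeta_j^i)^2-(\zeta_{j-1}^i)^2-(p_j^ix_j^i)^2$, the cancellation, and the Abel summation giving nonnegative $\delta$-difference coefficients), whereas the paper simply cites the corresponding Knop--Kouteck\'y corollary and writes $f^i(\vex^i,\vez^i)=\tfrac12\sum_j[(z_j^i)^2(\delta_i(\pi_i(j))-\delta_i(\pi_i(j+1)))+w_jp_j^ix_j^i]$ directly; your derivation is a self-contained proof of that corollary and your comment about clearing denominators with $2p_{\max}!$ addresses integrality of $f$, a point the paper leaves implicit. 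One small redundancy: you introduce the slack $s^i$ explicitly \emph{and} invoke Observation~\ref{obs}; once the load constraint is already written as the equality $\zeta_k^i+s^i=C_{\max}$, Observation~\ref{obs} is not needed.
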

More precisely, $|I|$ is bounded by $\OO(kp_{\max} (\max\{\log N_{\max},\log w_{\max}\}))$ where $N_{\max}=\max_j N_j$, $w_{\max}=\max_j w_j$. See 
 Appendix~\ref{sche2} for a detailed proof of Theorem~\ref{them18}. 

For identical machines, $k\le p_{\max}$ and we obtain FPT algorithms parameterized by $p_{\max}$.

\clearpage
\bibliographystyle{plain}
\bibliography{B_block}

\clearpage

\appendix

\section{Omitted contents in Section~\ref{sec:structure}}
\subsection{Proof of Theorem~\ref{thm:lower-bound}}\label{appsec:lower-bound}
\begin{T3}
 There exists a 4-block $n$-fold IP where $s_B=s_D=1$ such that $\|\veg\|_{\infty}=\Omega(n)$ for some Graver basis element $\veg$.
\end{T3}
\begin{proof}
Consider the $4$-block $n$-fold IP where its constraint matrix is defined by $H_0$ in which $C=(-1,-1,-1)$, $D=(5,3)$, $B=(0,-1,1)$ and $A=(3,4)$. Let $\veg=(\veg^0,\veg^1,\cdots,\veg^n)$ where
$\veg^0=(1,n-1,n)$, and $\veg^1=\veg^2=\cdots=\veg^n=(1,-1)$. 

It is easy to see that $\|\ve g\|_\infty=n$. Meanwhile we have the following:
\begin{eqnarray}
	&&C\veg^0+ D \sum_{i=1}^{n}\veg^i=(-1,-1,-1)\cdot(1,n-1,n)+(5,3)\cdot(n,-n)=0,\nonumber\\	
	&&B\veg^0+ A\veg^i=(0,-1,1)\cdot(1,n-1,n)+(3,4)\cdot(1,-1)=0, \quad\forall 1\le i\le n,\nonumber
\end{eqnarray}
which means that $H_0\veg=\ve0$. 
In what follows, we prove that $\veg$ is a Graver basis element, i.e., there does not exist any non-zero $\veeta\sqsubseteq \veg$ such that $H_0\veeta=\ve0$. 
Towards this, we assume on the contrary that there exists a vector $\veeta\sqsubseteq \veg$  such that $H_0\veeta=\ve0$. Consequently, we have 
\begin{eqnarray}
	&& C\veeta^0+ D \sum_{i=1}^{n}\veeta^i=\ve 0,\label{eq29}\\
 &&B\veeta^0+ A\veeta^i=\ve 0,\quad \forall 1\le i\le n. \label{eq28}
\end{eqnarray}
 
 Let $\veeta^0=(a,b,c)$ and $\veeta^i=(a_i,b_i)$. We first make the following claim.
 \begin{claim}
     $B\veeta^0\neq 0$.
 \end{claim}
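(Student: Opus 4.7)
The plan is to derive a contradiction from $B\veeta^0 = 0$ by exploiting the extremely restricted ranges that the conformal order $\veeta \sqsubseteq \veg$ forces on each coordinate of $\veeta$, together with the arithmetic of the small matrices $A$, $B$, $C$. Concretely, I would first unpack $\veeta \sqsubseteq \veg$ coordinatewise. Because $\veg^0 = (1, n-1, n)$ has all nonnegative entries and $\veg^i = (1,-1)$ for $i\geq 1$, sign-compatibility plus the absolute-value bound give $a \in \{0,1\}$, $b \in [0, n-1]$, $c \in [0, n]$, and $a_i \in \{0,1\}$, $b_i \in \{-1, 0\}$ for every $i$.

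Next, I would assume for contradiction that $B\veeta^0 = -b + c = 0$, i.e., $c = b$. Substituting into~\eqref{eq28} gives $A\veeta^i = 3a_i + 4b_i = 0$ for every $i \in [n]$. Now the main observation is purely arithmetic: over the tiny feasible set $\{0,1\}\times\{-1,0\}$ for $(a_i, b_i)$, the only solution of $3a_i + 4b_i = 0$ is $(a_i, b_i) = (0,0)$, because the other three candidates produce values $3$, $-4$, and $-1$. Hence $\veeta^i = \ve 0$ for all $i \geq 1$.

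Feeding this back into~\eqref{eq29} reduces it to $C\veeta^0 = -a - b - c = 0$, that is, $a + b + c = 0$. Since $a,b,c \geq 0$ by the conformal order, this forces $a = b = c = 0$ and thus $\veeta^0 = \ve 0$. Combined with $\veeta^i = \ve 0$ for $i \geq 1$, this yields $\veeta = \ve 0$, contradicting the assumption that $\veeta$ is a nonzero vector in $\sqsubseteq \veg$ with $H_0 \veeta = \ve 0$. Therefore $B\veeta^0 \neq 0$, proving the claim.

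I do not expect any real obstacle here; the argument is a short case analysis that hinges entirely on two features of the instance: the sign pattern of $\veg$ (which pins each $a_i, b_i$ to a two-element set) and the fact that the equation $3a_i + 4b_i = 0$ has no nontrivial solution within that set. The only point where one must be slightly careful is to use sign-compatibility (not just the magnitude bound) to rule out negative values of $b, c$ in the final step; this is what makes $C\veeta^0 = 0$ collapse to $\veeta^0 = \ve 0$ rather than admitting cancellations.
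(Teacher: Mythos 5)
Your proof is correct and follows the same route as the paper: assume $B\veeta^0=0$, use $\veeta^i \sqsubseteq (1,-1)$ to restrict $(a_i,b_i)$ to $\{0,1\}\times\{0,-1\}$, observe that $3a_i+4b_i=0$ forces $(a_i,b_i)=(0,0)$, substitute into the top-row constraint to get $a+b+c=0$, and use nonnegativity from the conformal order to conclude $\veeta=\ve 0$, a contradiction. The extra remark that $B\veeta^0 = -b+c$ is harmless but unused; otherwise the argument is identical.
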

 \begin{proof}
  Suppose on the contrary that $B\veeta^0=\ve 0$. Given that $\veeta^i=(a_i,b_i)\sqsubseteq (1,-1)$, we get $a_i\in\{0,1\}$ and $b_i\in\{0,-1\}$. By~\eqref{eq28}, $A\veeta^i=3a_i+4b_i=0$. Consequently, we have $a_i=b_i=0$. Plug $\veeta^i=(0,0)$ into the Eq~\eqref{eq29}, we get $C\veeta^0=\ve 0$. That is, $(-1,-1,-1)\cdot \veeta^0=-a-b-c =0$. Since $\veeta^0\sqsubseteq (1,n-1,n)$, we get $a,b,c\ge 0$, implying that $a=b=c=0$. Hence, $\veeta=\ve0$, which is a contradiction. Thus, $B\veeta^0\neq0$ and the claim follows.
 \end{proof}

Recall that $a_i\in\{0,1\},b_i\in\{0,-1\}$ for every $i$. It is easy to see that there are three possibilities regarding the value of $B\veeta^0$: i) $B\veeta^0=4$ if $a_i=0$, $b_i=-1$ for some $i$. ii) $B\veeta^0=-3$ if $a_i=1$, $b_i=0$ for some $i$. iii) $B\veeta^0=1$ if $a_i=1$, $b_i=-1$ for some $i$. Consequently, in each case all $a_i$'s ($b_i$'s) must take the same value, i.e., there are three possibilities regarding the values of $a_i$'s and $b_i$'s:
 \begin{itemize}
 	\item $a_i=0$, $b_i=-1$ for all $i$. Then we have $C\veeta^0+ D \sum_{i=1}^{n}\veeta^i=(-1,-1,-1)\cdot \veeta^0+(5,3)\cdot (0,-n)=-(a+b+c)-3n$. Since $\veeta^0\sqsubseteq (1,n-1,n)$, $a,b,c\ge 0$, whereas $C\veeta^0+ D \sum_{i=1}^{n}\veeta^i<0$, contradicting Eq~\eqref{eq29}. 
 	
 	\item $a_i=1$, $b_i=0$ for all $i$. Then we have $C\veeta^0+ D \sum_{i=1}^{n}\veeta^i=(-1,-1,-1)\cdot \veeta^0+(5,3)\cdot (n,0)=-(a+b+c)+5n$. Using that $\veeta^0\sqsubseteq (1,n-1,n)$, $a\le 1$, $b\le n-1$ and $c\le n$, whereas $C\veeta^0+ D \sum_{i=1}^{n}\veeta^i\ge -2n+5n> 0$, contradicting Eq~\eqref{eq29}. 
 	
    \item $a_i=1$, $b_i=-1$ for all $i$. Then we have $C\veeta^0+ D \sum_{i=1}^{n}\veeta^i=(-1,-1,-1)\cdot \veeta^0+(5,3)\cdot (n,-n)=-(a+b+c)+2n$. Using that $a\in [0,1], b\in [0,n-1]$ and $c\in [0,n]$, $-(a+b+c)+2n=0$ if and only if $a=1,b=n-1$ and $c=n$. Hence, $\veeta^0= (1,n-1,n)$.
 \end{itemize}
 
 The above argument implies that $\veeta=(\veeta^0,\veeta^1,\cdots,\veeta^n)$ where $\veeta^0=(1,n-1,n)$ and $\veeta^i=(1,-1)$ for all $i$, and thus $\veeta=\veg$, implying that $\veg$ is a Graver basis element. Hence, Theorem~\ref{thm:lower-bound} is proved. 
\end{proof}

\subsubsection{Proof of Lemma~\ref{lemma:balance}}\label{appsec:balance}
\begin{T4}
For any $\bar{\veg}\in \ker_{\Z}(H)$, if $\bar{\veg}$ admits a uniform decomposition $\bar{\veg}=\sum_{j=1}^{\bar{N}}\bar{\veeta}_j$ where $\|\bar{\veeta}_j\|_{\infty}\le \bar{\eta}_{\max}$, then there exists ${\veg}\in \ker_{\Z}(H)$ such that ${\veg}\sqsubseteq \bar{\veg}$, $B_i(\bar{\veg}^0-\veg^0)=\ve 0$ for all $i\in [n]$, and ${\veg}$ admits an $\omega$-balanced uniform decomposition for $\omega\le (\Delta t_D \bar{\eta}_{\max})^{\OO(s_D^2)}$. Moreover, if $\bar{\veg}-\veg\neq 0$, then we have $\bar{\veg}-\veg=\veg_1+\veg_2+\cdots+\veg_p$ for some $p\in \mathbb{Z}$ and $\veg_j\in \ker_{\Z}(H)$, and furthermore, $\veg_j\sqsubseteq \bar{\veg}-\veg$ and $\|\veg_j\|_\infty\le (\Delta t_D \bar{\eta}_{\max})^{\OO(s_D^2)}$.
\end{T4}

\begin{proof}
For simplicity let $D_0=C$ and we consider the equation below:
\begin{eqnarray}\label{eq:balance}
0=(D_0,D_1,\cdots,D_n)\sum_{j=1}^{\bar{N}}\bar{\veeta}_j=\sum_{i=0}^n\sum_{j=1}^{\bar{N}}D_i\bar{\veeta}_j^i.
\end{eqnarray}
Obviously each summand on the right side of Eq~\eqref{eq:balance} is an $s_D$-dimensional vector such that $\|D_i\bar{\veeta}_j^i\|_{\infty}\le \Delta t_D \bar{\eta}_{\max}$. We say $D_i\bar{\veeta}_j^i$ is a tier-0 (or tier-1) summand if $\bar{\veeta}_j$ is tier-0 (or tier-1). Consequently, there are $(n+1)\bar{N}_0$ tier-0 summands and $(n+1)\bar{N}_1$ tier-1 summands. According to Lemma~\ref{lemma:merging-lemma}, all the summands can be divided into $m'$ subsets $T_1$, $T_2$, $\cdots$, $T_{m'}$ such that each subset contains at most $u:=(\Delta t_D \bar{\eta}_{\max})^{\OO(s_D^2)}$ summands, and consequently $m'\ge {(n+1)\bar{N}}/u$. It is easy to see that if ${(n+1)\bar{N}}/u>(n+1)\bar{N}_1$ (or equivalently, $\bar{N}_0>(u-1)\bar{N}_1$), then $m'>(n+1)\bar{N}_1$, and by Pigeonhole principle there exists some $T_k$ such that $T_k$ does not contain any tier-1 summand. Consider such $T_k$ and let it contain summands $D_{i_1}\bar{\veeta}_{j_1}^{i_1}$ to $D_{i_u}\bar{\veeta}_{j_u}^{i_u}$ where every $\veeta_{j_\ell}$ is tier-0. 

Now we let $\veeta=(\veeta^0,\cdots,\veeta^n)$ be such that $\veeta^{i}=\sum_{\ell:i_\ell=i}\bar{\veeta}_{j_\ell}^{i_\ell}$ (specifically, ${\veeta}^i=\ve 0$ if $i_\ell\neq i$ for all $\ell$). Then it follows directly that $(D_0,D_1,\cdots,D_n)\veeta=\ve 0$. Furthermore, by the definition of tier-0, for $i_\ell=0$ we have $B_i\bar{\veeta}_{j_\ell}^0=\ve 0$ for all $i\in [n]$, and for any $i_\ell\ge 1$ we have $A_{i_\ell}\bar{\veeta}_j^{i_\ell}=\ve 0$. Hence, $H^{\textnormal{two-stage}}\veeta=\ve 0$. Consequently, $H\veeta=\ve 0$ and $\veeta\sqsubseteq \bar{\veg}$. As $\veeta$ consists of at most $u$ bricks, $\|\veeta\|_{\infty}\le u\eta_{\max}=(\Delta t_D \bar{\eta}_{\max})^{\OO(s_D^2)}$.

To summarize, as long as $\bar{N}_0>(u-1)\bar{N}_1$ for $u=(\Delta t_D \bar{\eta}_{\max})^{\OO(s_D^2)}$ we can find $\veeta\in\ker_{\Z}(H)$ satisfying that $H\veeta=\ve 0$, $\veeta\sqsubseteq \bar{\veg}$ and $\|\veeta\|_{\infty}\le u\eta_{\max}=(\Delta t_D \bar{\eta}_{\max})^{\OO(s_D^2)}$. Hence, we can iteratively apply our argument above to decompose $\bar{\veg}$ until it becomes $(u-1)$-balanced, and Lemma~\ref{lemma:balance} is proved.
\end{proof}

\subsection{Proof of Lemma~\ref{lemma:all-balance}}
\begin{T11}
If ${\veg}=\sum_{j=1}^{\bar{N}}\bar{\veeta}_j$ is an $\omega$-balanced uniform decomposition where $\bar{\eta}_{\max}=\max_{j\in[\bar{N}]}\|\bar{\veeta}_j\|_{\infty}$, then ${\veg}$ admits a uniform decomposition ${\veg}=\sum_{j=1}^{N}{\veeta}_j$ such that every $\veeta_j$ is tier-1, and ${\eta}_{\max}=\max_{j\in[{N}]}\|{\veeta}_j\|_{\infty}\le \omega\bar{\eta}_{\max}$.
\end{T11}
\begin{proof}
Consider the $\omega$-balanced uniform decomposition ${\veg}=\sum_{j=1}^{\bar{N}}\bar{\veeta}_j$ and suppose $(\omega'-1)N_1\le N_0\le \omega'N_1$ for $1\le \omega'\le \omega$. Then we pick $N_0-(\omega'-1)N_1\le N_1$ tier-0 vectors, and merge each of them with a distinct tier-1 vector. By doing so we obtain an exact $(\omega'-1)$-balanced uniform decomposition. Next, we merge each tier-1 vector with exactly $\omega'-1$ distinct tier-0 vectors. Then we obtain a uniform decomposition with only tier-1 vectors. It is easy to see that at most $\omega'\le \omega$ vectors are merged together, and thus the infinity norm increases by at most $\omega$ times. Hence, Lemma~\ref{lemma:all-balance} is true.
\end{proof}

\subsection{A figure in Section~\ref{subsec:notion}}\label{figure}
\begin{figure}[h]
	\centering
	\includegraphics[width=0.95\linewidth]{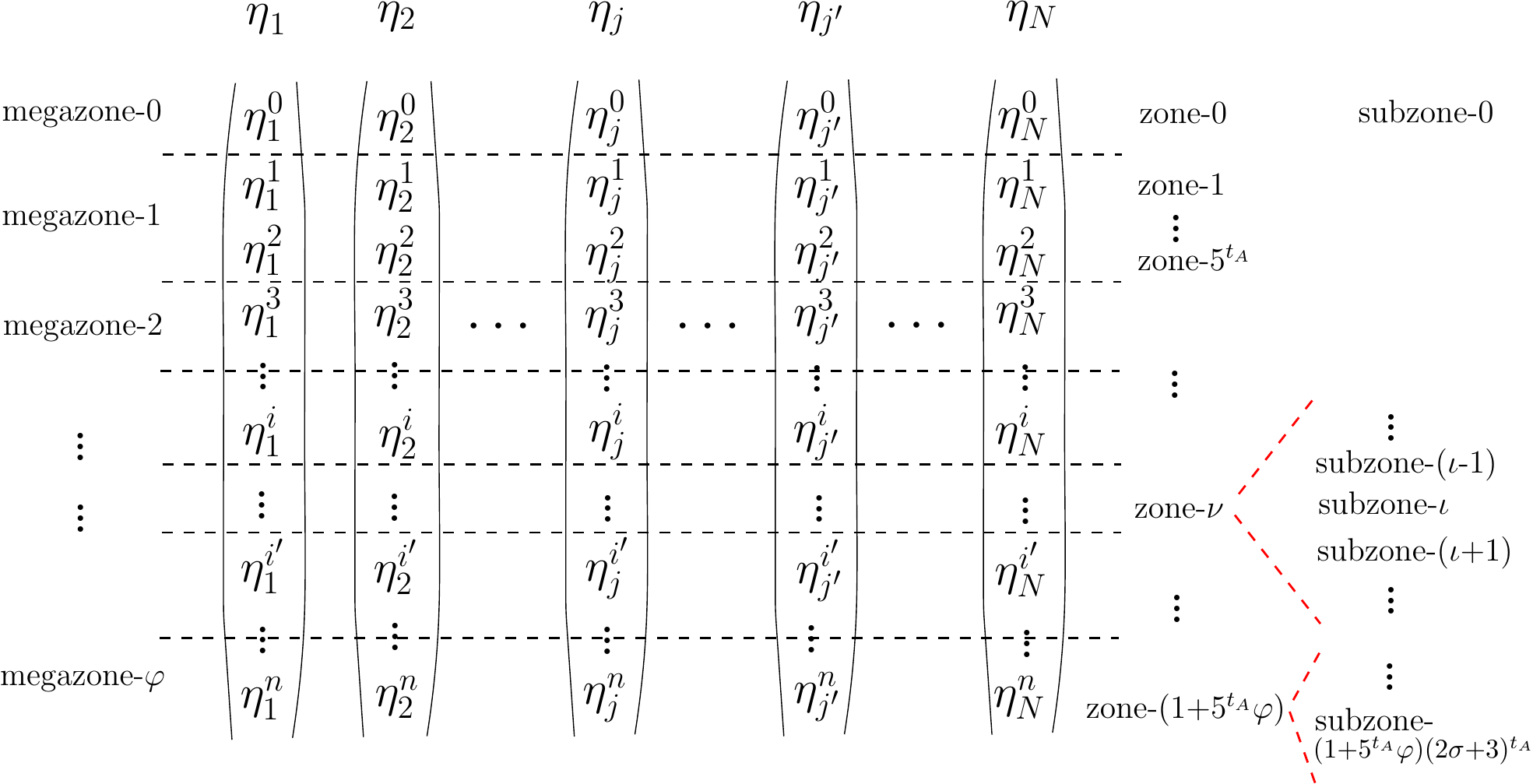}
	\caption{The relationships among megazones, zones and subzones.}
	\label{fig1}
\end{figure}

Notice that two adjacent bricks in the same megazone in a column are not necessarily belonging to the same zone. Two adjacent bricks in the same zone in a column are not necessarily belonging to the same subzone.

\subsection{Proof of Lemma~\ref{lemma:color}}\label{appsec:color}
\begin{T5}
Let $\vex_1,\ldots,\vex_{M}\in \mathbb{Z}^d$ be a sequence of vectors such that $\|\vex_i\|_{\infty}\le \zeta$ for some $\zeta\ge 1$ and every $i=1,\ldots,M$. Furthermore, there are $\mu$ colors, and each vector $\vex_i$ is associated with one color. There are in total $\alpha_j\overline{m}$ vectors of color $j$ where $\alpha_j,\overline{m}\in\Z_{>0}$ and $\sum_{j=1}^{\mu}\alpha_j=\alpha$, $M=\alpha \overline{m}$. Supposing that $\sum_{i=1}^{M}\vex_i=\ve0$ and $M$ is sufficiently large (i.e., $M>(2d\zeta+2\mu\zeta+1)^{d+\mu}\alpha+\alpha+d+\mu$), then among $\vex_1,\cdots,\vex_{M}$ we can find $\alpha_jm$ vectors of each color $j$ such that their summation is $\ve0$, and $m\le (2d\zeta+2\mu\zeta+1)^{d+\mu}$. 
\end{T5}
\begin{proof}
We lift the vectors in $\Z^d$ to $\Z^{d+\mu}$ such that if $\vex_i$ is of color $j$, then it is mapped to $\vey_i=(\vex_i,\vece_j)$ where $\vece_j=(\underbrace{0,0,\cdots,0}_{j-1},1,\underbrace{0,\cdots,0}_{\mu-j})$ is the vector with its $j$-th coordinate being $1$. Given that there are $\alpha_j\overline{m}$ vectors of color $j$, we have:
$$\sum_{i=1}^{M}\vey_i=(\underbrace{0,0,\cdots,0}_{d},\alpha_1\overline{m},\alpha_2\overline{m},\cdots,\alpha_{\mu}\overline{m}).$$
Denote by $\vey$ the right side of the above equation. Note that $\|\ve y_i\|_{\infty}\le \zeta$. Applying the Steinitz Lemma, then there exists a permutation $\pi$ such that for every $\ell\in [M]$ we have
$$\|\sum_{i=1}^\ell \vey_{\pi(i)}-\frac{\ell-d-\mu}{M}\vey\|_{\infty}\le (d+\mu)\zeta.$$
Notice that $M=\alpha \overline{m}$, and we have 
\begin{eqnarray}\label{eq:ex-1}
\frac{\ell-d-\mu}{M}\vey=(\ell-d-\mu)\cdot(\underbrace{0,0,\cdots,0}_{d},\frac{\alpha_1}{\alpha},\frac{\alpha_2}{\alpha},\cdots,\frac{\alpha_{\mu}}{\alpha}),
\end{eqnarray}
and note that if $\ell-d-\mu$ is a multiple of $\alpha$, then the right side is an integral vector. Consider $\ell_k=k\alpha+d+\mu$ for $k\in\Z_{>0}$, so by Eq~\eqref{eq:ex-1} $\vez_k:=\sum_{i=1}^{\ell_k} \vey_{\pi(i)}-\frac{\ell_k-d-\mu}{M}\vey$ is a $(d+\mu)$-dimensional integral vector whose $\ell_{\infty}$-norm is bounded by $(d+\mu)\zeta$. Hence, there are at most $(2d\zeta+2\mu\zeta+1)^{d+\mu}$ distinct $\vez_k$'s, which implies that if $M$ is large enough (and thus induces sufficiently many $\vez_k$'s), 
i.e., $M>((2d\zeta+2\mu\zeta+1)^{d+\mu}+1)\alpha+d+\mu$, then there must exist two integers $k_1,k_2\le (2d\zeta+2\mu\zeta+1)^{d+\mu}+1$ such that $\vez_{k_1}=\vez_{k_2}$, and consequently 
$$\sum_{i=k_1\alpha+d+\mu+1}^{k_2\alpha+d+\mu} \vey_{\pi(i)}=\frac{(k_2-k_1)\alpha}{M}\vey=(k_2-k_1)\cdot(\underbrace{0,0,\cdots,0}_{d},\alpha_1,\alpha_2,\cdots,\alpha_{\mu}).$$
This means, we have found a subset of $\vex_i$'s with at most $(2d\zeta+2\mu\zeta+1)^{d+\mu}\alpha$ vectors which add up to $\ve0$, and furthermore, the total number of vectors of each color $j$ is proportional to $\alpha_j$.   
\end{proof}

\subsection{Proof of Lemma~\ref{lemma:number}}
To prove Lemma~\ref{lemma:number}, we need the following lemma.

\begin{lemma}\label{lemma:number-1}
Let $x_1,x_2,\cdots,x_m\in \Z_{> 0}$ with $x_i\le \zeta$ for $i\in [m]$ and $\sum_{i=1}^m x_i=x$. If $x$ is a multiple of $(\zeta+1)!$, then the $m$ integers can be partitioned into $m'=x/(\zeta+1)!$ subsets $T_1,T_2,\cdots,T_{m'}$ such that $\bigcup_{k=1}^{m'}T_k=[m]$, and $\sum_{i\in T_k}x_i=(\zeta+1)!$ for all $k\in [m']$.
\end{lemma}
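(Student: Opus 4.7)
The plan is to proceed by induction on $m':=x/(\zeta+1)!$; write $N:=(\zeta+1)!$ and $c_v:=|\{i:x_i=v\}|$ for $v\in[\zeta]$, so that $\sum_v v\,c_v = m'N$. The base case $m'=1$ is immediate: take $T_1=[m]$, whose sum is $N$. For the inductive step with $m'\ge 2$, it suffices to extract a single subset $T_1\subseteq[m]$ with $\sum_{i\in T_1}x_i=N$; the induction hypothesis then partitions $[m]\setminus T_1$ (whose elements sum to $(m'-1)N$).

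To produce $T_1$, I would split into two cases. \emph{Case~A (easy):} some value $v\in[\zeta]$ satisfies $c_v\ge N/v$. Since $v\le\zeta$ divides $(\zeta+1)!=N$, the quantity $N/v$ is a positive integer, and $T_1$ consisting of any $N/v$ items of value $v$ gives $\sum_{i\in T_1}x_i = v\cdot(N/v)=N$. \emph{Case~B (bounded regime):} $c_v<N/v$ for every $v$. Then
\[
m'N \;=\; \sum_{v=1}^{\zeta} v\,c_v \;<\; \sum_{v=1}^{\zeta} N \;=\; \zeta N,
\]
forcing $m'\le \zeta-1$. In this case both $m'$ and the multiplicities of each distinct value are bounded by a function of $\zeta$ alone, so the whole instance sits in a bounded regime.

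My plan for Case~B is to first distribute items as evenly as possible across the $m'$ groups, giving each group $\lfloor c_v/m'\rfloor$ or $\lceil c_v/m'\rceil$ copies of every value $v$. The resulting group sums differ from $N$ by at most $\sum_{v=1}^{\zeta} v = \binom{\zeta+1}{2}$. I would then apply a sequence of pairwise item swaps — removing an item $x_i$ from a surplus group and inserting an item $x_j$ from a deficit group, which shifts the two group sums by $\pm(x_j-x_i)$ — to drive all group sums to exactly $N$. The key lever is that $N=(\zeta+1)!$ is divisible by every $v\in[\zeta]$, which guarantees a rich family of decompositions $N=\sum_v v\,a_v$ with $a_v\ge 0$, and hence that the swap moves needed to close any residual gap are always available. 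Convergence can be tracked by a potential such as the total $\ell_1$ deviation $\sum_k |\mathrm{sum}(T_k)-N|$, which every well-chosen swap strictly decreases.

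The hard part will be Case~B. Although the parameters are confined, one has to argue that an appropriate swap sequence always exists and terminates — that is, one never gets stuck at a configuration where every local exchange fails to reduce the total imbalance. An alternative I would fall back on is a sub-induction on the number of distinct values present in the multiset, reducing to the one-value case (handled by Case~A) via sum-preserving replacements such as ``one item of value $v$ behaves like $v$ items of value $1$''; however, lifting a partition in the reduced instance back to the original is a separate piece of bookkeeping, since atomic units that originally belonged to one item must end up in the same part. Either way, the crux is turning the divisibility property $v\mid(\zeta+1)!$ for every $v\in[\zeta]$ into a constructive existence statement in the confined regime left over by Case~A.
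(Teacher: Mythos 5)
Your proposal is incomplete, and you correctly flag where: Case~B is not resolved. The swap argument (\lq\lq distribute evenly, then swap to fix\rq\rq) is plausible but you have not shown that a strictly improving swap is always available — the standard local-search pitfall — nor have you shown termination without such a guarantee. The fallback you mention (sub-induction on the number of distinct values, replacing one item of value $v$ by $v$ items of value $1$) has the lifting obstruction you yourself note: a partition of the refined multiset need not respect the atomic boundaries of the original items, and you give no mechanism to enforce this. As written, the proposal establishes Case~A, the base case $m'=1$, and the correct observation that in Case~B all parameters are bounded by functions of $\zeta$ alone, but it does not close the argument.

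The paper's proof is shorter and sidesteps the case split entirely, and it is worth seeing why, since it isolates the right divisibility fact. Instead of trying to build subsets of sum $(\zeta+1)!$ directly (your Case~A extracts $N/v$ copies of a single value $v$), the paper first packs at the coarser scale $\zeta!$: for each value $j$, since $j \mid \zeta!$, one greedily forms groups of $\zeta!/j$ copies of $j$, each summing to $\zeta!$, leaving at most one leftover per value with sum $j\cdot \mathrm{res}_j < \zeta!$. The total leftover $\sum_j j\cdot\mathrm{res}_j$ is both less than $\zeta\cdot\zeta!$ and (by the hypothesis that $\zeta! \mid (\zeta+1)! \mid x$) a multiple of $\zeta!$, hence equal to $a\zeta!$ for some $0\le a\le\zeta$. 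Merging all leftovers into one block and padding it with $\zeta+1-a$ of the full $\zeta!$-groups produces one part of sum $(\zeta+1)\zeta!=(\zeta+1)!$; the remaining $\zeta!$-groups are a multiple of $\zeta+1$ in number (because $x-(\zeta+1)!$ is a multiple of $(\zeta+1)!$) and are batched $\zeta+1$ at a time. The decisive idea you are missing is to exploit $v\mid\zeta!$ rather than $v\mid(\zeta+1)!$: working one factorial lower makes the residuals so small (at most $\zeta$ of them, each under $\zeta!$) that a single agglomeration step absorbs them, with no iterative rebalancing and no residual \lq\lq bounded regime\rq\rq\ to handle separately.
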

\begin{proof}

Since $x_i$'s can only take at most $\zeta$ distinct values, we let $u_j$ be the total number of $x_i$'s taking the value $j\le \zeta$. We can divide the $u_j$ numbers into $\lceil \frac{ju_j}{\zeta!}\rceil$ groups, with all except 1 group containing $\frac{\zeta!}{j}$ numbers, and one group containing $res_j=u_j-\frac{\zeta!}{j}\lfloor \frac{ju_j}{\zeta!}\rfloor$ numbers. Consequently, we obtain a grouping of $x_i$'s such that there are $\zeta$ groups where the summation of numbers inside is $j\cdot res_j<\zeta!$, together with $h$ other groups where the summation of numbers inside any group is exactly $\zeta!$. Notice that 
$$x=\sum_{j=1}^{\zeta}j\cdot res_j+\zeta!\cdot h,$$
and $x$ is a multiple of $\zeta!$, hence $\sum_{j=1}^{\zeta}j\cdot res_j\le \zeta\cdot\zeta!$ is also a multiple of $\zeta!$, and we let it be $a\zeta!$ for $a\le \zeta$. Agglomerating these $\zeta$ groups, we obtain $h+1$ groups, where the summation of numbers within 1 group (called extra group) is $a\zeta!$ and the summation of numbers within any other group (called regular group) is exactly $\zeta!$. Given that $x=a\zeta!+h\zeta!$ is a multiple of $(\zeta+1)!$, we can further agglomerate the extra group with $\zeta+1-a$ regular groups, that is, $x=[a\zeta!+(\zeta+1-a)\zeta!]+(h-\zeta-1+a)\zeta!$, and then the remaining regular groups are evenly divided into subsets such that each subset contains $\zeta+1$ regular groups. This is possible since $x-(\zeta+1)!$ is a multiple of $(\zeta+1)!$. It is easy to see that now the numbers within every agglomerated group sum up to $(\zeta+1)!$, and Lemma~\ref{lemma:number-1} is proved. 
\end{proof}

Now we are ready to prove Lemma~\ref{lemma:number}.
\begin{T6}
Let $x_1,x_2,\cdots,x_m\in \Z$ and $\zeta\in\Z_{>0}$ be integers such that $|x_i|\le \zeta$ for $i\in [m]$ and $\sum_{i=1}^m x_i=x$. If $x$ is a multiple of $(6\zeta^2+2\zeta+1)!$, then the $m$ integers can be partitioned into $m'$ subsets $T_1,T_2,\cdots,T_{m'}$ such that $\bigcup_{k=1}^{m'}T_k=[m]$, and for all $k\in [m']$ it holds that $|T_k|\le 2^{\OO(\zeta^2\log\zeta)}$, $\sum_{i\in T_k}x_i\in \{0,sgn(x)\cdot (6\zeta^2+2\zeta+1)!\}$ where $sgn$ denotes the standard sign function such that $sgn(x)=1$ if $x>0$, $sgn(x)=-1$ if $x<0$, and $sgn(x)=0$ if $x=0$.
\end{T6}
\begin{proof}
Without loss of generality, we assume $x\ge 0$ (If $x<0$, we simply apply the argument below to the sequence of $-x_i$'s).
Notice that $x_i$'s do not necessarily lie in the same orthant.  We first apply Lemma~\ref{lemma:merging-lemma} to the sequence of $x_i$'s, and obtain a partition of $[m]$ into $m_1$ subsets $T_1',T_2',\cdots,T_{m_1}'$ such that for every $k\in [m_1]$, $\sum_{i\in T_j'}x_i\sqsubseteq x$ and $|T_j'|\le 6\zeta+2$. Let $y_j=\sum_{i\in T_j'}x_i$. If $x=0$, then $y_j=0$ for all $j$ and Lemma~\ref{lemma:number} is proved. Otherwise $x>0$, and it follows that $y_j\ge 0$ for all $j$. Consider all $y_j$'s which are positive. Without loss of generality, let them be $y_1,y_2,\cdots,y_{m_2}$. We know that $y_j>0$ for $j\in [m_2]$, $y_j\le \zeta(6\zeta+2)$ and $\sum_j y_j=x$ where $x$ is a multiple of $(6\zeta^2+2\zeta+1)!$. Applying Lemma~\ref{lemma:number-1}, we can obtain a partition of $[m_2]$ into $m_3$ subsets $T_1'',T_2'',\cdots,T_{m_3}''$ such that $\sum_{j\in T_k''}y_j=(6\zeta^2+2\zeta+1)!$ for all $k\in [m_3]$. Given that $y_j=\sum_{i\in T_j'}x_i$, we let $T_k=\{i:\textnormal{there exists some } j\in T_k'' \textnormal{ such that } i\in T_j'\}$, then it is clear that $\sum_{i\in T_k}x_i=(6\zeta^2+2\zeta+1)!$. Further, $y_j=0$ for $j>m_2$. We simply let $T_j'$ be $T_{j-m_2+m_3}$. Now it is easy to verify that we obtain a partition of $[m]$ into $m'=m_1-m_2+m_3$ subsets $T_k$'s such that $|T_k|\le (6\zeta^2+2\zeta+1)!\cdot (6\zeta+2)=2^{\OO(\zeta^2\log\zeta)}$, and $\sum_{i\in T_k}x_i\in \{0,(6\zeta^2+2\zeta+1)!\}$. Hence, Lemma~\ref{lemma:number} is proved. 
\end{proof}

\subsection{Proof of Lemma~\ref{lemma:dec-1}}
\begin{T7}
Let $\veg\in \ker_{\Z}(H_{\textnormal{com}})$. Let 
\begin{eqnarray*}
\lambda=(6\lambda_0^2+2\lambda_0+1)!=2^{2^{2^{\OO(t_B^2\log \Delta)}}}, \textnormal{ where } \lambda_0:=\Delta t_B g_{\infty}(H_{\textnormal{com}}^{\textnormal{two-stage}})=2^{2^{\OO(t_B^2\log \Delta)}}.
\end{eqnarray*}
If $B\veg^0$ is a multiple of $\lambda$, then $\veg$ admits a uniform decomposition $\veg=\sum_{j=1}^N\veeta_j$ such that $\|\veeta_j\|_{\infty}\le 2^{2^{2^{\OO(t_B^2\log \Delta)}}}$. Furthermore, $B\veeta_j^0$ is a multiple of $\lambda$ for all $j$.
\end{T7}
\begin{proof}
As $\veg\in \ker_{\Z}(H^{\textnormal{two-stage}}_{\textnormal{com}})$, there exists some integer $L'$ and ${\ve\xi}_j$'s such that:
$\ve g={\ve\xi}_1+{\ve\xi}_2+\cdots+{\ve\xi}_{L'},$
where for all $j$ it holds that $H^{\textnormal{two-stage}}_{\textnormal{com}}{\ve\xi}_j=\ve 0$, $\|{\ve\xi}_j\|_\infty\le g_{\infty}(H^{\textnormal{two-stage}}_{\textnormal{com}})$, and ${\ve\xi}_j\sqsubseteq \veg$. Consequently, $B\ve\xi_j^0\le \Delta t_Bg_{\infty}(H^{\textnormal{two-stage}}_{\textnormal{com}})=\lambda_0$. 
Consider the sequence $B{\ve\xi}_1^0, B{\ve\xi}_2^0,\cdots,$ $B{\ve\xi}_{L'}^0$. 
It is clear that $|B{\ve\xi}_j^0|\le \lambda_0$ and $\sum_jB{\ve\xi}_j^0= B\veg^0$ is a multiple of $\lambda$. According to Lemma~\ref{lemma:number}, we can partition $[ L']$ into $m'$ subsets $T_1,T_2$, $\cdots$, $T_{m'}$ such that $\bigcup_{k=1}^{m'}T_k=[m]$, and for all $k\in [m']$ it holds that $|T_k|\le 2^{\OO(\lambda_0^2\log\lambda_0)}$, $\sum_{j\in T_k}B{\ve\xi}^0_j\in \{0, \lambda \cdot sgn( B\veg^0)\}$. Let $\veeta_j=\sum_{j\in T_k}{\ve\xi}_j$. According to the definition in Eq~\eqref{eq5-extra}, we get that $\veg=\sum_{j=1}^{m'}\veeta_j$ is a uniform decomposition. 
\end{proof}

\subsection{Extension of Theorem~\ref{11-n} to almost combinatorial 4-block $n$-fold IP}\label{appsec:almost}
The extension of Theorem~\ref{11-n} to almost combinatorial $4$-block $n$-fold IP is straightforward. For the completeness of the paper, we give the formal proof below. 

When considering such an $s_B\times t_B$ matrix $B$ with rank 1, we can always transform $B$ into $\bar{B}$, in which  the first row is $\ver_1^{\top}$, and all the other rows are $\ve0$. It implies that when $\text{rank}(B)=1$, it is sufficient to consider such a case $B=(\ver_1,\ve 0,\ldots,\ve 0)^{\top}$, where $\ver_1\neq \ve 0$. 

Thus, without loss of generality, we assume that all almost combinatorial $4$-block $n$-fold matrices always have the common feature that $B=(\ver_1,\ve 0,\ldots,\ve 0)^{\top}$, where $\ver_1\neq \ve 0$. From now on we denote by $\tilde{H}_{\textnormal{com}}$ an almost combinatorial $4$-block $n$-fold matrix, and by $\tilde{H}^{\textnormal{two-stage}}$ the two-stage stochastic matrix obtained by removing $(C,D_1,\cdots,D_n)$ from $\tilde{H}_{\textnormal{com}}$.

We first have a similar result to Lemma~\ref{lemma:dec-1}.
\begin{lemma}\label{lemma:dec-2}
Let $\veg\in \ker_{\Z}(\tilde{H}_{\textnormal{com}})$. Let 
\begin{eqnarray*}
\lambda=(6\lambda_0^2+2\lambda_0+1)!=2^{2^{2^{\OO(t_B^2\log \Delta)}}}, \textnormal{ where } \lambda_0:=\Delta t_B g_{\infty}(H_{\textnormal{com}}^{\textnormal{two-stage}})=2^{2^{\OO(t_B^2\log \Delta)}}.
\end{eqnarray*}
Let $B\veg^0=(B\veg^0[1],0,\cdots,0)$. If $B\veg^0[1]$ is a multiple of $\lambda$, then $\veg$ admits a uniform decomposition $\veg=\sum_{j=1}^N\veeta_j$ such that $\|\veeta_j\|_{\infty}\le 2^{2^{2^{\OO(t_B^2\log \Delta)}}}$. Furthermore, $B\veeta_j^0=(B\veg^0_j[1],0,\cdots,0)$ where $B\veg^0_j[1]$ is a multiple of $\lambda$ for all $j$.
\end{lemma}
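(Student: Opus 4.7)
The plan is to mimic the proof of Lemma~\ref{lemma:dec-1} almost verbatim, exploiting the fact that the special form $B=(\ver_1,\ve 0,\ldots,\ve 0)^{\top}$ makes $B\vex$ effectively a scalar. First, since $\veg\in\ker_{\Z}(\tilde{H}_{\textnormal{com}})$, in particular $\veg\in\ker_{\Z}(\tilde{H}^{\textnormal{two-stage}})$. By Theorem~\ref{lemma23} applied to $\tilde{H}^{\textnormal{two-stage}}$ together with Lemma~\ref{lemma22}, there exists a sign-compatible decomposition $\veg=\ve\xi_1+\ve\xi_2+\cdots+\ve\xi_{L'}$ with each $\ve\xi_j\sqsubseteq\veg$, $\tilde{H}^{\textnormal{two-stage}}\ve\xi_j=\ve 0$, and $\|\ve\xi_j\|_\infty\le g_\infty(\tilde{H}^{\textnormal{two-stage}})$. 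Because $B$ has all but its first row zero, for every $j$ we have $B\ve\xi_j^0=(\ver_1\cdot\ve\xi_j^0,0,\ldots,0)$, and the first coordinate satisfies $|\ver_1\cdot\ve\xi_j^0|\le\Delta t_B g_\infty(\tilde{H}^{\textnormal{two-stage}})=\lambda_0$.

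Next, I would consider the scalar sequence $x_j:=\ver_1\cdot\ve\xi_j^0$ for $j\in[L']$. These are integers of absolute value at most $\lambda_0$ whose sum equals $\ver_1\cdot\veg^0=B\veg^0[1]$, which by hypothesis is a multiple of $\lambda=(6\lambda_0^2+2\lambda_0+1)!$. Apply Lemma~\ref{lemma:number} with $\zeta=\lambda_0$ to partition $[L']$ into subsets $T_1,\ldots,T_{m'}$ such that $|T_k|\le 2^{\OO(\lambda_0^2\log\lambda_0)}$ and $\sum_{j\in T_k}x_j\in\{0,\mathrm{sgn}(B\veg^0[1])\cdot\lambda\}$ for each $k$. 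Define $\veeta_k:=\sum_{j\in T_k}\ve\xi_j$ for $k\in[m']$.

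It remains to verify the properties of the uniform decomposition. By construction $\veg=\sum_k\veeta_k$ and each $\veeta_k\sqsubseteq\veg$ since the $\ve\xi_j$'s are sign-compatible with $\veg$. Linearity gives $\tilde{H}^{\textnormal{two-stage}}\veeta_k=\ve 0$. Moreover,
\[
B\veeta_k^0=\Bigl(\sum_{j\in T_k}x_j,0,\ldots,0\Bigr)\in\bigl\{\ve 0,\ \mathrm{sgn}(B\veg^0[1])\cdot(\lambda,0,\ldots,0)\bigr\},
\]
so setting $\veq:=\mathrm{sgn}(B\veg^0[1])\cdot(\lambda,0,\ldots,0)$ we see that each $\veeta_k$ is either tier-$0$ (when the sum is zero) or tier-$1$ (when the sum equals $\mathrm{sgn}(B\veg^0[1])\cdot\lambda$), and the uniformity condition Eq~\eqref{eq5-extra} is satisfied, because $B_i=B$ for every $i$ in the combinatorial case. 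The norm bound follows from $\|\veeta_k\|_\infty\le|T_k|\cdot g_\infty(\tilde{H}^{\textnormal{two-stage}})\le 2^{\OO(\lambda_0^2\log\lambda_0)}\cdot g_\infty(\tilde{H}^{\textnormal{two-stage}})=2^{2^{2^{\OO(t_B^2\log\Delta)}}}$, and the divisibility claim on $B\veeta_k^0[1]$ is immediate from the case analysis above.

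I do not expect any substantive obstacle here; the entire point of the reduction is that the special shape of $B$ collapses the uniform-decomposition condition to a condition on a single integer coordinate, which is exactly the scalar setting already handled by Lemma~\ref{lemma:number}. The only thing to double-check is that we allow the tier-$1$ value $\veq$ in Eq~\eqref{eq5-extra} to be a non-unit multiple; this is fine since the definition only requires a fixed nonzero $\veq\in\Z^{s_B}$ shared across the decomposition, which is what our construction produces.
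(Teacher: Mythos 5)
Your proposal matches the paper's own proof essentially step for step: you take the same sign-compatible decomposition $\veg=\sum_j\ve\xi_j$ from Theorem~\ref{lemma23} together with Lemma~\ref{lemma22}, use the special form $B=(\ver_1,\ve 0,\ldots,\ve 0)^\top$ to reduce $B\ve\xi_j^0$ to the scalar $x_j=\ver_1\cdot\ve\xi_j^0$, apply Lemma~\ref{lemma:number} with $\zeta=\lambda_0$ to group the indices into subsets $T_k$, and define $\veeta_k=\sum_{j\in T_k}\ve\xi_j$; the verification of Eq~\eqref{eq5-extra} and the norm bound coincide with the paper's. No substantive difference.
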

\begin{proof}
As $\veg\in \ker_{\Z}(\tilde{H}^{\textnormal{two-stage}}_{\textnormal{com}})$, there exists some integer $L'$ and ${\ve\xi}_j$'s such that:
$\ve g={\ve\xi}_1+{\ve\xi}_2+\cdots+{\ve\xi}_{L'},$
where for all $j$ it holds that $\tilde{H}^{\textnormal{two-stage}}_{\textnormal{com}}{\ve\xi}_j=\ve 0$, $\|{\ve\xi}_j\|_\infty\le g_{\infty}(\tilde{H}^{\textnormal{two-stage}}_{\textnormal{com}})$, and ${\ve\xi}_j\sqsubseteq \veg$. Consequently, $B\ve\xi_j^0=(B\ve\xi_j^0[1],0,\cdots,0)$ where $B\ve\xi_j^0[1]\le \Delta t_Bg_{\infty}(\tilde{H}^{\textnormal{two-stage}}_{\textnormal{com}})=\lambda_0$. 
Consider the sequence $B{\ve\xi}_1^0[1], B{\ve\xi}_2^0[1],\cdots,$ $B{\ve\xi}_{L'}^0[1]$. 
It is clear that $|B{\ve\xi}_j^0[1]|\le \lambda_0$ and $\sum_jB{\ve\xi}_j^0[1]= B\veg^0[1]$ is a multiple of $\lambda$. According to Lemma~\ref{lemma:number}, we can partition $[ L']$ into $m'$ subsets $T_1,T_2$, $\cdots$, $T_{m'}$ such that $\bigcup_{k=1}^{m'}T_k=[m]$, and for all $k\in [m']$ it holds that $|T_k|\le 2^{\OO(\lambda_0^2\log\lambda_0)}$, $\sum_{j\in T_k}B{\ve\xi}^0_j[1]\in \{0, \lambda \cdot sgn( B\veg^0)\}$. Let $\veeta_j=\sum_{j\in T_k}{\ve\xi}_j$. According to the definition in Eq~\eqref{eq5-extra}, we get that $\veg=\sum_{j=1}^{m'}\veeta_j$ is a uniform decomposition. 
\end{proof}

\begin{theorem}\label{almost-11-n}
 Let $\tilde{H}_{\textnormal{com}}$ be an almost combinatorial $4$-block $n$-fold matrix. Then there exists a positive integer $\lambda\le 2^{2^{2^{\OO(t_B^2\log (t_B\Delta))}}}$ (which is only dependent on $t_B$ and $\Delta$) such that for any $\ve g\in\ker_{\Z}(H_{\textnormal{com}})$, we have $\lambda\veg=\veg_1+\veg_2+\cdots+\veg_p$ for some $p\in \mathbb{Z}_{>0}$ and $\veg_j\in \ker_{\Z}(\tilde{H}_{\textnormal{com}})$, and furthermore, $\veg_j\sqsubseteq \lambda\veg$ and $\|\veg_j\|_\infty=2^{2^{\OO(s_At_A\log\Delta+s_Dt_D\log\Delta)}\cdot 2^{2^{\OO(t_B^2\log\Delta)}}}$.
\end{theorem}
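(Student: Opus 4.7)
The plan is to mimic the proof of Theorem~\ref{11-n} essentially verbatim, isolating the two places where the rank-$1$ condition on $B$ is used and handling them separately. First I would reduce to the canonical form $B=(\ver_1,\ve 0,\ldots,\ve 0)^{\top}$ with $\ver_1\neq \ve 0$, as described in the remark at the end of Section~\ref{subsec:thm-proof}. Since $\mathrm{rank}(B)=1$, there is a unimodular row transformation of each block row of $\tilde{H}_{\textnormal{com}}$ bringing $B$ to this shape while preserving $\ker_{\Z}(\tilde{H}_{\textnormal{com}})$ and keeping the entries of the $A_i$'s and $B$ bounded by an FPT function of $\Delta$ and $s_B$; equivalently one may simply assume WLOG that $B$ is already in this form. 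In this form, for every $\vex\in\Z^{t_B}$ we have $B\vex=(\ver_1\cdot\vex,0,\ldots,0)^{\top}$, so only a single integer coordinate of $B\veeta_j^0$ is ever nonzero. Consequently the tier-0/tier-1 dichotomy of Section~\ref{sec:structure} is controlled by the single integer $B\veeta_j^0[1]$, exactly as in the combinatorial case.

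Next I would replay the pipeline of Section~\ref{subsec:thm-proof}. Pick $\lambda=(6\lambda_0^{2}+2\lambda_0+1)!$ with $\lambda_0=\Delta t_B g_{\infty}(\tilde{H}^{\textnormal{two-stage}})$ as in Lemma~\ref{lemma:dec-1}. For any $\veg\in\ker_{\Z}(\tilde{H}_{\textnormal{com}})$, the first coordinate $B(\lambda\veg^0)[1]=\lambda\,B\veg^0[1]$ is trivially a multiple of $\lambda$, so Lemma~\ref{lemma:dec-2} yields a uniform decomposition $\lambda\veg=\sum_{j=1}^{N}\veeta_j$ with $\|\veeta_j\|_{\infty}\le 2^{2^{2^{\OO(t_B^2\log\Delta)}}}$ and with every $B\veeta_j^0[1]$ a multiple of $\lambda$. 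I would then apply Lemma~\ref{lemma:balance} and Lemma~\ref{lemma:all-balance}, which are already stated for the generalized $H$ and therefore apply unchanged to $\tilde{H}_{\textnormal{com}}$, to either strip off a sign-compatible element and recurse or obtain an $\omega$-balanced, tier-1-only uniform decomposition with $\omega\le(\Delta t_D\eta_{\max})^{\OO(s_D^2)}$. Finally, Lemma~\ref{lemma:sub} (again a general-$H$ statement) produces, whenever $\|\lambda\veg\|_{\infty}>\tau$ for $\tau=(\omega\Delta\eta_{\max})^{\Delta^{\OO(s_At_A+s_Dt_D)}}$, an $\veeta\sqsubseteq\lambda\veg$ in $\ker_{\Z}(\tilde{H}_{\textnormal{com}})$ with $\|\veeta\|_{\infty}\le\tau$ and with $\veeta^0=\sum_{j\in S}\veeta_j^0$ for some $S\subseteq[N]$.

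The only new bookkeeping is that the iterative stripping preserves the divisibility hypothesis: since $\veeta^0$ is a sum of some of the $\veeta_j^0$'s, $B\veeta^0[1]$ is a multiple of $\lambda$, and therefore $B(\lambda\veg^0-\veeta^0)[1]$ is also a multiple of $\lambda$, so Lemma~\ref{lemma:dec-2} applies again to $\lambda\veg-\veeta$ and the recursion terminates with the claimed decomposition. The infinity-norm bound changes from that in Theorem~\ref{11-n} only by replacing $t_A$ by $s_At_A$ in the innermost exponent, which comes directly from the $\Delta^{\OO(s_At_A+s_Dt_D)}$ count of distinct pairs $(A_i,D_i)$ in the hierarchical argument of Section~\ref{subsec:notion}. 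The main subtlety I anticipate is verifying that allowing $s_A>1$ does not break the cross-position construction of $\veeta$: a tier-1 brick now satisfies $A_i\veeta_j^i=-\veq$ with $\veq\in\Z^{s_B}$, but since $\veq=(q,0,\ldots,0)^{\top}$ has only one degree of freedom, the matching argument $B_i\veeta_{j_1}^0+A_i\veeta_{j_2}^{i'}=\ve 0$ for indices $i,i'$ in the same zone still goes through unchanged, so Observation~\ref{obs:prop-i} and the good/bad-group analysis of Section~\ref{subsec:prop-3} carry over without modification.
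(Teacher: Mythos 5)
Your proposal is correct and follows essentially the same approach as the paper's proof in Appendix~\ref{appsec:almost}: reduce $B$ to the canonical rank-one form $(\ver_1,\ve 0,\ldots,\ve 0)^{\top}$, establish the almost-combinatorial analogue of Lemma~\ref{lemma:dec-1} (the paper's Lemma~\ref{lemma:dec-2}) using the fact that only $B\veeta_j^0[1]$ is nonzero, and then replay the proof of Theorem~\ref{11-n} via Lemmas~\ref{lemma:balance}, \ref{lemma:all-balance}, and \ref{lemma:sub}, which are stated for the generalized matrix $H$ and therefore carry over unchanged. Your closing observation that the tier structure with $\veq=(q,0,\ldots,0)^{\top}$ has a single degree of freedom, so the cross-position matching in Section~\ref{subsec:prop-1} survives $s_A>1$, is exactly why the paper can invoke the general-$H$ lemmas without modification.
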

\begin{proof}
Consider any $\veg\in \ker_{\Z}(\tilde{H}_{\textnormal{com}})$. Clearly $B(\lambda\veg^0)=\lambda B\veg^0=(\lambda B\veg^0[1],0,\cdots,0)$ where $\lambda B\veg^0[1]$ is a multiple of $\lambda$, thus by Lemma~\ref{lemma:dec-2}, $\lambda\veg$ admits a uniform decomposition $\lambda\veg = \sum_{j=1}^N\veeta_j$ where {$\|\veeta_j\|_{\infty}\le \eta_{\max}=2^{2^{2^{\OO(t_B^2\log \Delta)}}}$ and every $B\veeta_j^0=(B\veeta_j^0[1],0,\cdots,0)$ where $B\veeta_j^0[1]$ is a multiple of $\lambda$. } 

If this decomposition is not $\omega$-balanced for $\omega\le (\Delta t_D\eta_{\max})^{\OO(s_D^2)}$, then by Lemma~\ref{lemma:balance} we obtain $\veeta\sqsubseteq \lambda\veg$ with $\|\veeta\|_{\infty}\le (\Delta t_D\eta_{\max})^{\OO(s_D^2)}$, $\veeta\in \ker_{\Z}(\tilde{H}_{\textnormal{com}})$ and $B(\lambda\veg^0-\veeta^0)=\ve 0$. $B\veeta^0[1]$ is a multiple of $\lambda$.
Otherwise this decomposition is $\omega$-balanced. By Lemma~\ref{lemma:all-balance}, we can obtain a uniform decomposition $\lambda\veg = \sum_{j=1}^{N'}{\veeta}_j'$ such that $\max_j\|{\veeta}_j'\|\le \omega\eta_{\max}$ and all $\veeta_j'$'s are tier-1.  According to Lemma~\ref{lemma:sub},
if $\lambda\|\veg\|_{\infty}>\tau$ for $\tau=(\omega\Delta\eta_{\max})^{\Delta^{\OO(s_At_A+s_Dt_D)}}=2^{2^{\OO(s_At_A\log\Delta+s_Dt_D\log\Delta)}\cdot 2^{2^{\OO(t_B^2\log\Delta)}}}$, then we are able to find some $\veeta\sqsubseteq \lambda\veg$ such that $\tilde{H}_{\textnormal{com}}\veeta=\ve 0$, $\|\veeta\|_\infty=\OFPT(1)$ and $\veeta^0=\sum_{j\in S}\veeta_j^0$ for some $S\subseteq [N]$. As every $B\veeta_j^0=(B\veeta_j^0[1],0\cdots,0)$ satisfies that $B\veeta_j^0[1]$ is a multiple of $\lambda$, we know $B\veeta^0=(B\veeta^0[1],0,\cdots,0)$  where $B\veeta^0[1]$ is also a multiple of $\lambda$. In both cases, we find $\veeta\sqsubseteq \lambda\veg$ where $B\veeta^0=(B\veeta^0[1],0,\cdots,0)$, and $B\veeta^0[1]$ is a multiple of $\lambda$.

Now consider $\lambda\veg-\veeta$. Obviously $\lambda\veg-\veeta\in \ker_{\Z}(\tilde{H}_{\textnormal{com}})$. It is easy to see $B(\lambda\veg^0-\veeta^0)=(x,0,\cdots,0)$ where $x$ is a multiple of $\lambda$. Thus, if $\|\lambda\veg-\veeta\|_{\infty}> \tau$ we can continue to decompose $\lambda\veg-\veeta$ using our argument above. Hence, Theorem~\ref{almost-11-n} is proved. 
\end{proof}

\section{Omitted contents in Section~\ref{sec:alg}}\label{appsec:alg}
The goal of this section is to develop algorithms for combinatorial 4-block $n$-fold IP. Towards this, we first bound the infinity norm of Graver basis elements.

\begin{T8}
Let $\veg\in\G(H_{\textnormal{com}})$ be a Graver basis element, then $\|\ve g\|_\infty= g_{\infty}(H_{\textnormal{com}})$ where $g_{\infty}(H_{\textnormal{com}})\le 2^{2^{\OO(t_A\log\Delta+s_Dt_D\log\Delta)}\cdot 2^{2^{\OO(t_B^2\log\Delta)}}}\cdot n=\OFPT(n).$
\end{T8}
\begin{proof}
According to Theorem~\ref{11-n}, we know for $\lambda=\OFPT(1)$ there exist $\veg_j\in\ker_{\Z}(H_{\textnormal{com}})$ such that $\lambda\veg=\sum_{j=1}^p \veg_j$, $\veg_j\sqsubseteq\lambda \veg$ and  $\|\veg_j\|_\infty=\OFPT(1)$.
To show $\|\veg\|_{\infty}=\OFPT(n)$, it suffices to show that $p=\OFPT(n)$. Note that if any $\veg_j\sqsubset \veg$, then it will violate the fact that $\veg$ is a Graver basis element. Let $\vex[h]$ denote the $h$-th coordinate of a vector $\vex$. We know $\veg_j\not\sqsubset \veg$ implies that there exists some $h_j$-th coordinate such that $|\veg_j[h_j]|\ge |\veg[h_j]|$, and we call $h_j$ as the critical coordinate of $\veg_j$. If there are multiple critical coordinates, we pick an arbitrary one. Now we have a list of critical coordinates $h_1,h_2,\cdots,h_p$ where $1\le h_j\le t_B+nt_A$. We claim that every index $k\in [t_B+nt_A]$ can occur at most $\lambda$ times in the list. Supposing on the contrary some index $k$ appears $\lambda+1$ or more times, then there exist $\veg_{j_1},\veg_{j_2},\cdots,\veg_{j_{\lambda+1}}$ where everyone's $k$-th coordinate has an absolute value no less than $|\veg[k]|$. However, $\sum_{\ell=1}^{\lambda+1}\veg_{j_{\ell}}\sqsubseteq \lambda\veg$ implies that the summation of the absolute value of their $k$-th coordinates is bounded by $|\lambda\veg[k]|$, which is a contradiction. Hence, every index occurs at most $\lambda$ times in the list, implying that $p\le \lambda(t_B+nt_A)$. Hence, Theorem~\ref{coro:graver} is proved. More precisely, $$g_{\infty}(H_{\textnormal{com}})\le 2^{2^{\OO(t_A\log\Delta+s_Dt_D\log\Delta)}\cdot 2^{2^{\OO(t_B^2\log\Delta)}}}\cdot n.$$
\end{proof}
\begin{remark*}
As Theorem~\ref{11-n} remains true for almost combinatorial $4$-block $n$-fold IP where $\text{rank}(B)=1$, Theorem~\ref{coro:graver} is also true for almost combinatorial $4$-block $n$-fold IP. Denote by $\tilde{H}_{\textnormal{com}}$ the constraint matrix of almost combinatorial $4$-block $n$-fold IP, and denote by $g_{\infty}(\tilde{H}_{\textnormal{com}})$ the upper bound on its Graver basis elements, then we have
$$g_{\infty}(\tilde{H}_{\textnormal{com}})\le 2^{2^{\OO(s_At_A\log\Delta+s_Dt_D\log\Delta)}\cdot 2^{2^{\OO(t_B^2\log\Delta)}}}\cdot n.$$
\end{remark*}

Now we are ready to design FPT algorithms for combinatorial 4-block $n$-fold IP using the iterative augmentation framework.

\subsection{Linear Objective Functions}\label{linear_obj}
\begin{theorem}\label{thm:linear}
Combinatorial 4-block $n$-fold IP with a linear objective function $f(\vex)=\vew\vex$ can be solved in time: $$ 2^{2^{\OO(t_A\log\Delta+s_Dt_D\log\Delta)}\cdot 2^{2^{\OO(t_B^2\log\Delta)}}}\cdot n^{5+o(1)}=\OFPT(n^{5+o(1)}).$$
\end{theorem}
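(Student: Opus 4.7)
The plan is to reuse the Graver-best augmentation framework (Lemma~\ref{lemma21}) together with the $\OFPT(n)$ bound on the infinity norm of the Graver basis (Theorem~\ref{coro:graver}), but to exploit linearity in order to eliminate the $\hat{L}^2\log^2(\hat{f})$ overhead that appears in Theorem~\ref{thm20}. At the top level I would run Graver-best augmentation, but each step will be implemented by solving an auxiliary IP of the same block structure whose variables are restricted to the Graver bounding box $[-g_\infty(H_{\textnormal{com}}),g_\infty(H_{\textnormal{com}})]=[-\OFPT(n),\OFPT(n)]$.

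First I would obtain an initial feasible integer solution via a proximity argument: solve the LP relaxation of $\IP$ and round it using the standard Graver-proximity technique, which guarantees an integer feasible point within $\ell_1$-distance $\OFPT(n)\cdot g_\infty(H_{\textnormal{com}})=\OFPT(n^2)$ of some integer optimum; together with $\|\vew\|_\infty\le 2^{\hat L}$ this gives $\hat f\le 2^{\hat L}\cdot \OFPT(n^2)$, so $\log\hat f=\OO(\hat L+\log n)$. Using the halving trick (best halving step in place of Graver-best step, which changes the objective by a constant factor per phase), the number of outer iterations needed to reach the optimum is $\OFPT(n\log\hat f)$; and because the objective is linear, the $\log\hat f$ factor contributed by halving can be collapsed into the implementation of the step oracle itself (one binary search on the step length replaces the $\log^2\hat f$ in Theorem~\ref{thm20} by $\log\hat f$, and the oracle call itself does not depend on $\hat L$). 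Together this yields $\OFPT(n\log n)$ outer augmentations with no explicit $\hat L$ or $\hat f$ dependence appearing elsewhere.

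Each augmentation step is the bulk of the work: given the current $\vex$, find $\veh\in\Graver(H_{\textnormal{com}})$ and $\rho\in\Z$ minimizing $\vew(\rho\veh)$ subject to $\vel\le\vex+\rho\veh\le\veu$. Because $\|\veh\|_\infty\le g_\infty(H_{\textnormal{com}})=\OFPT(n)$ (Theorem~\ref{coro:graver}), this is itself a combinatorial 4-block $n$-fold IP on the same constraint matrix $H_{\textnormal{com}}$ but with a box of width $\OFPT(n)$ on the variables and a linear objective. I would solve this inner instance by peeling off the first row and first column of $H_{\textnormal{com}}$ (the rows/columns that carry $C$, $D_i$ and the rank-one $B$) by an enumeration over their $\OFPT(n^{\,\OO(1)})$ many equivalence classes of contributions, and then invoking a near-linear-time solver for generalized $n$-fold IP (e.g., Cslovjecsek et al., used as a black box) on each residual instance, which takes $(nt_A)^{1+o(1)}\cdot \OFPT(1)$ time per call. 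A careful accounting gives the inner oracle an amortized running time of $\OFPT(n^{4+o(1)})$. Multiplying by the $\OFPT(n\log n)$ outer iterations gives the target bound $\OFPT(n^{5+o(1)})$.

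The main obstacle is the inner oracle: to hit exponent $5{+}o(1)$ rather than something worse, one must avoid paying any factor of $\hat L$ or $\log\hat f$ in each call and must exploit the near-linear-time $n$-fold subroutine correctly. The delicate point is that, after fixing the first brick $\vex^0$, the remaining $n$-fold instance has right-hand sides and upper bounds of magnitude $\OFPT(n)$, which is still polynomial in $n$; so the $n$-fold solver's input size (and consequently its running time) picks up only an $\tilde\OO(\log n)$ factor, absorbed in the $n^{o(1)}$. A secondary issue is that the first-brick enumeration cannot be done naively: one must use the uniform-decomposition structure of Theorem~\ref{11-n} to restrict $\vex^0$ to an FPT-sized collection of residue classes modulo $\lambda$, which keeps the enumeration cost polynomial in $n$ and avoids blowing up the overall exponent.
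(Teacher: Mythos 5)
Your high-level plan — run Graver-best augmentation, bound each Graver step by Theorem~\ref{coro:graver}, and implement the step oracle by decoupling the first brick and then calling a near-linear-time generalized $n$-fold solver — is pointing in the right direction, and is in spirit what the paper does. However, there is a genuine gap in how you propose to decouple the first brick, and it is exactly the crux of the paper's argument.

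You suggest enumerating $\vex^0$ (or $\vey^0$) only up to residue classes modulo $\lambda$, invoking the uniform-decomposition structure of Theorem~\ref{11-n}. This does not work: the coupling constraint in $H_{\textnormal{com}}$ is $B\vey^0 + A_i\vey^i = 0$ for every $i$, and to turn this into $n$ independent constraints $A_i\vey^i = -\phi$ you need to know the \emph{exact} integer $\phi = B\vey^0$, not merely its residue class modulo $\lambda$. Knowing $\vey^0 \bmod \lambda$ gives you $\phi \bmod (\text{something})$ at best, which does not determine the right-hand side of the brick constraints. Also, there is no a priori reason that only FPT-many residue classes are relevant. The correct observation, which the paper uses and which crucially relies on $s_B = 1$ (the combinatorial case), is that $\phi = B\vey^0$ is a single integer, and since any Graver step satisfies $\|\vey\|_\infty \le g_\infty(H_{\textnormal{com}}) = \OFPT(n)$ by Theorem~\ref{coro:graver}, one has $|\phi| \le t_B\Delta\, g_\infty(H_{\textnormal{com}}) = \OFPT(n)$. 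Thus one enumerates $\phi$ over an $\OFPT(n)$-sized range, and for each fixed $\phi$ the step IP becomes a generalized $n$-fold IP with constraint matrix having top rows $(C, D_1,\ldots,D_n)$, a global row $(B,0,\ldots,0)$, and diagonal blocks $A_i$ with right-hand side $-\phi$. This is the matrix $H_{(1)}$ in the paper, solvable by Cslovjecsek et al.\ in time $\OFPT((nt_A)^{1+o(1)})$. Your enumeration over ``residue classes mod $\lambda$'' never produces this decoupled $n$-fold structure, and enumerating $\vex^0$ itself over the Graver box would cost $\OFPT(n^{t_B})$, not $\OFPT(n^{\OO(1)})$ with a controlled exponent.

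Two secondary points: first, your iteration bound $\OFPT(n\log n)$ via a proximity initialization plus a collapsed halving trick is plausible but unproved and deviates from the paper's accounting, which instead uses $\OO(n^{2+o(1)})$ augmentations (via the Tardos-style preprocessing of Kouteck\'y et al.\ bounding all entries by $2^{\OO(n\log n)}\Delta^{\OO(n)}$, so $h = \OO(n^{1+o(1)}\log\Delta)$ choices of $\rho$). Second, your claimed ``amortized $\OFPT(n^{4+o(1)})$'' inner oracle time is an unsupported placeholder; in the paper it is the explicit product of $\OO(n^{1+o(1)})$ choices of $\rho$, $\OFPT(n)$ choices of $\phi$, and $\OFPT(n^{1+o(1)})$ per $n$-fold solve. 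Making the $\phi$-enumeration precise is what you need to close the gap.
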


\begin{proof}
Utilizing the idea of approximate Graver-best oracle introduced by Altmanov{\'a} et al.~\cite{altmanova2019evaluating} and implicitly by Eisenbrand et al.~\cite{eisenbrand2018fastera}, it is sufficient that for every $\rho=2^0,2^1,2^2,\cdots,2^{h}$ where $h=\OO(n^{1+o(1)}\log \Delta)$,\footnote{Here $h\le \log \|\veu-\vel\|_{\infty}$. However, utilizing the techniques of Tardos~\cite{tardos1986strongly}, Koutecký et al.~\cite{koutecky2018parameterized} showed that without loss of generality $\|\veb\|_\infty, \|\vel\|_\infty, \|\veu\|_\infty\le 2^{\OO(n\log n)}\Delta^{\OO(n)}$.} we find out an augmentation of the form $\rho\vey$ which is no worse than $\rho\veg$ for any Graver basis element $\veg$ (i.e., $\rho\vey$ gives an improvement to the objective value larger than or equal to any $\rho\veg$). Observing that $B\veg^0\in \Z$ and $|B\veg^0|\le t_B\Delta g_{\infty}(H_{\textnormal{com}})$ for every Graver basis element, we  consider the following IP$(\rho,\phi)$ for every fixed $\rho$ and $\phi\in [-t_B\Delta g_{\infty}(H_{\textnormal{com}}): t_B\Delta g_{\infty}(H_{\textnormal{com}})]$:
 \begin{eqnarray}\label{eq34}
 	\min\{\vew\cdot \vey: H_{\textnormal{com}} \vey=\ve0, \vel\le \vex_0+\rho \vey \le \veu, B\vey^0=\phi, \vey\in \Z^{t_B+nt_A} \}.
 \end{eqnarray}
 It is clear that the optimal solution $\vey^*(\rho,\phi)$ to IP~\eqref{eq34} is no worse than $\rho\veg$ for any Graver basis element satisfying that $B\veg^0=\phi$. Taking the best solution out of all $\vey^*(\rho,\phi)$ gives the desired augmentation. 
 
 We write down explicitly the constraints of IP$(\rho,\phi)$ as follows: 
\begin{eqnarray*}
	&& C\vey^0+  \sum_{i=1}^{n}D_i\vey^i=\ve 0\\
	&& B\vey^0=\phi\\
	&& A_i\vey^i=-\phi,\quad \forall 1\le i\le n 
\end{eqnarray*}
The constraint matrix $H_{(1)}$ is as follows:  
\begin{eqnarray*}
	H_{(1)}=
	\begin{pmatrix}
		C & D_1 & D_2 & \cdots & D_n \\
		B & 0 & 0  &  \cdots &  0 \\
		0 & A_1  &  &   &   \\
		0 &   &  A_2 &  &   \\
		\vdots &   &   & \ddots &   \\
		0 &   &   &   & A_n
	\end{pmatrix}.\label{eq32}
\end{eqnarray*}

Hence, IP$(\rho,\phi)$ is a generalized $n$-fold IP. Using the algorithm of Cslovjecsek et al.~\cite{cslovjecsek2021block}, it can be solved in time $2^{\OO(s^2_As_D)}(s_Ds_A\Delta)^{\OO(s_A^2+s_As_D^2)} (nt_A)^{1+o(1)}$. 

The number of augmenting steps can be bounded by $\OO(n^{2+o(1)})$~\cite{de2012algebraic}, and in each augmentation we have to solve IP$(\rho,\phi)$ for all $\rho$ and $\phi$.
Thus the overall running time is 
$$ 2^{2^{\OO(t_A\log\Delta+s_Dt_D\log\Delta)}\cdot 2^{2^{\OO(t_B^2\log\Delta)}}}\cdot n^{5+o(1)}=\OFPT(n^{5+o(1)}).$$
\end{proof}

Using the same argument but replacing $g_{\infty}(H_{\textnormal{com}})$ with $g_{\infty}(\tilde{H}_{\textnormal{com}})$, we have the following:
\begin{corollary}\label{coro:linear}
Almost combinatorial 4-block $n$-fold IP with a linear objective function $f(\vex)=\vew\vex$ can be solved in time: $$ 2^{2^{\OO(s_At_A\log\Delta+s_Dt_D\log\Delta)}\cdot 2^{2^{\OO(t_B^2\log\Delta)}}}\cdot n^{5+o(1)}=\OFPT(n^{5+o(1)}).$$
\end{corollary}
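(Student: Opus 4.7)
The plan is to mirror the proof of Theorem~\ref{thm:linear} almost verbatim, making only the minor adjustments dictated by the rank-$1$ structure of $B$. First, using the normalization invoked in the proof of Theorem~\ref{almost-11-n}, I would assume without loss of generality that $B=(\ver_1,\ve 0,\ldots,\ve 0)^{\top}$ with $\ver_1\neq \ve 0$. Then for any $\veg\in\ker_{\Z}(\tilde{H}_{\textnormal{com}})$, the vector $B\veg^0$ has at most one nonzero coordinate, namely the integer scalar $\ver_1^{\top}\veg^0$. By the version of Theorem~\ref{coro:graver} stated in the remark for $\tilde{H}_{\textnormal{com}}$, this scalar is bounded in absolute value by $t_B\Delta\cdot g_{\infty}(\tilde{H}_{\textnormal{com}})=\OFPT(n)$.

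Next, I would run the iterative augmentation framework with the approximate Graver-best oracle of Altmanov{\'a} et al.~\cite{altmanova2019evaluating}. For each scale $\rho=2^i$ with $i\le h=\OO(n^{1+o(1)}\log\Delta)$ (applying the Tardos-style reduction of~\cite{koutecky2018parameterized} to shrink the variable domains) and each integer $\phi$ in the range $[-t_B\Delta g_{\infty}(\tilde{H}_{\textnormal{com}}),\, t_B\Delta g_{\infty}(\tilde{H}_{\textnormal{com}})]$, I would solve
\[
\min\{\vew\cdot\vey:\tilde{H}_{\textnormal{com}}\vey=\ve 0,\ \vel\le\vex_0+\rho\vey\le\veu,\ \ver_1^{\top}\vey^0=\phi,\ \vey\in\Z^{t_B+nt_A}\}.
\]
Fixing the single scalar $\ver_1^{\top}\vey^0=\phi$ completely determines $B\vey^0=(\phi,0,\ldots,0)^{\top}$, because the remaining rows of $B$ are zero. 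Treating $\vey^0$ as a ``zeroth brick'' whose local constraint is the scalar equation $\ver_1^{\top}\vey^0=\phi$ and whose global contribution is $C\vey^0$, the constraint matrix of this subproblem becomes a generalized $n$-fold matrix with $n+1$ bricks, solvable in time $\OFPT((nt_A)^{1+o(1)})$ by the algorithm of Cslovjecsek et al.~\cite{cslovjecsek2021block}.

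Finally, I would assemble the running time. Taking the best solution among all enumerated pairs $(\rho,\phi)$ yields a valid approximate Graver-best step, because every Graver basis element $\veg$ satisfies $\ver_1^{\top}\veg^0=\phi$ for some enumerated $\phi$, so its $\rho$-scaled augmentation is dominated by the corresponding optimum. With $\OO(n^{2+o(1)})$ augmentation rounds (as in~\cite{de2012algebraic}), $\OFPT(n^{1+o(1)})\cdot\OFPT(n)$ enumerated pairs per round, and $\OFPT(n^{1+o(1)})$ time per subproblem, the total cost is $\OFPT(n^{5+o(1)})$, matching the bound in Theorem~\ref{thm:linear}. I do not anticipate any real obstacle: the only substantive point beyond Theorem~\ref{thm:linear} is the observation that the one-dimensional enumeration of $\ver_1^{\top}\vey^0$ already captures every possible value of $B\vey^0$, which is immediate from the canonical form of $B$.
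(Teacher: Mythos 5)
Your proposal is correct and follows the same route the paper takes: the paper's own argument is a one-line reduction to the proof of Theorem~\ref{thm:linear} with $g_{\infty}(H_{\textnormal{com}})$ replaced by $g_{\infty}(\tilde{H}_{\textnormal{com}})$, and you supply exactly the missing details -- normalizing $B$ to $(\ver_1,\ve 0,\ldots,\ve 0)^{\top}$, observing that fixing the single scalar $\ver_1^{\top}\vey^0=\phi$ determines $B\vey^0$ entirely, and checking that the resulting sub-IP is a generalized $n$-fold solvable by Cslovjecsek et al. The enumeration count, per-subproblem cost, and augmentation-round bound all match, yielding $\OFPT(n^{5+o(1)})$ as claimed.
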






\subsection{Separable Convex Objective Functions}\label{separable}
We consider a separable convex objective function. A convex function $f: \mathbb{R}^{t_B+nt_A}\rightarrow \mathbb{R}$ is called separable if there are convex functions $f^i_j: \mathbb{R}\rightarrow \mathbb{R}$ such that $f(\ve x)=\sum_{i=0}^n f^i(\ve x^i)=\sum_{j=1}^{t_B} f^0_j(x^0_j)+\sum_{i=1}^n\sum_{j=1}^{t_A} f^i_j( x^i_j)$. Henceforth, we consider the problem 
\begin{eqnarray}\label{ILP_np2}
\min\{ f(\ve x): H_{\textnormal{com}} \vex=\veb, \vel\le \vex\le \veu, \vex\in \Z^{t_B+nt_A} \},
\end{eqnarray}

We assume that the objective function $f$ is presented by an \emph{evaluation oracle} that, when queried on a vector $\ve x$, returns the values $f^i(\ve x^i)$ for all $i = 0,1,\ldots,n$. The time complexity now measures the number of arithmetic operations and oracle queries.


\begin{T2}
 Consider combinatorial 4-block $n$-fold IP with a separable convex objective function $f$ mapping $\Z^{t_B+nt_A}$ to $\Z$. Let $P$ be the set of feasible integral points, and let $\hat{f}:=\max_{x,y\in P}(f(x)-f(y))$. Then~\eqref{ILP_np2} can be solved in $\OFPT(n^4\hat{L}^2 \log^2(\hat{f}))$ time. 
 More specifically, the running time is 
$$2^{2^{\OO(t_A\log\Delta+s_Dt_D\log\Delta)}\cdot 2^{2^{\OO(t_B^2\log\Delta)}}}\cdot n^{4}\hat{L}^2\log^2(\hat{f}),$$ where $\hat{L}$ denotes the logarithm of the largest number occurring in the input.
\end{T2}
\begin{proof}
We use a similar idea as Theorem~\ref{thm:linear}. It has been shown by Eisenbrand et al.~\cite{eisenbrand2018fastera} that for a separable convex function, it is still sufficient that for every $\rho=2^0,2^1,2^2,\cdots,2^{\OO(\log \|\veu-\vel\|_{\infty}))}$, we find out an augmentation of the form $\rho\vey$ which is no worse than $\rho\veg$ for any Graver basis element $\veg$. Hence, similarly, for every $\rho$ and every $\phi\in [-t_B\Delta g_{\infty}(H_{\textnormal{com}}): t_B\Delta g_{\infty}(H_{\textnormal{com}})]$, we solve the following:
\begin{eqnarray*}
 	\min\{f(\vex_0+\rho \vey)-f(\vex_0): H_{\textnormal{com}} \vey=\ve0, \vel\le \vex_0+\rho \vey \le \veu, B\vey^0=\phi, \vey\in \Z^{t_B+nt_A}\}.
 \end{eqnarray*}
The above is a generalized $n$-fold IP with a separable convex objective function, which can be solved in time of $n^2t^2\hat{L} \log(\hat{f}) (s_D\Delta)^{\OO(s_D^2)}$~\cite{eisenbrand2018fastera}, where $t=\max\{t_A,t_B\}$. 

The number of augmenting steps can be bounded by $(2n-2)\log(\hat{f})$~\cite{de2012algebraic}. Hence, the overall running time is 
$$ 2^{2^{\OO(t_A\log\Delta+s_Dt_D\log\Delta)}\cdot 2^{2^{\OO(t_B^2\log\Delta)}}}\cdot n^{4}\hat{L}^2\log^2(\hat{f})=\OFPT(n^{4}\hat{L}^2\log^2(\hat{f})).$$
\end{proof}
 
 Using the same argument but replacing $g_{\infty}(H_{\textnormal{com}})$ with $g_{\infty}(\tilde{H}_{\textnormal{com}})$, we have the following:

\begin{corollary}
\label{coro:convex}
Consider almost combinatorial 4-block $n$-fold IP with a separable convex objective function $f$ mapping $\Z^{t_B+nt_A}$ to $\Z$. Let $P$ be the set of feasible integral points for~\eqref{ILP_np2}, and let $\hat{f}:=\max_{x,y\in P}(f(x)-f(y))$. Then~\eqref{ILP_np2} can be solved in $\OFPT(n^{4}\hat{L}^2 \log^2(\hat{f}))$ time. 
 More specifically, the running time is 
$$2^{2^{\OO(s_At_A\log\Delta+s_Dt_D\log\Delta)}\cdot 2^{2^{\OO(t_B^2\log\Delta)}}}\cdot n^{4}\hat{L}^2\log^2(\hat{f}),$$ where $\hat{L}$ denotes the logarithm of the largest number occurring in the input.
\end{corollary}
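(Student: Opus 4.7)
The plan is to adapt the proof of Theorem~\ref{thm20} essentially verbatim, substituting the Graver bound $g_\infty(H_{\textnormal{com}})$ by its almost-combinatorial counterpart $g_\infty(\tilde{H}_{\textnormal{com}})$ supplied by the remark immediately following Theorem~\ref{coro:graver}. Recall that because $\text{rank}(B)=1$, the normalization performed in Appendix~\ref{appsec:almost} lets us assume $B=(\ver_1,\ve 0,\ldots,\ve 0)^\top$, so $B\vey^0$ is always of the form $(c,\ve 0,\ldots,\ve 0)^\top$ for some scalar $c\in\Z$. This single-scalar structure, already exploited in the proof of the extended Theorem~\ref{almost-11-n}, is what keeps the enumeration of the ``bridging variable'' a one-dimensional enumeration, exactly as in the $s_B=1$ case.

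First I would invoke the Graver-best augmentation procedure (Lemma~\ref{lemma21}), reducing the task to finding a Graver-best step in at most $(2n-2)\log\hat{f}$ iterations. Then, following the approximate-Graver-oracle idea used for Theorem~\ref{thm20}, for each dyadic step length $\rho=2^0,2^1,\ldots,2^{\OO(\hat{L})}$ I would enumerate the scalar $c$ over the range $|c|\le t_B\Delta\,g_\infty(\tilde{H}_{\textnormal{com}})=\OFPT(n)$ and, for each pair $(\rho,c)$, solve
\begin{equation*}
\min\{\,f(\vex_0+\rho\vey)-f(\vex_0)\;:\;\tilde{H}_{\textnormal{com}}\vey=\ve 0,\;\vel\le \vex_0+\rho\vey\le \veu,\;B\vey^0=(c,\ve 0,\ldots,\ve 0)^\top\,\}.
\end{equation*}
Once $B\vey^0$ is pinned, the two-stage constraints $B\vey^0+A_i\vey^i=\ve 0$ decouple into independent per-brick constraints $A_i\vey^i=-(c,\ve 0,\ldots,\ve 0)^\top$, so the reduced IP has the structure of a generalized $n$-fold IP with separable convex objective, which can be solved by the algorithm of Eisenbrand et al.~\cite{eisenbrand2018fastera} in time $n^2t^2\hat{L}\log(\hat{f})(s_D\Delta)^{\OO(s_D^2)}$ with $t=\max\{t_A,t_B\}$.

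Multiplying the three contributions — $\OO(n\log\hat{f})$ augmenting iterations, $\OO(\hat{L})\cdot\OFPT(n)$ choices of $(\rho,c)$ per iteration, and the per-subproblem running time — delivers the claimed $\OFPT(n^4\hat{L}^2\log^2\hat{f})$ bound; tracking the multiplicative FPT factor through $g_\infty(\tilde{H}_{\textnormal{com}})$ (which differs from $g_\infty(H_{\textnormal{com}})$ only in that $t_A$ gets replaced by $s_At_A$ inside the inner exponent) yields precisely $2^{2^{\OO(s_At_A\log\Delta+s_Dt_D\log\Delta)}\cdot 2^{2^{\OO(t_B^2\log\Delta)}}}$ as the leading constant. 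The main point to verify carefully — and essentially the only non-bookkeeping step — is that after fixing $B\vey^0=(c,\ve 0,\ldots,\ve 0)^\top$, the resulting constraint system genuinely admits a generalized $n$-fold encoding whose parameters $s_A,s_D,t_A,t_B,t_D,\Delta$ remain inside $\OFPT(1)$; once this is confirmed, the entire argument of Theorem~\ref{thm20} transfers unchanged.
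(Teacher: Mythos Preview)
Your proposal is correct and follows exactly the approach the paper takes: the paper's own proof of this corollary is the single sentence ``using the same argument but replacing $g_{\infty}(H_{\textnormal{com}})$ with $g_{\infty}(\tilde{H}_{\textnormal{com}})$,'' and you have faithfully unfolded that argument, including the rank-$1$ normalization of $B$ that reduces the enumeration of $B\vey^0$ to a single scalar. Your cost accounting matches the paper's line by line.
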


\section{Omitted contents in Section~\ref{appli}}\label{appsec:application}
Scheduling is a fundamental problem in operations research and computer science. The classical scheduling problem as well as its generalizations have been studied extensively in the literature. In particular, {approximation algorithms have been developed  for scheduling with rejection cost (see, e.g.,~\cite{engels2003techniques,hoogeveen2003preemptive,sviridenko2013approximating})}, and scheduling with the bicriteria of makespan and (weighted) total completion time (see, e.g.,~\cite{allahverdi2008two,allahverdi2002no,cheng2015two,xiong2014meta}.
In recent years, FPT algorithms have been developed for the classical scheduling problems~\cite{knop2018scheduling,mnich2015scheduling}. However, not much is known regarding how these algorithms can be generalized to deal with more sophisticated scheduling models. In particular, FPT algorithms have been developed for single machine scheduling with rejection cost~\cite{mnich2015scheduling}, while FPT algorithms for parallel machines are still unknown. FPT algorithms for bicriteria scheduling are also unknown. 

In this section, we show that combinatorial 4-block $n$-fold IP offers a strong tool for dealing with these generalizations on the classical scheduling problems.

\subsection{Scheduling with rejection}\label{sche1}
We restate our problem $R||C_{\max}+E$ here. Given are $m$ machines and $k$ different types of jobs, with $N_j$ jobs of type $j$. A job of type $j$ has a processing time of $p^i_j\in\Z_{\ge 0}$ if it is processed by machine~$i$. Every job of type $j$ also has a rejection cost $u_j$. A job is either processed on one of the machine, or is rejected. The goal is to minimize the makespan $C_{\max}$ plus the total rejection cost $E$, where makespan denotes the largest job completion.

FPT algorithms for scheduling with rejection has been considered by Mnich and Wiese~\cite{mnich2015scheduling}. However, they considered single machine scheduling with rejection. We are not aware of FPT algorithms for parallel machine scheduling with job rejection cost.


The goal of this subsection is to prove the following.
\begin{T9}
$R||C_{\max}+E$ can be solved in
 $m^{5+o(1)} 2^{2^{\OO(k^2\log p_{\max})}\cdot 2^{2^{\OO(\log p_{\max})}}}+|I|$ time, where $|I|$ denotes the length of the input.
\end{T9}
\begin{proof}
We model the scheduling problem with rejection cost as a combinatorial 4-block $n$-fold IP to solve it. 
Let $x^i_j\in\Z_{\ge0}$ denote the total number of jobs of type $j$ assigned to machine~$i$ in a schedule, and $C_{\max}$ be the makespan. Then we have the following IP$_{\textnormal{sche1}}$: 
	\begin{eqnarray}
	&\min{ }& C_{\max}+\sum_{j=1}^{k}u_j(N_j-\sum_{i=1}^{m}x^i_j)\nonumber\\
	&&\sum_{i=1}^{m}x^i_j\le N_j,\hspace{3.7cm} \forall 1\le j\le k\label{eq40}\\
	&& \sum_{j=1}^{k}p^i_jx^i_j-C_{\max}\le 0,\quad \hspace{2.1cm}\forall 1\le i\le m\label{eq41}\\
	&& x^i_j\in\Z_{\ge 0} \nonumber
\end{eqnarray}
Here Constraint~\eqref{eq40} indicates that the total number of type-$j$ jobs being processed is at most $N_j$. Constraint~\eqref{eq41} indicates that the total job processing time on every machine is bounded by the makespan $C_{\max}$.
Let all variables be ordered as a vector $\vex=(\vex^1_{ \cdot},\vex^2_{\cdot},\ldots,\vex^m_{\cdot})$, where $\vex^i_{\cdot}=(x^i_{1},x^i_{2},\ldots,x^i_{k})$. It is easy to see that IP$_{\textnormal{sche1}}$ has the following constraint matrix:
	$$	H_{(3)}=
\begin{pmatrix}
		0 &	I & I & I & \cdots & I \\
		-1 &	\vep^1_{\cdot} & 0 & 0  & \cdots  & 0  \\
		-1 &	0 & \vep^2_{\cdot}  &0 &  \cdots & 0  \\
		-1 &	0 & 0  &\vep^3_{\cdot} &  \cdots & 0  \\
		\vdots &	\vdots &  \vdots & \vdots  & \ddots & \vdots  \\
		-1 &	0 & 0  & 0  & \cdots   &\vep^m_{\cdot}
\end{pmatrix},
$$
where $\vep^i_{\cdot}=(p^i_{1},p^i_{2},\ldots,p^i_{k})$. Hence, IP$_{\textnormal{sche1}}$ is a combinatorial $4$-block $n$-fold IP with a linear objective function. IP$_{\textnormal{sche1}}$ can be solved in $m^{5+o(1)} 2^{2^{\OO(k^2\log p_{\max})}\cdot 2^{2^{\OO(\log p_{\max})}}}+|I|$ time by using Theorem~\ref{thm:linear}, where $|I|$ denotes the input size of the given problem. More precisely, $|I|$ is bounded by $\OO(kp_{\max} (\max\{\log N_{\max}, \log u_{\max}\}))$ where $N_{\max}=\max_j N_j$ and $u_{\max}=\max_j u_j$. 
\end{proof}

\begin{remark*}
One may suspect that IP$_{\textnormal{sche1}}$ can be solved through the generalized $n$-fold IP by guessing out the value of $C_{\max}$. However, this will require $p_{\max}\cdot \max_jN_j$ enumerations. 
\end{remark*}


\subsection{Scheduling with the objective of minimizing weighted completion time plus makespan}\label{sche2}
We restate our problem $R||\theta C_{\max}+\sum_{\ell}w_\ell C_\ell$ here. Given are $m$ machines and $k$ different types of jobs, with $N_j$ jobs of type $j$. A job of type $j$ has a processing time of $p^i_j\in\Z_{\ge 0}$ if it is processed by machine~$i$.
Each job $\ell$ of type $j$ also has a weight $w_j$, and the goal is to find an assignment of jobs to machines such that $\theta C_{\max}+\sum_{\ell}w_\ell C_\ell$ is minimized, where $C_\ell$ is the completion time of job $\ell$, $C_{\max}$ is the largest job processing time, and $\theta$ is a fixed input value.

FPT algorithms for $R||C_{\max}$ and $R||\sum_{\ell}w_\ell C_\ell$ have been developed by Knop and Kouteck{\`y}~\cite{knop2018scheduling}. However, their technique does not generalize to bicriteria as the natural IP formulation becomes 4-block $n$-fold, as we will show below.

The goal of this subsection is to prove the following.
\begin{T10}
 $R||\theta C_{\max}+\sum_{\ell}w_\ell C_\ell$ can be solved in
 $m^4 2^{2^{\OO(k^2\log p_{\max})}\cdot 2^{2^{\OO(\log p_{\max})}}}|I|^4$ time, where $|I|$ denotes the length of the input.
\end{T10}
\begin{proof}

Again we model the scheduling problem with combinatorial 4-block $n$-fold IP. Towards this,  we need to transform the objective function to a separable convex function. Such a transformation has been achieved by Knop and Kouteck{\`y}~\cite{knop2018scheduling}. For completeness of the paper, we briefly recap their transformation here.  

Consider jobs scheduled on each machine~$i$.  Assume a set of jobs $J^i:=\{J_1,\ldots,J_h\}$ will be scheduled on the machine~$i$ such that $\delta_i(q)\ge \delta_i(q+1)$ for all $1\le q\le h-1$, where $\delta_i(q):=w_q/p^i_q$. We denote $\delta_i(h+1)=0$. It is clear that these jobs will be scheduled according to the Smith rule, and thus in the sequence of $J_1, J_2,\cdots, J_h$. Denote by $C^i_q$ the completion time of job $J_q$ on this machine~$i$. The following observation has been made in Lemma 2 of~\cite{knop2018scheduling}, $$\sum_{q=1}^{h}w_q C^i_q=\sum_{q=1}^{h}[\frac{1}{2}p^i(\{J_1,\ldots,J_q\})^2(\delta_i(q)- \delta_i(q+1))+\frac{1}{2}w_q p^i_q],$$ where $p^i(S)=\sum_{J_q\in S}p^i_q$. 

Now we are ready to set up an IP. We use $x^i_j$ to represent the number of jobs of type $j$ $(1\le j\le k)$ that are scheduled on machine~$i$ $(1\le i\le m)$, then the following holds:

\begin{lemma}[\cite{knop2018scheduling}, Corollary 1]
Given $x^i_1,\ldots,x^i_k$ representing the number of jobs of each type scheduled to run on machine~$i$, a permutation $\pi_i:[k]\rightarrow [k]$ such that $\delta_i(\pi_i(j) )\ge \delta_i(\pi_i(j+1))$ for all $1\le j\le k-1$ and $\delta_i(\pi_i(k+1))=0$, then $f^i(\ve x^i)=\frac{1}{2}\sum_{j=1}^{k}[(\sum_{h=1}^jp^i_hx^i_h)^2(\delta_i(\pi_i(j))- \delta_i(\pi_i(j+1)))+w_jp^i_jx^i_j]$. 
\end{lemma}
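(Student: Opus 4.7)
The plan is to derive the lemma from the per-job identity $\sum_{q=1}^{n_i} w_q C^i_q = \tfrac{1}{2}\sum_{q=1}^{n_i}\bigl[p^i(\{J_1,\ldots,J_q\})^2(\delta_i(q)-\delta_i(q+1))+w_q p^i_q\bigr]$ stated just above the lemma, by aggregating individual jobs into type blocks. By relabeling types I may assume without loss of generality that $\pi_i$ is the identity permutation, so that $\delta_i(1)\ge\delta_i(2)\ge\cdots\ge\delta_i(k)$ and Smith's rule schedules all $x^i_1$ jobs of type~$1$ first, then all $x^i_2$ jobs of type~$2$, and so on. Within a type block, all jobs share the same $w$, $p^i$, and $\delta_i$ value, so their relative ordering is immaterial.

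Under this ordering, I would enumerate the individual jobs $J_1,\ldots,J_{n_i}$ in Smith order (where $n_i=\sum_j x^i_j$ is the total number of jobs on machine~$i$) and exploit the following telescoping phenomenon: for any two consecutive jobs $J_q,J_{q+1}$ of the same type~$j$ we have $\delta_i(q)-\delta_i(q+1)=0$, so those terms vanish from the first sum in the per-job identity; only the \emph{last} job of each type-$j$ block survives, with difference $\delta_i(j)-\delta_i(j+1)$ and partial-processing-time equal to $P_j:=\sum_{h=1}^j p^i_h x^i_h$. This collapses the first sum exactly to $\sum_{j=1}^k P_j^2\bigl(\delta_i(j)-\delta_i(j+1)\bigr)$.

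The second sum $\sum_{q=1}^{n_i} w_q p^i_q$ aggregates immediately into $\sum_{j=1}^k w_j p^i_j x^i_j$, since each of the $x^i_j$ jobs of type~$j$ contributes identically. Putting the two pieces together and multiplying by $\tfrac{1}{2}$ yields precisely the announced expression for $f^i(\vex^i)$. The main technical point that needs careful verification is the telescoping step, namely that within each Smith-ordered type block the $\delta_i$ differences cancel out and only the boundary between consecutive type blocks survives; once this is checked, the remaining manipulations are direct substitution, so I do not anticipate any serious obstacle given that the per-job identity is already granted.
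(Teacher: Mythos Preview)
Your proposal is correct. The paper does not supply its own proof of this lemma: it is quoted verbatim as Corollary~1 of Knop and Kouteck\'y, immediately after the per-job identity (their Lemma~2) that you take as your starting point. Your telescoping argument---collapsing consecutive jobs of the same type because their $\delta_i$-differences vanish, so that only the block boundaries survive with prefix sum $P_j=\sum_{h\le j}p^i_h x^i_h$---is exactly the natural passage from that per-job identity to the type-aggregated formula, and it goes through as you describe. One small point worth making explicit when you write it out: if some $x^i_j=0$, there is no last job of that block, but then $P_j=P_{j-1}$ and the type-level terms for the empty blocks telescope among themselves, so the formula still matches the per-job sum.
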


We introduce new variables as $z_j^i:=\sum_{h=1}^jp^i_hx^i_h$, then the objective function can be written as $\theta C_{\max}+\sum_{i=1}^{m}f^i(\ve x^i,\ve z^i)$, where $f^i(\ve x^i,\ve z^i)=\frac{1}{2}\sum_{j=1}^{k}[(z_j^i)^2(\delta_i(\pi_i(j))- \delta_i(\pi_i(j+1)))+w_jp^i_jx^i_j]$. 
Note that $f^i(\ve x^i,\ve z^i)$ is separable convex for any $i$ $(1\le i\le m)$~\cite{knop2018scheduling}. 

To summarize, we have the following IP$_{\textnormal{sche2}}$: 
\begin{eqnarray*}
&\min{ }& \theta C_{\max}+ \frac{1}{2}\sum_{i=1}^{m}\sum_{j=1}^{k}[(z_j^i)^2(\delta_i(\pi_i(j))- \delta_i(\pi_i(j+1)))+w_jp^i_jx^i_j]\\
	&&\sum_{i=1}^{m}x^i_j= N_j,\hspace{3.7cm} \forall 1\le j\le k\\
	&& \sum_{j=1}^{k}p^i_jx^i_j-C_{\max}\le 0,\quad \hspace{2.1cm}\forall 1\le i\le m\\
		&& \sum_{h=1}^jp^i_hx^i_h=z_j^i,\quad \hspace{3.1cm}\forall 1\le i\le m, 1\le j\le k\\
	&& x^i_j\in\Z_{\ge 0} 
\end{eqnarray*}
It is easy to verify that the constraint matrix is as follows:
\begin{eqnarray*}
H_{(4)}=
\begin{pmatrix}
0 & D_1 & D_2 & \cdots & D_m \\
B & A_1 & 0  &   & 0  \\
B & 0  & A_2 &   & 0  \\
\vdots &   &   & \ddots &   \\
B & 0  & 0  &   & A_m
\end{pmatrix},
\end{eqnarray*}
where
\begin{eqnarray*}
D_i=
\begin{pmatrix}
1 & 0 &  \cdots & 0  & 0 & 0 & \cdots & 0 \\
0 & 1 &   &    0     &  0 & 0 & \cdots & 0 \\
\vdots &   & \ddots &&  \vdots & \vdots & \ddots & \vdots  \\
0 &  0 &   & 1       &  0 & 0 & \cdots & 0 
\end{pmatrix},
&
A_i=
\begin{pmatrix}
p^i_1 & p^i_2 &  \cdots & p^i_k  & 0 & 0 & \cdots & 0 \\
p^i_1 & 0 &  \cdots &    0     & -1 & 0 & \cdots & 0 \\
p^i_1 &  p^i_2  &  &0 &  0 & -1 &  & 0  \\
\vdots &  \vdots &   &        &  \vdots &  & \ddots &  \\
p^i_1 & p^i_2 &  \cdots & p^i_k  & 0 & 0 &  & -1 
\end{pmatrix}
\end{eqnarray*}
and $B=(-1,\underbrace{0,\ldots,0}_{k})^{\top}$. 

This is an almost combinatorial 4-block $n$-fold IP. Using 
 Theorem~\ref{thm20}, the above IP can be solved in time $m^4 2^{2^{\OO(k^2\log p_{\max})}\cdot 2^{2^{\OO(\log p_{\max})}}}|I|^4$, where $|I|$ denotes the length of the input, which is bounded by $\OO(kp_{\max} (\max\{\log N_{\max},\log w_{\max}\}))$ where $N_{\max}=\max_j N_j$, $w_{\max}=\max_j w_j$.
 \end{proof}



\end{document}